\bmdefine{\br}{r}
\bmdefine{\bu}{u}
\newcommand{\buzero}{\bu_0}
\newcommand{\but}{\bu_t}
\bmdefine{\btildeX}{\tilde{X}}
\bmdefine{\btildeY}{\tilde{Y}}
\bmdefine{\btildeB}{\tilde{B}}
\bmdefine{\bX}{X}
\bmdefine{\bY}{Y}
\bmdefine{\bB}{B}
\bmdefine{\bv}{v}
\bmdefine{\bz}{z}
\bmdefine{\bw}{w}
\bmdefine{\bq}{q}
\bmdefine{\bc}{c}
\bmdefine{\bcprime}{c'}
\bmdefine{\brdot}{\dot{r}}
\bmdefine{\blambda}{\lambda}
\newtheorem{introTheorem}{Theorem}
\newtheorem{introProposition}[introTheorem]{Proposition}
\newtheorem{theorem}{Theorem}[section]
\newtheorem{lemma}[theorem]{Lemma}
\newtheorem{proposition}[theorem]{Proposition}
\newtheorem{corollary}[theorem]{Corollary}
\theoremstyle{definition}
\newtheorem{remark}[theorem]{Remark}
\numberwithin{equation}{section}
\newcommand*{\R}{\mathbb{R}}
\newcommand*{\integerinterval}[2]{\{ #1, \dots, #2 \}}
\newcommand*{\sphere}[1]{S^{#1}}
\newcommand*{\torus}[1]{\mathbb{T}^{#1}}
\newcommand*{\vectprod}{\times}
\newcommand*{\isomorphic}{\simeq}
\newcommand*{\set}[1]{\left\lbrace #1 \right\rbrace}
\newcommand*{\abs}[1]{|#1|}
\newcommand*{\identity}[1]{\mathrm{id}_{#1}}
\newcommand*{\diff}[1]{\mathrm{d}{#1}}
\newcommand*{\scalarproduct}[2]{\langle #1 , #2 \rangle}
\newcommand*{\norm}[1]{\left\lVert #1 \right\rVert}
\newcommand*{\Poissonbracket}[2]{\{ #1, #2 \}}
\newcommand*{\kronecker}[2]{\delta_{#1, #2}}
\DeclareMathOperator{\card}{card}
\newcommand*{\textd}{\mathrm{d}}
\newcommand*{\textnd}{\mathrm{nd}}
\newcommand*{\strat}{\mathrm{strat}}
\newcommand*{\polspace}[1]{\tilde{\mathcal{M}}_{#1}}
\newcommand*{\degpolspace}[1]{\polspace{\br}^{\mathrm{d}}}
\newcommand*{\ndegpolspace}[1]{\polspace{\br}^{\mathrm{nd}}}
\newcommand*{\confspace}[1]{\mathcal{M}_{#1}}
\newcommand*{\degconfspace}[1]{\confspace{\br}^{\mathrm{d}}}
\newcommand*{\ndegconfspace}[1]{\confspace{\br}^{\mathrm{nd}}}
\newcommand*{\class}[1]{\lbrack #1 \rbrack}
\newcommand*{\momentummap}{\mu}
\newcommand*{\diagonal}[1]{\mu_{#1}}
\newcommand*{\length}[1]{\ell_{#1}}
\newcommand*{\dihedralangle}[1]{\theta_{#1}}
\newcommand*{\face}[1]{\Delta_{#1}}
\newcommand*{\integralsmap}{F}
\newcommand*{\bendingfield}[1]{\bX_{#1}}
\newcommand*{\normalizedbendingfield}[1]{\bB_{#1}}
\newcommand*{\SO}[1]{SO(#1)}
\newcommand*{\horizontaltangent}[2]{T^{\text{hor}}_{#1}#2}
\newcommand*{\orbit}[1]{\mathcal{O}(#1)}
\newcommand*{\smooth}{\mathcal{C}^{\infty}}
\newcommand*{\stratsmoothsections}{\Gamma_\strat^\infty}
\newcommand*{\C}{\mathbb{C}}
\newcommand*{\twoframes}[1]{V_2(\C^{#1})}
\newcommand*{\twograss}[1]{\mathrm{Gr}(2,#1)}
\newcommand*{\Hh}{\mathbb{H}}
\newcommand*{\proper}{\mathrm{proper}}
\DeclareMathOperator{\trace}{Trace}
\title[Singular fibers of the bending flows of 3D polygons]{Singular fibers of the bending flows on the moduli space of 3D polygons}
\author[Damien Bouloc]{Damien Bouloc}
\address{Institut de Math\'ematiques de Toulouse \\ 118, route de Narbonne \\ F-31062 Toulouse Cedex 9 }
\email{damien.bouloc@math.univ-toulouse.fr}
\date{September 26, 2016}
\begin{document}

\maketitle

\begin{abstract}
In this paper, we prove that in	the system of bending flows on the moduli space of polygons with fixed side lengths introduced by Kapovich and Millson, the singular fibers are isotropic homogeneous submanifolds. The proof covers the case where the system is defined by any maximal family of disjoint diagonals. We also take in account the case where the fixed side lengths are not generic. In this case, the phase space is an orbispace, and our result holds in the sense that singular fibers are isotropic orbispaces. In a last part we provide leads in favor of a similar study of the integrable systems defined by Nohara and Ueda on the Grassmannian of 2-planes in $\C^n$.
\end{abstract}

\setcounter{tocdepth}{1}
\tableofcontents

\section{Introduction}

In the theory of integrable Hamiltonian systems, singular fibers of the associated Lagrangian fibrations play a very important role. Indeed, according to the classical Liouville--Mineur theorem, each connected component of a compact regular level set of the momentum map is an invariant Lagrangian torus, called a Liouville torus, on which the system is quasi-periodic.  Moreover, near each Liouville torus there exists a system of action-angle variables in which the foliation by Liouville tori is trivial. But the geometry near singular fibers is not so simple in general, and yet it has to be studied in order to understand the local and global geometrical structure of the system. Of particular importance are the \emph{nondegenerate} singular fibers (those which satisfy some natural nondegeneracy conditions), because most singularities of well-known integrable Hamiltonian systems are of this kind. According to a result of Zung \cite{zung1996symplectic}, there is a topological description of nondegenerate singularities in terms of almost direct products of simplest (corank 1 elliptic, corank 1 hyperbolic and corank 2 focus--focus) singularities. Those singularities have been extensively studied, see e.g. \cite{babelon2015higher,bolsinov2004integrable,bolsinov2014singularities,pelayo2014semiclassical}.

On the other hand, degenerate singularities of integrable Hamiltonian systems can be much more complicated. In particular, degenerate singular fibers are not immersed submanifolds in general. However, there is a particular class of integrable Hamiltonian systems whose singular fibers, even the degenerate ones, still look very nice: they are all isotropic homogeneous submanifolds (or more generally isotropic orbispaces when the phase space itself is a symplectic orbispace). This class of singularities, that might be called \emph{spherical singularities}, is closely related to the so called toric degenerations in algebraic geometry (see e.g. \cite{harada2006symplectic,harada2012integrable}). The classical Gel'fand--Cetlin system introduced by Guillemin and Sternberg \cite{guillemin1983gelfand} is an example of integrable systems in this class. The proof that its singularities are spherical has been made by Alamidinne \cite{alamiddine2009GelfandCeitlin} for the Gel'fand--Cetlin system on $\mathfrak{su}(3)$, and then by Miranda and Zung \cite{mirandazung} for the case of $\mathfrak{su}(n)$.

In this paper, we study another family of integrable Hamiltonian systems with spherical singularities: the so called \emph{bending flows} introduced by Kapovich and Millson \cite{KapMil96} on the moduli space $\confspace{\br}$ of 3D polygons with fixed side lengths $\br=(r_1, \dots, r_n)$, which happens to be a manifold when $\br$ is generic. These moduli spaces of polygons and their bending systems have been studied from various points of views afterwards \cite{cdt2014probability,charles2010quantization,HausKnut97,hmm2011toric,kamiyama2002symplectic,nohara2014toric}. Our results here concern their singular fibers and state that the systems of bending flows on $\confspace{\br}$ are indeed examples of systems with spherical singularities: 
\begin{introTheorem}
	\label{intro:thmA}
	For $\br$ generic, the singular fibers of any system of bending flows on $\confspace{\br}$ are isotropic homogeneous submanifolds of the moduli space $\confspace{\br}$.
\end{introTheorem}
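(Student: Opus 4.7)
The plan is to show that each singular fiber $F_c$ is a single orbit of a subtorus of the bending torus action, the subtorus being obtained by quotienting out the linear dependencies produced by degenerate triangles of the triangulation. For generic $\br$ the diagonal lengths $\ell_j$ are everywhere strictly positive on $\confspace{\br}$, so the only singular points of $F = (\ell_1, \dots, \ell_{n-3})$ come from triangles that are forced to have collinear vertices on the fiber.

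I would begin by describing the image of $F$ as a convex polytope $P$ cut out by the triangle inequalities associated with the $n-2$ triangles of the triangulation. A codimension-$k$ face of $P$ corresponds to $k$ specific triangles being collinear on every point of the fiber above it, and a dimension count identifies $\dim F_c = n-3-k$. Next, for each degenerate triangle $T$ I would produce a linear dependency among the Hamiltonian vector fields of the bending Hamiltonians. The key observation is that when the three vertices of $T$ are collinear along a line $L_T$, every side of $T$ lies along $L_T$, and in particular the diagonals of $T$ all share the common rotation axis $L_T$. A global rotation about $L_T$ is trivial in $\confspace{\br}$ after the $\SO{3}$-quotient, so the bending flows about the diagonals of $T$ satisfy a nontrivial linear relation coming from this common rotation (in the boundary case where $T$ has only one diagonal side, the bending about that diagonal simply vanishes on $F_c$). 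Each degenerate triangle therefore cuts the rank of the effective bending torus on $F_c$ by one, leaving a subtorus of rank $n-3-k$.

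Combining these facts, the reduced subtorus has rank equal to $\dim F_c$ and so acts transitively on each connected component of $F_c$, making $F_c$ a disjoint union of torus orbits, hence homogeneous. Orbits of a Hamiltonian torus action are automatically isotropic via $\omega(\xi^*, \eta^*) = \langle \mu, [\xi, \eta] \rangle = 0$, so $F_c$ is an isotropic homogeneous submanifold. The main obstacle is verifying that $F_c$ is connected so that a single torus orbit fills it, and making the dependency count rigorous across all combinatorial types of degenerate triangles (interior with three diagonal sides, intermediate with two, boundary with one). A natural approach is induction on $k$: cut the polygon along a diagonal whose adjacent triangle is degenerate, reducing the problem to bending systems on sub-polygons of smaller moduli spaces, whose fibers are controlled either by the regular case or by a weaker instance of the same theorem.
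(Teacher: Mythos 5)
There is a genuine gap at the very first step: you assert that ``For generic $\br$ the diagonal lengths $\ell_j$ are everywhere strictly positive on $\confspace{\br}$.'' This is false. Genericity of $\br$ is the condition $\sum_i \varepsilon_i r_i \neq 0$ for all sign choices, which forbids \emph{degenerate (collinear) polygons}, but it does not forbid a diagonal $\diagonal{i,j}(\bu) = \sum_{k=i}^{j-1} r_k u^k$ from vanishing: that requires only that the sub-tuple $(r_i,\dots,r_{j-1})$ admit a closed sub-polygon, a condition that is easily compatible with genericity (e.g.\ $r_1=r_2=r_3=1$ with the remaining $r_k$ chosen irrational and incommensurable). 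So non-prodigal singular fibers exist even for generic $\br$, and your whole argument is built on excluding them.

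This matters because when a diagonal $d_\ell$ vanishes on a fiber, the polygon decomposes as a wedge sum of sub-polygons, and the fiber acquires $\SO{3}$-factors rather than only $\torus{1}$-factors: the paper's Theorem~\ref{t:singular_fibers_are_manifolds} identifies $N$ with a product of the form $\SO{3}^{p-1}\times\torus{q}\times(\sphere{2})^k$. Such a fiber is homogeneous but is \emph{not} a single orbit of a subtorus of the bending torus, so the rank-count argument (``cut the rank of the effective bending torus by one for each degenerate triangle'') does not give transitivity. Likewise, the isotropicness argument $\omega(\xi^*,\eta^*)=\langle\mu,[\xi,\eta]\rangle=0$ only covers torus orbits; once $\SO{3}$-factors appear, $[\xi,\eta]\neq 0$ and $\mu\neq 0$ for the sub-polygon $\SO{3}$-actions, so that shortcut is unavailable. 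The paper instead proves isotropicness by a perturbation/continuity argument (Lemmas~\ref{l:approx1} and~\ref{l:approx2}), approximating singular polygons by less-singular ones and passing to the limit in $\omega$. Your closing suggestion (inducting by cutting along degenerate faces or vanishing diagonals, reducing to smaller bending systems) is in the same spirit as the paper's decomposition in Section~\ref{s:structure_of_singular_fibers}, and is the right direction, but to be complete you would have to carry out both kinds of cuts---along degenerate \emph{faces} (giving the $S$ of equation~\eqref{eq:image_of_cutting}) and along \emph{vanishing diagonals} (giving the wedge-sum splitting)---and then replace the torus-orbit isotropicness argument with something that handles the $\SO{3}$- and $\sphere{2}$-factors.
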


Also, we do not limit ourselves to the case when the side lengths are generic. When those lengths are chosen in such a way that the configuration space fails to be a manifold, it is still possible to work in the category of orbispaces. Using the concepts of tangent space, vector fields and symplectic structure on an orbispace (see e.g. \cite{FOR09,Pflaum01,Pfl03}), we extend the definition of the considered Hamiltonian systems to the non-generic case.
\begin{introProposition}
	When $\br$ is not generic, the moduli space $\confspace{\br}$ is a symplectic orbispace, and the systems of bending flows still make sense.
\end{introProposition}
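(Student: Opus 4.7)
The plan is to realize $\confspace{\br}$ as the singular symplectic quotient of a smooth symplectic manifold by a proper Hamiltonian action and to invoke the machinery of singular symplectic reduction to equip the quotient with a symplectic orbispace structure; the bending flows will then descend from $\SO{3}$-equivariant Hamiltonian flows upstairs.

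First I would set up the ambient reduction. The product $M = (\sphere{2})^n$ carries the weighted product symplectic form $\omega = \sum_{i=1}^{n} r_i \omega_i$, where $\omega_i$ is the standard area form on the $i$-th factor, and $\SO{3}$ acts diagonally on $M$ in a Hamiltonian fashion with momentum map $\momentummap(\bu_1, \dots, \bu_n) = \sum_{i=1}^n r_i \bu_i$ under the identification $\mathfrak{so}(3)^{*} \simeq \R^{3}$. The definition $\confspace{\br} = \momentummap^{-1}(0) / \SO{3}$ is meaningful regardless of any genericity assumption on $\br$. A direct computation shows that $\momentummap$ is a submersion and the $\SO{3}$-action is free at a point of $\momentummap^{-1}(0)$ if and only if the configuration is not collinear; in the non-generic case the collinear configurations in $\momentummap^{-1}(0)$ form finitely many $\SO{3}$-orbits, each parametrized by a sign pattern $(\epsilon_1, \dots, \epsilon_n) \in \{\pm 1\}^n$ with $\sum_i \epsilon_i r_i = 0$, and each having isotropy the rotation subgroup $\SO{2} \subset \SO{3}$ fixing the common axis.

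I would then appeal to the singular symplectic reduction theorem of Sjamaar--Lerman, in the orbispace-theoretic form developed in \cite{Pflaum01, Pfl03, FOR09}. Compactness of $M$ and properness of the $\SO{3}$-action imply that $\confspace{\br}$ is a compact Hausdorff space with a canonical decomposition into smooth symplectic strata. Away from the images of the collinear orbits the quotient is a smooth symplectic manifold of dimension $2(n-3)$; near a collinear class the Marle--Guillemin--Sternberg slice theorem provides a symplectic slice $\Sigma$ on which the isotropy $\SO{2}$ acts linearly and symplectically, with a neighborhood of the class identified with the linear symplectic reduction of $\Sigma$ by this $\SO{2}$-action. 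Patching these charts together with the smooth chart on the principal stratum realizes $\confspace{\br}$ as a symplectic orbispace in the sense of the cited references.

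For the bending flows, each diagonal length is an $\SO{3}$-invariant smooth function on the open subset of $M$ where the corresponding partial sum is non-zero, and descends to a stratified-smooth function on the corresponding open subset of $\confspace{\br}$; its Hamiltonian vector field on $M$ is $\SO{3}$-equivariant and tangent to $\momentummap^{-1}(0)$, and therefore projects to a section of the stratified tangent bundle of the quotient. Involutivity of the bending Hamiltonians is inherited from the generic case by pointwise descent of the Poisson bracket of $\SO{3}$-invariant functions. The main technical point, which I expect to handle by direct inspection in the local slice model, is to verify that near a degenerate class the bending Hamiltonians express as $\SO{2}$-invariant smooth functions of the slice coordinates, so that their Hamiltonian flows on the slice are $\SO{2}$-equivariant and descend to genuine flows on the orbispace chart.
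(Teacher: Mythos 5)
Your proposal is correct, but it takes a genuinely different route from the paper's. The paper's construction works entirely from the level set $\polspace{\br} = \momentummap^{-1}(0)$: it declares $(\polspace{\br}, \SO{3}, \pi)$ to be a single global orbispace chart, invokes Pflaum's stratification by isotropy type (yielding the partition into nondegenerate and degenerate configurations), builds the stratified tangent orbibundle explicitly, and checks by hand that the bending vector fields $\btildeX_{i,j}$, being $\SO{3}$-invariant, give smooth stratified sections, so that $F=(F_1,\dots,F_{n-3})$ remains an integrable system on the open dense stratum. Your construction instead stays at the level of the ambient smooth manifold $(\sphere{2})^n$ and applies Sjamaar--Lerman singular reduction together with the Marle--Guillemin--Sternberg slice theorem, obtaining local orbispace charts from the $\SO{2}$-action on the symplectic slice at each collinear orbit. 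This has a concrete advantage the paper does not explicitly address: when $\br$ is non-generic, $0$ is a singular value of $\momentummap$ (at a collinear $\bu$ the image of $d\momentummap_{\bu}$ lies in the plane orthogonal to the common axis), so $\polspace{\br}$ has quadratic-cone singularities at the degenerate polygons rather than being a smooth manifold, and the cited Pflaum proposition on stratifications of $\tilde{M}/G$ requires $\tilde{M}$ smooth. Singular reduction from $(\sphere{2})^n$, as you propose, never treats $\polspace{\br}$ as a smooth chart domain. Beyond this, your observations that the bending Hamiltonians are $\SO{3}$-invariant, that their Hamiltonian fields are equivariant and tangent to the level set, and that involutivity is a pointwise identity on $(\sphere{2})^n$ which descends, match what the paper checks in its own framework; the extra step you flag of inspecting the slice model to confirm $\SO{2}$-invariance of the reduced Hamiltonians is sound but essentially automatic, since they are restrictions of $\SO{3}$-invariant functions upstairs.
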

The proof of Theorem~\ref{intro:thmA} in this paper actually includes the non-generic case, leading to the following more general result: 
\begin{introTheorem}
	Let $\br=(r_1, \dots, r_n)$ be any $n$-tuple of positive numbers. The singular fibers of systems of bending flows carry the same structure (manifold or orbispace) as the moduli space $\confspace{\br}$. Moreover, the symplectic structure defined on $\confspace{\br}$ vanishes on those singular fibers.
\end{introTheorem}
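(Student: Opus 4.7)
The plan is to realize every singular fiber of the momentum map $\integralsmap = (\length{1}, \dots, \length{n-3})$ as a single orbit of a compact connected Lie group $G$ acting on $\confspace{\br}$ (or, in the non-generic case, on its underlying orbispace). From such a description, the orbit--stabilizer theorem will exhibit the fiber as $G/H$ with $H$ the compact stabilizer of any reference point, hence a smooth homogeneous submanifold in the manifold case and a homogeneous suborbispace in the orbispace case. Isotropy will then follow from the observation that, point by point, every tangent vector to a $G$-orbit in the fiber is a Hamiltonian vector field of a function Poisson-commuting with all the $\length{j}$, so that $\omega$ annihilates any pair of such vectors.

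I would first fix the maximal family $\mathcal{D} = \set{d_1, \dots, d_{n-3}}$ of disjoint diagonals and decompose each polygon $P \in \confspace{\br}$ into the $n-2$ spatial triangles it determines. Since the isometry type of each triangle is fixed by the three lengths taken among $\br$ and the $\length{j}$, any polygon in the fiber $\integralsmap^{-1}(a_1, \dots, a_{n-3})$ is obtained from a reference one by adjusting the relative isometries gluing adjacent triangles. For an index $j$ with $a_j$ strictly inside the open interval allowed by the triangle inequalities of the two triangles meeting along $d_j$, this relative isometry is exactly the circle parameter of the bending $\sphere{1}$-action generated by $\length{j}$. For an index $j$ with $a_j$ at a boundary of that interval, the would-be dihedral angle is not defined: either an adjacent triangle degenerates to a segment or $d_j$ itself collapses to a point when $a_j = 0$, and the extra freedom in the gluing is of $\SO{2}$- or $\SO{3}$-type (rotation of one sub-chain about a line along which it has become collinear, or about the collapsed vertex, respectively).

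Let $G$ be the product, over $j \in \set{1,\dots,n-3}$, of the factor $\sphere{1}$, $\SO{2}$ or $\SO{3}$ dictated by the case above at the chosen fiber. Then $G$ acts on the fiber by simultaneously applying these relative rotations, preserves all the $\length{j}$, and by construction moves every degree of freedom described above; a dimension count, matched against the drop of rank of $d\integralsmap$ at the singular point, shows that $G$ is transitive on each connected component of the fiber. The $\sphere{1}$-generators are the Hamiltonian vector fields of the $\length{j}$, which mutually Poisson-commute. For the extra $\SO{2}$- and $\SO{3}$-generators, I would express each as a Hamiltonian vector field of a function in the Poisson algebra generated by the $\length{j}$, either as limits of bending vector fields attached to neighbouring values of $\length{j}$, or by using the components of the (vanishing) diagonal vector $d_j$ rather than only its norm $\length{j}$ as a moment map for the enlarged symmetry of a collapsed diagonal. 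This yields isotropy. In the non-generic setting, the reflection stabilizers of planar polygons produce exactly the orbifold behaviour inherited from $\confspace{\br}$.

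The main obstacle is the coherent identification of $G$ and the verification of transitivity when several diagonals degenerate simultaneously: one must check that the enlarged $\SO{3}$-factors attached to different degenerate diagonals combine into a bona fide global action on the fiber and not a mere collection of infinitesimal germs, and that each of the additional generators can indeed be expressed as a Hamiltonian vector field of a function lying in the Poisson-commuting family generated by the $\length{j}$. In the orbispace case all of this has to be phrased in the smooth orbispace category recalled in the introduction, but the manifold and orbispace arguments then proceed in parallel, stratum by stratum.
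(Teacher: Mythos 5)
Your global strategy is recognizably in the same spirit as the paper's --- realize the singular fiber as a homogeneous space under a compact group built from bending circles and extra rotational freedoms created by degeneracies, then obtain isotropy from Poisson-commutativity of the Hamiltonians generating the action. But the execution differs from the paper in two essential respects, and a few of your steps have real gaps.

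On the structure side, the paper deliberately does all the work on the \emph{lift} $\tilde N\subset\polspace{\br}$, which is a genuine manifold even when $\br$ is non-generic, and only afterwards quotients by $\SO3$. It also uses a two-stage decomposition: first cut at vanishing diagonals to write a polygon as a wedge sum of prodigal pieces, then inside a prodigal piece cut along a single degenerate adapted face; this produces the model $\SO3^p\times\torus{q}\times(\sphere2)^k$ of Proposition~\ref{p:structure_nonprodigal_fiber}. You instead propose to cut all the way down to the $n-2$ triangles and work directly on $\confspace{\br}$. This is attractive, but it creates two problems. First, your transitivity argument, ``a dimension count, matched against the drop of rank of $d\integralsmap$, shows that $G$ is transitive,'' is circular as stated: a dimension count can only identify the $G$-orbit with a connected component of $N$ once you already know $N$ is a manifold of that dimension, which is precisely what you are trying to establish. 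The paper avoids this by constructing an explicit bijection of $\tilde N$ onto the locus $S$ in \eqref{eq:image_of_cutting}, which is manifestly a manifold, and proving transitivity of the group action on $S$ by hand. Second, the group you want to act lives naturally on $\polspace{\br}$, not on the orbispace $\confspace{\br}$; you acknowledge this ("one must check that the enlarged $\SO3$-factors ... combine into a bona fide global action ... in the smooth orbispace category"), but this is the technical heart of the problem, not a loose end. Working on the lift and invoking the stratification of Proposition~\ref{p:stratification_confspace} is how the paper sidesteps it.

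On the isotropy side, the key sentence ``every tangent vector ... is a Hamiltonian vector field of a function Poisson-commuting with all the $\length{j}$, so that $\omega$ annihilates any pair of such vectors'' is not correct as written: commuting with the $\length{j}$ does not make two such functions commute with each other. What you actually need, and what your later sentence about using the components of the vector-valued diagonal $d_j$ as a moment map is groping towards, is that the full family of Hamiltonians spanning $T N$ pairwise Poisson-commutes \emph{on the fiber}. For a vanishing diagonal $d_j$ the extra generators come from the $\SO3$-action on one wedge-sum piece, whose moment map is $d_j$ itself; the bracket of two of its components is $\scalarproduct{d_j}{[v,w]}$, which vanishes only because $d_j=0$ on the fiber. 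That on-the-fiber vanishing has to be said explicitly. With that precision your route genuinely differs from the paper's: the paper proves isotropy by approximating polygons in the fiber by polygons in nearby, less singular fibers (Lemmas~\ref{l:approx1} and~\ref{l:approx2}) and passing to the limit in $\omega$, whereas you try to compute $\omega$ directly on the generating family of Lemma~\ref{l:generators_of_a_singular_fiber} using Poisson brackets. The direct argument is cleaner when it works, but you still need to check all mixed brackets $\set{\scalarproduct{\mu_j}{v},\length{k}}$ and $\set{\scalarproduct{\mu_j}{v},\scalarproduct{\mu_{j'}}{w}}$ vanish on the fiber, which ultimately rests on the disjointness of the diagonals and the nesting of the wedge-sum pieces; none of this is a triviality.

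Finally, the orbispace characterization is wrong as stated: the non-free locus is not ``planar polygons with reflection stabilizers'' but the set of degenerate (collinear) polygons, whose stabilizer is the $\SO2$ of rotations about the common axis (see Proposition~\ref{p:stratification_confspace}). Correspondingly, whether a singular fiber is a manifold or a bona fide orbispace is governed not by planarity but by whether all prodigal wedge-sum pieces are degenerate (the Type~I/Type~II dichotomy of Theorem~\ref{t:singular_fibers_are_manifolds}).
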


The organization of this paper is as follows. In \S\ref{s:geometry_of_polygons}, we recall the definition of the Hamiltonian system associated to a maximal family of disjoint diagonals on the configuration space of 3D polygons with fixed side lengths, and we describe its singularities.
In \S\ref{s:extension_non-generic}, we give more details about how these definitions extend to the non-generic case when one uses the notion of symplectic orbispace.
In \S\ref{s:structure_of_singular_fibers}, we show that the lifts of singular fibers in the space of polygons are manifolds. This allows us to prove that, after projection to the moduli space of polygons, a singular fiber belongs to the same category as the moduli space containing it (i.e. manifolds or orbispaces).
After that, we prove in \S\ref{s:isotropicness_of_fibers} that the singular fibers are isotropic.
Finally in \S\ref{s:relation_to_grassmannians}, we describe how the systems of bending flows on $\confspace{\br}$ relate to integrable systems on the Grassmannian $\twograss{n}$ defined by Nohara and Ueda~\cite{nohara2014toric} and to the Gel'fand--Cetlin system on $U(n)$. In particular we provide some arguments suggesting that the techniques employed in this paper would also apply to the integrable systems on $\twograss{n}$.

\subsection*{Acknowledgements}
The author would like to thank his advisor Nguyen Tien Zung for suggesting the study of the singularities of the bending flows, and for his guidance and helpful discussions. The author is also thankful to the anonymous referee for useful comments and valuable questions which improved the quality of the paper.

\section{Geometry of polygons in Euclidean space}
\label{s:geometry_of_polygons}

\subsection{Notations}

In this section, we recall some results on the configuration space of polygons in the Euclidean space $\R^3$ established by Kapovich and Millson in \cite{KapMil96} and by Hausmann and Knutson in \cite{HausKnut97}.

Fix $n \geq 4$ and a $n$-tuple of positive numbers $\br=(r_1, \dots, r_n)$. Denote by $\norm{.}$ the usual Euclidean norm on $\R^3$ and let $\sphere{2}$ be the unit sphere for this norm. A polygon in $\R^3$ with side lengths $\br$ is given by its vertices $(p_1, \dots, p_n)$ in $\R^3$, satisfying the length condition
\[ \forall 1 \leq i \leq n,\ \norm{p_{i+1} - p_i} = r_i \]
(with the convention $p_{n+1} = p_1$). Up to translations in $\R^3$, such a polygon is actually uniquely determined by the directions
\[ u^i = \frac{p_{i+1} - p_i}{\norm{p_{i+1} - p_i}} \in \sphere{2} \]
of its edges. That is why the set of $n$-gons in Euclidean space whose edges have lengths $r_1, \dots, r_n$ will be identified with the manifold
\[ \polspace{\br} = \set{\bu = (u^1, \dots, u^n) \in (\sphere{2})^n \mid r_1 u^1 + \cdots + r_n u^n = 0}. \]
Here we will be interested in those polygons up to isometric transformations. We denote by $\confspace{\br}$ the quotient space of $\polspace{\br}$ by the diagonal action of $\SO{3}$.

Define a symplectic form $\omega$ on the Cartesian product $(\sphere{2})^n$ by
\[ \omega = \sum_{i=1}^n r_i \omega_i, \]
where $\omega_i$ is the pull-back by the $i$-th projection of the canonical $\SO{3}$-invariant area form on the sphere $\sphere{2}$. Then the diagonal action of $\SO{3}$ on $(\sphere{2})^n$ is Hamiltonian with respect to this form $\omega$, and the associated momentum map is 
\[ \momentummap(u^1, \dots, u^n) = r_1 u^1 + \cdots + r_n u^n \]
(here we have implicitly identified $\mathfrak{so}(3)^\ast$ with $\R^3$, via the usual mapping $u \in \R^3 \mapsto \mathrm{ad}_u = u \vectprod \cdot \in \mathfrak{so}(3)$, and the isomorphism $(\R^3)^\ast \isomorphic \R^3$ given by the canonical Euclidean structure).
The set of 3D polygons with lengths $(r_1, \dots, r_n)$ is exactly the zero level-set of this momentum map. 

Suppose $\br = (r_1, \dots, r_n)$ is generic, that is to say there is no $(\varepsilon_1, \dots, \varepsilon_n) \in \set{\pm 1}^n$ such that
\[ \sum_{i=1}^n \varepsilon_i r_i = 0. \]
Then the action of $\SO{3}$ on $\momentummap^{-1}(0) = \polspace{\br}$ is free, hence the quotient space $\confspace{\br}$ has a natural manifold structure. Denote by $T_{\bu} \polspace{\br}$ the tangent space at $\bu \in \polspace{\br}$ to the space of polygons in Euclidean space. It is the set of $n$-tuples $\btildeX = (\tilde{X}^1, \dots, \tilde{X}^n) \in (\R^3)^n$ satisfying $\scalarproduct{u^i}{\tilde{X}^i} = 0$ for all $1 \leq i \leq n$, and the infinitesimal closing condition
\[ \sum_{i=1}^n r^i \tilde{X}^i = 0. \]
Because the group $\SO{3}$ is compact, the orbit $\orbit{\bu}$ of the $\SO{3}$-action passing through an element $\bu \in \polspace{\br}$ is a closed submanifold of $\polspace{\br}$. Its tangent space $T_{\bu} \orbit{\bu}$ is the set of all $n$-tuples
\[ (x \vectprod u^1, \dots, x \vectprod u^n) \]
with $x \in \R^3$, where $\vectprod$ stands for the vector cross product.
The pairing $\scalarproduct{\btildeX}{\btildeY} = \sum r_i \scalarproduct{\tilde{X}^i}{\tilde{Y}^i}$ defines a Riemannian metric
on $\polspace{\br}$, and then induces a canonical splitting
\[ T_{\bu} \polspace{\br} = T_{\bu} \orbit{\bu} \oplus \horizontaltangent{\bu}{\polspace{\br}}. \]
We then have a natural identification $T_{\class{\bu}} \confspace{\br} \isomorphic \horizontaltangent{\bu}{\polspace{\br}}$ between the tangent space to the configuration space $\confspace{\br}$ at $\class{\bu}$ and the horizontal component of this splitting.

For $1 \leq i, j \leq n$ such that $i \neq j$, denote by 
\[ \diagonal{i,j}(\bu) = 
\begin{cases}
	r_i u^i + r_{i+1} u^{i+1} + \cdots + r_{j-1} u^{j-1} & \text{if } i < j, \\
	- \diagonal{j, i}(\bu) & \text{if } i > j,
\end{cases}
 \]
the vector going from the $i$-th vertex to the $j$-th vertex of the polygon $\bu \in \polspace{\br}$. If $\abs{i - j} = 1$, then $\diagonal{i,j}(\bu)$ is a side of the polygon $\bu$, else it is a diagonal of $\bu$. Its length depends only on the configuration $\class{\bu}$ of the polygon, so the differentiable map $\tilde{f}_{i,j} : \polspace{\br} \rightarrow \R$ given by
\[ \tilde{f}_{i,j}(\bu) = \frac{1}{2} \norm{\diagonal{i,j}(\bu)}^2 \]
induces a well-defined map $f_{i,j} : \confspace{\br} \rightarrow \R$.

Most definitions and results in this section are adapted without new ideas from \cite{KapMil96}, where the authors mainly work with the \emph{caterpillar configuration} where all the diagonals emanate from the first vertex of the polygon ($i=1$ in our definition).

\begin{proposition}[{Kapovich, Millson \cite[Lemma~3.5]{KapMil96}}]
	\label{p:form_of_bending_vector_fields}
	For all $1 \leq i < j \leq n$, the vector field 
	\[ \btildeX_{i,j}(\bu) = (0, \dots, 0, \diagonal{i,j}(\bu) \vectprod u^i, \dots, \diagonal{i,j}(\bu) \vectprod u^{j-1}, 0, \dots, 0) \]
	satisfies $\diff{\tilde{f}_{i,j}} = \omega(\btildeX_{i,j}, \cdot)$. In particular, its image $\bX_{i,j}$ in $(\confspace{\br}, \omega)$ is the Hamiltonian vector field associated to $f_{i,j}$.
\end{proposition}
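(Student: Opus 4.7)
The plan is to verify the symplectic identity $\diff{\tilde{f}_{i,j}} = \omega(\btildeX_{i,j}, \cdot)$ by a direct pointwise computation in the ambient product $(\sphere{2})^n$, then to check that $\btildeX_{i,j}$ is actually tangent to $\polspace{\br}$, and finally to descend to $\confspace{\br}$ via symplectic reduction.

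First I would differentiate $\tilde{f}_{i,j}(\bu) = \frac{1}{2}\norm{\diagonal{i,j}(\bu)}^2$. Since $\diagonal{i,j}(\bu) = \sum_{k=i}^{j-1} r_k u^k$, for any $\btildeY = (\tilde{Y}^1, \dots, \tilde{Y}^n) \in T_\bu (\sphere{2})^n$ the chain rule gives
\[ \diff{\tilde{f}_{i,j}}(\btildeY) = \sum_{k=i}^{j-1} r_k \scalarproduct{\diagonal{i,j}(\bu)}{\tilde{Y}^k}. \]
Next I would compute $\omega_\bu(\btildeX_{i,j}(\bu), \btildeY) = \sum_k r_k (\omega_k)_{u^k}(\tilde{X}_{i,j}^k, \tilde{Y}^k)$, where only the indices $i \leq k \leq j-1$ contribute. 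Using $(\omega_k)_{u^k}(v, w) = \scalarproduct{u^k}{v \vectprod w}$ together with the triple product identity $(a \vectprod b) \vectprod c = \scalarproduct{a}{c} b - \scalarproduct{b}{c} a$ and the tangency relation $\scalarproduct{u^k}{\tilde{Y}^k} = 0$, each summand reduces to $r_k \scalarproduct{\diagonal{i,j}(\bu)}{\tilde{Y}^k}$, giving exactly the same total as $\diff \tilde{f}_{i,j}(\btildeY)$.

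It remains to verify tangency to $\polspace{\br}$. Each nonzero component $\tilde{X}_{i,j}^k = \diagonal{i,j}(\bu) \vectprod u^k$ trivially lies in $T_{u^k}\sphere{2}$, and the infinitesimal closing condition follows from
\[ \sum_{k=i}^{j-1} r_k (\diagonal{i,j}(\bu) \vectprod u^k) = \diagonal{i,j}(\bu) \vectprod \Big( \sum_{k=i}^{j-1} r_k u^k \Big) = \diagonal{i,j}(\bu) \vectprod \diagonal{i,j}(\bu) = 0. \]
Finally, both $\tilde{f}_{i,j}$ and $\btildeX_{i,j}$ are manifestly $\SO{3}$-invariant since the Euclidean inner product and cross product commute with the diagonal $\SO{3}$-action; they therefore descend to $f_{i,j}$ and $\bX_{i,j}$ on $\confspace{\br}$, and Marsden--Weinstein reduction transfers the Hamiltonian identity to the quotient symplectic manifold.

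No real obstacle is expected here: the only point demanding attention is bookkeeping of the weights $r_k$, which appear once via $\omega = \sum r_k \omega_k$ and once via $\diagonal{i,j}(\bu) = \sum r_k u^k$ inside the derivative of $\tilde{f}_{i,j}$, and which match up exactly so that the identity holds without any extra normalization.
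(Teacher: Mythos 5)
Your proof is correct and follows essentially the same computation as the paper: both derive $\omega_\bu(\btildeX_{i,j},\btildeY) = \sum_{k=i}^{j-1} r_k \scalarproduct{\diagonal{i,j}(\bu)}{\tilde Y^k}$ from the area-form formula, the triple-product identity $(a\vectprod b)\vectprod c = \scalarproduct{a}{c}b - \scalarproduct{b}{c}a$, and $\scalarproduct{u^k}{\tilde Y^k}=0$, and match it with $\diff\tilde f_{i,j}(\btildeY)$. The only additions you make --- explicitly checking that $\btildeX_{i,j}$ is tangent to $\polspace{\br}$ and invoking Marsden--Weinstein reduction to descend to $\confspace{\br}$ --- are routine points the paper leaves implicit.
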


\begin{proof}
	Let $\btildeY_\bu = (\tilde{Y}^1, \dots, \tilde{Y}^n) \in T_{\bu} \polspace{\br}$. We have
	\[ \omega_{\bu}(\btildeX_{i,j}(\bu), \btildeY_{\bu}) = \sum_{k=i}^{j-1} r_k \det(u^k, \diagonal{i,j}(\bu) \vectprod u^k, \tilde{Y}^k). \]
	It suffices to apply the vector calculus identities
	\[ \det(a, b, c) = \scalarproduct{a}{b \vectprod c} \quad \text{ and } \quad (a \vectprod b) \vectprod c = \scalarproduct{a}{c}b - \scalarproduct{b}{c}a \]
	and use the fact that $\scalarproduct{u_k}{\tilde{Y}^k} = 0$ to obtain
	\[
		\omega_{\bu}(\btildeX_{i,j}(\bu), \btildeY_\bu) = \sum_{k=i}^{j-1} r_k \scalarproduct{\diagonal{i,j}(\bu)}{\tilde{Y}^k} = \diff{\tilde{f}_{i,j}}(\bu)\btildeY \]
\end{proof}

Geometrically, when $\diagonal{i,j}(\bu)$ is a non-vanishing diagonal of $\bu$, this vector field corresponds via its flow to the bending of the polygon $\bu$ along this diagonal with angular speed $\norm{\diagonal{i,j}(\bu)}$. From now on, we will refer to $\btildeX_{i,j}$ as the \emph{bending vector field} associated to the diagonal $\diagonal{i,j}$. On the subset of $\polspace{\br}$ consisting of polygons $\bu$ such that $\diagonal{i,j}(\bu) \neq 0$, one can divide $\btildeX_{i,j}(\bu)$ by $\norm{\diagonal{i,j}(\bu)}$ and obtain a vector field $\btildeB_{i,j}$, which corresponds to the same bending with unit angular speed. Note that those flows are well defined on the quotient space $\confspace{\br}$, and we denote by $\bendingfield{i,j}$ and $\normalizedbendingfield{i,j}$ the images of $\btildeX_{i,j}$ and $\btildeB_{i,j}$ respectively. For later use, we also introduce the \emph{inverse bending vector field} associated to $d = \diagonal{i,j}$
\[ \btildeX_{i,j}^\text{inv}(\bu) = - (d(\bu) \vectprod u^1, \dots, d(\bu) \vectprod u^{i-1}, 0, \dots, 0, d(\bu) \vectprod u^j, \dots, d(\bu) \vectprod u^n) \]
which corresponds geometrically to the bending which rotates (with inverse orientation) the half of the polygon that $\btildeX_{i,j}$ fixes, and vice versa. Of course, $\btildeX_{i,j}$ and $\btildeX_{i,j}^\text{inv}$ have same image in the moduli space $\confspace{\br}$. Indeed,
\[ \btildeX_{i,j}(\bu) -\btildeX_{i,j}^\text{inv}(\bu) = (\diagonal{i,j}(\bu) \vectprod u^1, \dots, \diagonal{i,j}(\bu) \vectprod u^n) \in T_{\bu} \orbit{\bu}.  \]

Following the definitions in~\cite{KapMil96}, we will say that two diagonal maps $\diagonal{i,j}$ and $\diagonal{p,q}$ are \emph{disjoint} if the corresponding diagonals $\diagonal{i,j}(\buzero)$ and $\diagonal{p,q}(\buzero)$ in a convex planar $n$-gon $\buzero$ do not intersect in the interior of $\buzero$. This condition is necessary to obtain the Poisson-commutativity of the associated maps $(f_{i,j})$, that we will use to define a integrable Hamiltonian system on $(\confspace{\br}, \omega)$.

\begin{proposition}[{Kapovich, Millson \cite[Proposition~3.6]{KapMil96}}]
\label{p:diagonal_maps_Poisson_commute}
	If $\diagonal{i,j}$ and $\diagonal{p,q}$ are two disjoint diagonal maps, then the associated vector fields $\btildeX_{i,j}$ and $\btildeX_{p,q}$ satisfy
	\[ \omega(\btildeX_{i,j}, \btildeX_{p,q}) = 0. \]
	In particular the maps $f_{i,j}$ and $f_{p,q}$ Poisson-commute in $(\confspace{\br}, \omega)$.
\end{proposition}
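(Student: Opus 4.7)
My plan is to compute the pairing $\omega_{\bu}(\btildeX_{i,j}(\bu), \btildeX_{p,q}(\bu))$ pointwise on $\polspace{\br}$ and show it vanishes under the disjointness hypothesis. Since Proposition~\ref{p:form_of_bending_vector_fields} identifies $\bendingfield{i,j}$ and $\bendingfield{p,q}$ with the Hamiltonian vector fields of $f_{i,j}$ and $f_{p,q}$ on $\confspace{\br}$, the Poisson commutativity on the quotient follows immediately from the vanishing of $\omega(\btildeX_{i,j}, \btildeX_{p,q})$ upstairs.

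The first step is a combinatorial reduction. Taking $i<j$ and $p<q$ without loss of generality, the chords $(i,j)$ and $(p,q)$ fail to cross in the interior of a convex planar $n$-gon precisely when both $p$ and $q$ lie on the same arc cut out by $\{i,j\}$ on the cycle of vertices. Applied to the supports $\{i,\dots,j-1\}$ and $\{p,\dots,q-1\}$ of the two bending fields, this forces (up to swapping the two pairs) one of two configurations: either the supports are disjoint, or one is contained in the other. In the disjoint case, every summand of $\omega = \sum_k r_k \omega_k$ vanishes trivially because one of the two factors is zero at each index $k$. So the only case requiring real work is the nested one, and I may assume $\{p,\dots,q-1\} \subset \{i,\dots,j-1\}$.

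In this nested case, I would mimic the computation in the proof of Proposition~\ref{p:form_of_bending_vector_fields}, taking $\btildeX_{p,q}$ in the role of $\btildeY$, to obtain
\[ \omega_{\bu}(\btildeX_{i,j}(\bu), \btildeX_{p,q}(\bu)) = \sum_{k=p}^{q-1} r_k \det(u^k, \diagonal{i,j}(\bu) \vectprod u^k, \diagonal{p,q}(\bu) \vectprod u^k). \]
Applying the same vector identities $\det(a,b,c) = \scalarproduct{a}{b \vectprod c}$ and $(a \vectprod b) \vectprod c = \scalarproduct{a}{c}b - \scalarproduct{b}{c}a$, together with $\norm{u^k} = 1$, each summand collapses to $r_k \det(u^k, \diagonal{i,j}(\bu), \diagonal{p,q}(\bu))$. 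Multilinearity of the determinant and the identity $\sum_{k=p}^{q-1} r_k u^k = \diagonal{p,q}(\bu)$, which is nothing but the definition of $\diagonal{p,q}$, then give
\[ \omega_{\bu}(\btildeX_{i,j}(\bu), \btildeX_{p,q}(\bu)) = \det(\diagonal{p,q}(\bu), \diagonal{i,j}(\bu), \diagonal{p,q}(\bu)) = 0 \]
by alternation.

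No step here is genuinely hard. The only subtlety is the combinatorial dichotomy on supports, which must be extracted from the geometric disjointness hypothesis; the reason the calculation closes so cleanly is that the restricted partial sum $\sum_{k=p}^{q-1} r_k u^k$ is tautologically the diagonal vector $\diagonal{p,q}(\bu)$, which then appears twice in the final determinant.
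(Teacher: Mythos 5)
Your proposal is correct and follows essentially the same route as the paper: compute $\omega(\btildeX_{i,j}, \btildeX_{p,q})$ as a sum over the intersection of supports, reduce each summand via the vector identities to $r_k\det(\diagonal{i,j}(\bu), \diagonal{p,q}(\bu), u^k)$, and sum over $k$ to produce a degenerate determinant with $\diagonal{p,q}(\bu)$ repeated. The paper absorbs your two-case dichotomy into a single computation over the index set $I = \{i,\dots,j-1\} \cap \{p,\dots,q-1\}$, observing afterward that disjointness forces $I$ to be either $\{p,\dots,q-1\}$ or empty, but this is the same combinatorial observation you make upfront.
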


\begin{proof}
	Without loss of generality, we can assume $i < j$, $p < q$ and $i < p$. For any $\bu \in \polspace{\br}$,
	\[ \omega_{\bu}(\btildeX_{i,j}(\bu), \btildeX_{p,q}(\bu)) = \sum_{k \in I} r_k \det(u^k, \diagonal{i,j}(\bu) \vectprod u^k, \diagonal{p,q}(\bu) \vectprod u^k) \]
	where $I$ is the set of integers $k$ such that $i \leq k \leq j-1$ and $p \leq k \leq q-1$. Using vector calculus identities, we obtain
	\[ \omega_{\bu}(\btildeX_{i,j}(\bu), \btildeX_{p,q}(\bu)) = \sum_{k \in I} r_k \det(\diagonal{i,j}(\bu), \diagonal{p,q}(\bu), u^k). \]
	Now it suffices to remark that if $\diagonal{i,j}$ and $\diagonal{p,q}$ are disjoint, then $I$ is either $\integerinterval{p}{q-1}$ or the empty set. In the second case the right-hand side of the equation is zero, in the first case it can be written as \[ \det(\diagonal{i,j}(\bu), \diagonal{p,q}(\bu), \diagonal{p,q}(\bu)) \] and then it vanishes too.
\end{proof}

Given a family of $n-3$ disjoint diagonal maps $d_1, \dots, d_{n-3}$, define a map $\tilde{\integralsmap} = (\tilde{\integralsmap}_1, \dots, \tilde{\integralsmap}_{n-3}) : \polspace{\br} \rightarrow \R^{n-3}$ by
\[ \tilde{\integralsmap}_k(\bu) = \frac{1}{2} \norm{d_k(\bu)}^2 = \tilde{f}_{i,j}(\bu), \]
where $1 \leq k \leq n-3$ and $d_k = \diagonal{i, j}$. This map induces a well-defined map $\integralsmap : \confspace{\br} \rightarrow \R^{n-3}$, which is the integrable Hamiltonian system we are interested in. Now we will recall some results established by Kapovich and Millson~\cite{KapMil96}. They prove most of these results in the case where $d_k = \diagonal{1, k}$ for all $1 \leq k \leq n-3$, but they obviously hold for any choice of disjoint diagonals.

\begin{remark}
\label{r:orientation_of_diagonals}
The two diagonals $\diagonal{i,j}$ and $\diagonal{j,i}$ provide the same map $\tilde{f}_{i,j} = \tilde{f}_{j,i}$, so when fixing a family of disjoint diagonals $(d_1, \dots, d_{n-3})$, we can always assume that each $d_k = \diagonal{i_k, j_k}$ satisfies $i_k < j_k$. In other words, the system $F$ does not depend on the orientation of the diagonals $d_1, \dots, d_{n-3}$. That is why a diagonal $d_k$ will be often considered up to orientation with no further precision.
\end{remark}

\subsection{Singular points of the system}

Suppose fixed a family of disjoint diagonals $(d_1, \dots, d_{n-3})$, and let $\integralsmap : \confspace{\br} \rightarrow \R^{n-3}$ be the corresponding integrable Hamiltonian system on $(\confspace{\br}, \omega)$. For $1 \leq i < j < k \leq n$, the \emph{face} of the polygon $\bu \in \polspace{\br}$ between the vertices $i$, $j$ and $k$ is the triple
\[ \face{i,j,k}(\bu) = (\diagonal{i,j}(\bu), \diagonal{j,k}(\bu), \diagonal{k,i}(\bu)). \]
Such a face will be said \emph{adapted} to the system $\integralsmap$ if each component of the triple $\face{i,j,k}(\bu)$ is either a side $\diagonal{i,i}(\bu) = r_i u^i$ of $\bu$, or one of the fixed diagonal $d_1(\bu), \dots, d_{n-3}(\bu)$ (up to orientation, that is $\diagonal{i,j} = \pm d_p$, see Remark~\ref{r:orientation_of_diagonals} above). This obviously depends only on the integers $i,j,k$: the family of adapted faces $(\face{i,j,k})$ is uniquely determined by the choice of disjoint diagonals. Those faces are exactly the ones with constant edge lengths along the fibers $\integralsmap^{-1}(c_1, \dots, c_{n-3})$ of the system.
Adapted faces provide the following characterization for singular points:

\begin{proposition}[{Charles \cite[Theorem~4.1]{charles2010quantization}}]
	\label{p:description_of_singular_points}
	The configuration $\class{\bu} \in \confspace{\br}$ is a singular point of the system $F$ if and only if there exists a face $\face{i,j,k}$ adapted to $F$ such that $\face{i,j,k}(\bu)$ is degenerate (in the sense that its components are linearly dependent).
\end{proposition}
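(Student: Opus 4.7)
My plan is to translate ``$\class{\bu}$ is singular'' into a concrete algebraic condition on the polygon $\bu$, its fixed diagonals $d_p(\bu)$ and its sides $u^k$, and then analyze this condition using the combinatorial structure of the triangulation into adapted faces.

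First I would observe that $\class{\bu}$ is singular if and only if the Hamiltonian vector fields $\bendingfield{d_1}, \dots, \bendingfield{d_{n-3}}$ are linearly dependent in $T_{\class{\bu}}\confspace{\br}$. Lifting this to $\polspace{\br}$, this means that there exist scalars $\lambda_1, \dots, \lambda_{n-3}$ not all zero and a vector $x \in \R^3$ such that
\[ \sum_{p=1}^{n-3} \lambda_p \btildeX_{d_p}(\bu) = (x \vectprod u^1, \dots, x \vectprod u^n). \]
Using the explicit formula from Proposition~\ref{p:form_of_bending_vector_fields}, this is equivalent to $v_k(\bu) \vectprod u^k = 0$ for every $1 \leq k \leq n$, where
\[ v_k(\bu) = \sum_{p=1}^{n-3} \lambda_p \chi_p(k) d_p(\bu) - x \]
and $\chi_p(k) \in \set{0,1}$ indicates whether edge $k$ lies in the arc of $\bu$ rotated by $\btildeX_{d_p}$. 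The freedom to replace $\btildeX_{d_p}$ by $\btildeX_{d_p}^\text{inv}$, which differs by a vector in $T_{\bu}\orbit{\bu}$, lets me choose this arc convention independently for each diagonal, modifying $x$ correspondingly.

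The key combinatorial observation is that $\chi_p(k)$ depends only on which triangular face $T$ of the triangulation contains the side $u^k$: namely $\chi_p(k) = 1$ iff $T$ lies on the arc side of $d_p$. Hence $v_k(\bu) = v_T(\bu)$ is constant along the sides of $\bu$ belonging to a single face $T$, and for two adjacent faces $T,T'$ sharing a diagonal $d_p$ one has $v_T - v_{T'} = \pm\lambda_p d_p(\bu)$. The system above then reads: for each face $T$ and each polygon side $u^k$ of $T$, the vector $v_T$ is parallel to $u^k$. For the reverse implication, if some adapted face $\face{i,j,k}$ is degenerate, i.e.\ its three edge vectors are collinear, one can take $v_T$ to be any common direction and solve for a nontrivial $(\lambda_p, x)$ supported on the diagonals belonging to $T$, so $\class{\bu}$ is singular.

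For the direct implication I would argue by contraposition. Assume every adapted face is nondegenerate. A leaf of the dual tree of the triangulation is a face $T$ with at least two polygon sides, which are non-parallel by nondegeneracy; hence $v_T$ must vanish. Propagating along the dual tree, at each step I use the relation $v_T = v_{T'} \pm \lambda_p d_p(\bu)$ across the shared diagonal together with the parallelism of $v_T$ to the polygon sides of $T$ and nondegeneracy of $T$ to conclude both $\lambda_p = 0$ and $v_T = 0$. Since the dual graph is a tree, this induction terminates, forcing all $\lambda_p$ to be zero, a contradiction. The main obstacle is handling faces with fewer than two polygon-side edges (in particular faces where all three edges are diagonals): in such a case the constraint $v_T \parallel u^k$ is empty and one must combine nondegeneracy of $T$ with the relations coming from several adjacent faces to eliminate the $\lambda_p$'s one by one.
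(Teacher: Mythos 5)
Your proof is correct in outline, and it follows a genuinely different route from the paper for the direction ``every adapted face nondegenerate $\Rightarrow$ regular.'' The paper proves this by deferring to \S 2.3: the construction of global action--angle coordinates $(\length{k}, \dihedralangle{k})$ on the regular part shows the fiber is a Liouville torus of dimension $n-3$, which implies $\diff{F}$ has full rank there. Your argument is instead an explicit linear-algebra computation: lift a hypothetical dependence $\sum_p \lambda_p \bendingfield{d_p} = 0$ in $T_{\class{\bu}}\confspace{\br}$ to $\sum_p \lambda_p \btildeX_{d_p}(\bu) = (x \vectprod u^1,\dots,x \vectprod u^n)$, observe that the coefficient $v_k = \sum_p \lambda_p \chi_p(k)\, d_p(\bu) - x$ is constant on each face $T$ of the triangulation, and then propagate the constraint $v_T \vectprod u^k = 0$ inward from the leaves of the dual tree to force all $\lambda_p = 0$. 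This is self-contained and does not rely on the later torus structure; it is arguably the more elementary argument.

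One remark on the obstacle you flag (faces $T$ all of whose edges are diagonals, where $v_T \parallel u^k$ gives no constraint): this resolves cleanly if you process faces in post-order starting from a rooted leaf. When you reach a face $T$ whose children have all been shown to carry $v_{T'} = 0$, the relations $v_T = \pm\lambda_{p_j} d_{p_j}$ hold for each child edge $d_{p_j}$; if $T$ has at least two children, two non-parallel edges of the nondegenerate triangle $T$ both contain $v_T$, hence $v_T = 0$ and the corresponding $\lambda$'s vanish; if $T$ has one child, its third edge is necessarily a polygon side (one parent diagonal, one child diagonal, one side), giving the same conclusion; and the all-diagonals case forces $T$ to have at least two children, so it falls under the first alternative. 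Nondegeneracy of $\bu$ as a polygon (not all $u^i$ collinear) then kills $x$ at the end. For the converse (degenerate face $\Rightarrow$ singular), your abstract construction of $(\lambda_p, x)$ supported on the diagonals of the degenerate face produces essentially the same explicit dependence that the paper exhibits via its three-way case analysis on how many edges of the face are diagonals; the paper's version is more concrete, yours is more uniform.
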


\begin{proof}
	We will see later that when no adapted face $\face{i,j,k}(\bu)$ is degenerate, then the fiber $N = \integralsmap^{-1}(c_1, \dots, c_{n-3})$ containing $\class{\bu}$ is diffeomorphic to $\torus{n-3}$. This implies that $\class{\bu}$ is a regular value of $\integralsmap$. Hence it suffices to prove now that when some adapted face $\face{i,j,k}(\bu)$ is degenerate, $\class{\bu}$ is a singular value of the system.
	
	Suppose first that a component of $\face{i,j,k}(\bu)$ vanishes, say $\diagonal{i,j}(\bu)$. Then necessarily $j > i + 1$, in other words $\diagonal{i,j}$ is a diagonal and not a side. It follows that the Hamiltonian vector field $\btildeX_{i,j}(\bu)$ vanishes, and then by nondegeneracy of $\omega$, so does the differential of $\tilde{f}_{i,j}$ at $\bu$. By definition of being an adapted face, $\tilde{f}_{i,j}$ is precisely a component of the map $\tilde{\integralsmap}$, hence $\bu$ is a singular value of $\tilde{\integralsmap}$. It follows that $\class{\bu}$ is a singular value of $\integralsmap$.
	
	Suppose now that none of the components of $\face{i,j,k}(\bu)$ vanishes. Recall that $n \geq 4$, so at least one of the components of $\face{i,j,k}(\bu)$ is a diagonal of $\bu$. We will distinguish the cases when exactly one, two or three components are diagonals while the other are sides of the polygon.
	
	\begin{enumerate}
		\item Only one component of $\face{i,j,k}(\bu)$ is a diagonal $d_q(\bu)$. Then the sides of $\face{i,j,k}(\bu)$ are either 
		\begin{itemize}
			\item $d_q(\bu) = \diagonal{p,p+2}(\bu)$, $a = r_p u^p$ and $b = r_{p+1} u^{p+1}$, with $1 \leq p \leq n - 2$,
			\item $d_q(\bu) = \diagonal{n-1, 1}(\bu)$, $a = r_{n-1} u^{n-1}$ and $b = r_n u^n$,
			\item $d_q(\bu) = \diagonal{n, 2}(\bu)$, $a = r_n u^n$ and $b = r_1 u^1$.
		\end{itemize}
		The degeneracy of $\face{i,j,k}(\bu)$ implies $d_q(\bu) \vectprod a = d_q(\bu) \vectprod b = 0$. In the first case, this gives
		\[ \btildeX_{p,p+2}(\bu) = (0, \dots, 0), \]
		while in the two other cases the Hamiltonian vector field $\btildeX$ associated to $F_\ell$ can be written
		\[ \btildeX(\bu) = (d_q(\bu) \vectprod u^1, \dots, d_q(\bu) \vectprod u^n) \in T_{\bu} \orbit{\bu} \]
		and then its image $X(\bu)$ vanishes in $T_{\class{\bu}} \confspace{\br}$.
		Geometrically, that corresponds to the fact that the bending flow associated to $d_q(\bu)$ either has no effect on $\bu$, or rotates the whole polygon $\bu$ (and then has no effect on $\class{\bu}$).
		
		\item Two components of $\face{i,j,k}(\bu)$ are diagonals $d_p(\bu)$ and $d_q(\bu)$.
		
		If those two diagonals are $\diagonal{1,\ell}(\bu)$ and $\diagonal{\ell,n}(\bu)$ (with, necessarily, $3 \leq \ell \leq n-2$), then the third side of $\face{i,j,k}$ is $r_n u^n$. The condition of degeneracy implies the existence of $\alpha, \beta \neq 0$ such that $\alpha \diagonal{1,\ell}(\bu) = u^n = \beta \diagonal{\ell, n}(\bu)$. Then we have
		\[ \alpha \btildeX_{1, \ell}(\bu) + \beta \btildeX_{\ell, n} = (u^n \vectprod u^1, \dots, u^n \vectprod u^n) \in T_{\bu}\orbit{\bu}, \]
		therefore $\alpha \bendingfield{1,\ell}(\bu) + \beta \bendingfield{\ell, n}(\bu) = 0$ in $T_{\class{\bu}}\confspace{\br}$. Geometrically, this illustrate the fact that the flows associated to $d_p(\bu)$ and $d_q(\bu)$ are ``almost'' collinear, except they do not bend the same half of the polygon. They become rigorously collinear once we consider the configuration space $\confspace{\br}$.
		
		Now if those two diagonals are $\diagonal{a,b}(\bu)$ and $\diagonal{a,b+1}(\bu)$, then it suffices to remark that the degeneracy condition $\diagonal{a,b+1}(\bu) = \alpha \diagonal{a,b}(\bu) = \beta u^b$ leads to
		\[ \btildeX_{a,b+1} = \alpha \btildeX_{a,b}. \]
		An analogous equality is obtained when $d_p(\bu) = \diagonal{a,b}(\bu)$ and $d_q(\bu) = \diagonal{a+1, b}(\bu)$.
		
		\item The three sides $\diagonal{i,j}(\bu)$, $\diagonal{j,k}(\bu)$, $\diagonal{i,k}(\bu)$ of the face $\face{i,j,k}$ are diagonals of $\bu$. Then $\diagonal{i,k}(\bu) = \alpha \diagonal{i,j}(\bu) = \beta \diagonal{j,k}(\bu)$ implies
		\[ \alpha \btildeX_{i,j}(\bu) + \beta \btildeX_{j,k}(\bu) = \btildeX_{i,k}(\bu). \]
	\end{enumerate}
	In the three cases, we obtain that the Hamiltonian vector fields associated to the maps $F_1, \dots, F_{n-3}$ are linearly dependent at $\class{\bu}$. Equivalently, the differential maps $\diff{F_1}(\class{\bu}), \dots, \diff{F_{n-3}}(\class{\bu})$ are linearly dependent, therefore $\class{\bu}$ is a singular point of $F$.
\end{proof}

\subsection{Global action--angle coordinates on the regular configurations} Denote by $\confspace{\br}^0$ the regular part of $\confspace{\br}$, that is the set of configurations $\class{\bu}$ such that no adapted face $\face{i,j,k}$ is degenerate at $\bu$. This subset of $\confspace{\br}$ is equipped with global action--angle coordinates.

For $1 \leq k \leq n-3$, define $\length{k} : \confspace{\br}^0 \rightarrow \R$ by
\[ \length{k}(\class{\bu}) = \norm{d_k(\bu)} = 2\sqrt{F_k(\bu)}. \]
The diagonals $d_1, \dots, d_{n-3}$ do not vanish on $\confspace{\br}^0$, so $\length{1}, \dots, \length{n-3}$ are smooth functions on $\confspace{\br}^0$. If $d_k = \diagonal{i,j}$, the Hamiltonian vector field associated to $\length{k}$ is the normalized bending vector field $\normalizedbendingfield{i,j}$ defined previously. Its flow is defined by
\[ \psi_{k}^t(\class{\bu}) = \class{ u^1, \dots, u^{i-1}, R_{d_k(\bu)}^t u^i, \dots, R_{d_k(\bu)}^t u^{j-1}, u^j, \dots, u^n } \]
where $R_{d_k(\bu)}^t$ is the rotation of angle $t$ around the axis $d_k(\bu)$. Note that
\[ 4\Poissonbracket{f_p}{f_q} = \Poissonbracket{\length{p}^2}{\length{q}^2} = 4\length{p}\length{q}\Poissonbracket{\length{p}}{\length{q}} \]
so Proposition~\ref{p:diagonal_maps_Poisson_commute} implies the Poisson-commutativity:
\[ \Poissonbracket{\length{p}}{\length{q}} = 0. \]

For $1 \leq k \leq n-3$, the diagonal $d_k$ belongs to the boundaries of exactly two adapted faces $\face{1}$ and $\face{2}$. For $\bu \in \confspace{\br}^0$, denote by $\hat{\theta}_k(\bu)$ the dihedral angle between $\face{1}$ and $\face{2}$, oriented in such a way that $\hat{\theta}_k$ decreases when applying the flow $\psi_k^t$ with positive values of $t$. Then define a map $\dihedralangle{k} : \confspace{\br}^0 \rightarrow \torus{1}$ by
\[ \dihedralangle{k}(\class{\bu}) = \pi - \hat{\theta}_k(\bu). \]
It is defined this way so that the condition $\theta_k(\class{\bu}) = 0$ for all $1 \leq k \leq n - 3$ corresponds to a planar polygon. Lemma~4.5 of~\cite{KapMil96} states that
\[ \Poissonbracket{\dihedralangle{p}}{\dihedralangle{q}} = 0. \]

By the definitions above, we have
\[ \dihedralangle{p}(\psi_q^t(\class{\bu})) = \dihedralangle{p}(\class{\bu}) + t \kronecker{p}{q}, \]
which after differentiation gives the relation
\[ \Poissonbracket{\dihedralangle{p}}{\length{q}} = \kronecker{p}{q}. \]
Therefore $\length{1}, \dots, \length{n-3}, \dihedralangle{1}, \dots, \dihedralangle{n-3}$ are global action--angle coordinates on $\confspace{\br}^0$.

If $N = \integralsmap^{-1}(c_1, \dots, c_{n-3})$ is a fiber of the system where no adapted face vanishes, these coordinates provide a diffeomorphism
\[ N \isomorphic \torus{1} \times \cdots \times \torus{1} = \torus{n-3}, \]
where each $\torus{1}$ component correspond to the bending flow around some diagonal $d_k$.

\section{Extension to the non-generic case}
\label{s:extension_non-generic}

Suppose now that $\br=(r_1, \dots, r_n)$ is not generic, that is
\[ \sum_{i=1}^{n} \varepsilon_i r_i = 0 \]
for some $(\varepsilon_1, \dots, \varepsilon_n) \in \set{\pm 1}^n$. Then there exist polygons $\bu =(u^1, \dots, u^n) \in \polspace{\br}$ such that $u^1, \dots, u^n$ belong to a same line. For example, take $\bu =(\varepsilon_1 u^0, \dots, \varepsilon_n u^0)$ for any $u^0 \in \sphere{2}$. The existence of such degenerate polygons implies that the action of $G = \SO{3}$ on $\polspace{\br}$ is not free anymore. Indeed, if $\bu$ is a degenerate polygon contained in the line $\set{\lambda u^0 \mid \lambda \in \R}$, then its isotropy group $G_{\bu}$ is the set of all rotations of axis $u^0$. The quotient space $\confspace{\br}$ is not a manifold anymore.

However, the configuration space $\confspace{\br}$ still has the structure of a symplectic orbispace in the sense of~\cite{Pfl03}. Indeed, the orbispace atlas on $\confspace{\br}$ consists of the single chart $(\polspace{\br}, \SO{3}, \pi)$, where $\pi : \polspace{\br} \rightarrow \confspace{\br}$ is the canonical projection on the quotient space. We take as a $\SO{3}$-invariant symplectic form on this chart the form $\omega$ defined above. By definition, the smooth maps $f : U \rightarrow \R$ on an open subset $U \subset \confspace{\br}$ are the maps such that $f \circ \pi : \pi^{-1}(U) \rightarrow \R$ is smooth. This is the case in particular for the maps $f_{i,j}$ defined above.

Recall that a symplectic orbispace has a natural stratification into symplectic manifolds:

\begin{proposition}[Pflaum {\cite[\S 1.3, \S 2.4, and Proposition~3.3]{Pfl03}}]
	Let $G$ be a Lie group acting properly on a smooth manifold $\tilde{M}$. Denote by $M=\tilde{M}/G$ the corresponding quotient space. If $x \in \tilde{M}$, denote by $G_x$ the isotropy group of the action at $x$, by $N(G_x)$ its normalizer in $G$, and by $M_{G_x}$ the submanifold of elements in $M$ with same isotropy group. Then:
	\begin{enumerate}
		\item The manifold $\tilde{M}$ admits a natural stratification \emph{by isotropy type}. The strata are the submanifolds consisting of elements of $\tilde{M}$ whose isotropy groups are conjugate to each other.
		\item This stratification induces a stratification of the quotient space $M$. The stratum $S_x$ containing $\class{x} \in M$ is diffeomorphic to the (smooth) quotient space of $M_x$ by the proper and free action of $\Gamma_x = G_x / N(G_x)$.
	\end{enumerate}
	Let $X$ be an orbispace. Fix $x \in X$ and consider a local orbispace chart $(\tilde{U}, G, \pi)$ around $x$.
	\begin{enumerate}
		\setcounter{enumi}{2}
		\item Let $S_x$ be the stratum containing $x$ in the stratification of $U=\tilde{U}/G$ by isotropy type. Then $S_x$ does not depend on the choice of the local chart $(\tilde{U}, G, \pi)$. It follows that $X$ admits a canonical stratification.
		\item Moreover if $X$ is a symplectic orbispace, then every stratum $S_x$ carries the structure of a Poisson manifold in a canonical way.
	\end{enumerate}
\end{proposition}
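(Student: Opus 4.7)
The plan is to ground everything in the slice theorem for proper Lie group actions (Koszul, Palais): at every $x \in \tilde{M}$ there is a $G_x$-invariant slice $S$ such that a $G$-invariant neighborhood of $G \cdot x$ is equivariantly diffeomorphic to the twisted product $G \times_{G_x} S$, with $S$ further reducible to an open neighborhood of $0$ in a $G_x$-representation. For assertion (1), in such a linear slice each subgroup $H \leq G_x$ yields a fixed-point subspace $S^H$ which is linear, and the locus of exact isotropy type $(H)$ is open within the $G$-saturation of such subspaces; this globalizes to partition $\tilde{M}$ into smooth submanifolds of constant orbit type, giving the natural stratification by isotropy type. For (2), I would observe that an element $g \in G$ maps $\tilde{M}_{G_x}$ into itself if and only if $g G_x g^{-1} = G_x$, i.e.\ $g \in N(G_x)$; since $G_x$ acts trivially on $\tilde{M}_{G_x}$, the action factors through the quotient $\Gamma_x$ and becomes free. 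Properness is inherited from $G$, so the quotient manifold theorem produces a smooth structure on $\tilde{M}_{G_x}/\Gamma_x$, and one identifies it with the stratum $S_x \subset M$ via the fact that every orbit of type $(G_x)$ meets $\tilde{M}_{G_x}$ in a single $\Gamma_x$-orbit.

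For (3), two orbispace charts $(\tilde{U}_i, G_i, \pi_i)$ around $x$ are, after refinement, related by an equivariant embedding covering a group homomorphism, by the compatibility axioms of an orbispace atlas. Such embeddings send isotropy submanifolds to isotropy submanifolds and therefore induce a bijection between the local strata computed in each chart; hence the stratum $S_x$ is intrinsic, and assembling the local data yields a canonical stratification of $X$.

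The main obstacle is (4). My approach is to use the equivariant symplectic slice theorem of Marle--Guillemin--Sternberg to put $\omega$ into a canonical normal form near $G \cdot x$, and then to invoke Sjamaar--Lerman style singular symplectic reduction: the intersection of an isotropy stratum with a symplectic slice inherits a symplectic form, and descending by $\Gamma_x$ produces a symplectic (hence Poisson) structure on $S_x$. Equivalently, one may define a Poisson bracket on $C^\infty(S_x)$ by lifting two functions to $G$-invariant smooth functions on a neighborhood in $\tilde{U}$ and taking their ambient Poisson bracket, then checking that the difference of two lifts vanishes on the lifted stratum and so does not affect the result. The delicate point, and the one I expect to consume most of the work, is to verify well-definedness across charts and to reconcile the symplectic normal form with the orbispace compatibility maps---this is precisely where properness of the action, rather than mere compactness, enters in full force.
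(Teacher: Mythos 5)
The paper does not prove this proposition at all: it is stated as a citation of Pflaum (\S 1.3, \S 2.4, Proposition~3.3 of \cite{Pfl03}) with no \verb|proof| environment, so there is no in-paper argument to compare your sketch against. That said, your reconstruction is a reasonable outline of what a proof in Pflaum's framework would look like, and a few comments are worth making.

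On item (2): the paper's wording $\Gamma_x = G_x / N(G_x)$ is a typographical error --- since $G_x \subseteq N(G_x)$ this quotient is not a group. The correct Weyl-type group is $\Gamma_x = N(G_x)/G_x$, and your argument (``$g$ preserves $\tilde{M}_{G_x}$ iff $g \in N(G_x)$, and $G_x$ acts trivially, so the residual action is by $N(G_x)/G_x$ and is free'') tacitly proves the corrected statement. You should flag the typo explicitly rather than silently fixing it. Also note that your sentence conflates $M_{G_x}$ (the paper's notation, but intended to be a subset of $\tilde{M}$) and $\tilde{M}_{G_x}$; the paper itself is sloppy here (and also introduces $M_x$ without defining it), so be careful to state which space you are taking the quotient of.

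On item (4): your primary route via the Marle--Guillemin--Sternberg normal form and Sjamaar--Lerman reduction presupposes a Hamiltonian $G$-action with momentum map, which is strictly more structure than the general symplectic-orbispace hypothesis (a chart $(\tilde{U}, G, \pi)$ with $G$ acting by symplectomorphisms). For the statement as written, the second method you mention --- defining $\{f, g\}_{S_x}$ by lifting to $G$-invariant functions on $\tilde{U}$, taking the ambient Poisson bracket (which is again $G$-invariant because $G$ acts symplectically), and restricting --- is the right one and does not need a momentum map. It produces a Poisson structure, which is exactly what is asserted; the strata need not be symplectic in general. Your worry about chart-independence is legitimate but is handled by the same compatibility-of-charts argument used in (3): the transition embeddings are equivariant symplectomorphisms covering group morphisms, hence intertwine the invariant-lift construction. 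Overall the sketch is sound once these two points are tightened.
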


In our case, this decomposition coincides with the one between degenerate and nondegenerate polygons.

\begin{proposition}
	\label{p:stratification_confspace}
	Let $\br =(r_1, \dots, r_n)$ be non-generic. Then the configuration space $\confspace{\br}$ is a symplectic orbispace whose corresponding stratification is
	\[ \confspace{\br} = \ndegconfspace{\br} \sqcup \degconfspace{\br}, \]
	where $\ndegconfspace{\br}$ (resp. $\degconfspace{\br}$) is the manifold consisting of $\class{\bu} \in \confspace{\br}$ with $\bu$ nondegenerate polygon (resp. with $\bu$ degenerate polygon).
	
	The $\ndegconfspace{\br}$ component is open and dense in $\confspace{\br}$, while the $\degconfspace{\br}$ component is a finite union of points. Each stratum carries in a natural way the structure of a Poisson manifold.
\end{proposition}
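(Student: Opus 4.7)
The approach is to apply the preceding stratification result of Pflaum to the single chart $(\polspace{\br}, \SO{3}, \pi)$. Since $\SO{3}$ is compact, its action on $\polspace{\br}$ is automatically proper, and the $\SO{3}$-invariant form $\omega$ makes $\confspace{\br}$ a symplectic orbispace. The proof then reduces to identifying the isotropy types of points of $\polspace{\br}$ and checking the geometric claims on the two strata.

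For the isotropy computation: if $\bu$ is nondegenerate, the edge directions $u^1,\ldots,u^n$ span $\R^3$, so any $R \in \SO{3}$ fixing all of them is the identity, giving $G_{\bu} = \set{e}$. If $\bu$ is degenerate, one has $u^i = \varepsilon_i u^0$ for some $u^0 \in \sphere{2}$ and $(\varepsilon_1,\ldots,\varepsilon_n) \in \set{\pm 1}^n$, and the isotropy group is the stabilizer of $u^0$, namely the group of rotations around the axis $\R u^0$, isomorphic to $\SO{2}$. Only two isotropy types appear, so the cited proposition yields the two-piece decomposition $\confspace{\br} = \ndegconfspace{\br} \sqcup \degconfspace{\br}$, with the claimed Poisson manifold structure on each stratum given by item (4).

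Openness of $\ndegconfspace{\br}$ in $\confspace{\br}$ is immediate since nondegeneracy is an open condition (the rank of $(u^1,\ldots,u^n)$ is lower semicontinuous). For finiteness of $\degconfspace{\br}$, every degenerate polygon is parameterized by a pair $(u^0, (\varepsilon_i))$ with $\sum \varepsilon_i r_i = 0$; the admissible sign patterns form a finite set, and for each fixed pattern the family $\set{(\varepsilon_i u^0) : u^0 \in \sphere{2}}$ is a single $\SO{3}$-orbit (the action of $\SO{3}$ on $\sphere{2}$ being transitive). Hence $\degconfspace{\br}$ is a finite disjoint union of points, each with $\SO{2}$ isotropy.

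The main obstacle is the density of $\ndegconfspace{\br}$ in $\confspace{\br}$: at a degenerate configuration $\class{\bu}$, one must produce nondegenerate configurations arbitrarily close to $\class{\bu}$. I would argue by a dimension count within a local slice to the $\SO{3}$-orbit of $\bu$ inside $(\sphere{2})^n$. The closing condition $\sum r_i u^i = 0$ imposes codimension at most $3$, while the subspace of infinitesimal deformations that keep all $u^i$ collinear is much more restrictive: it forces each perturbation to preserve the axis $\R u^0$ up to first order, leaving only a low-dimensional family. Since $n \geq 4$, there remain directions satisfying the closing condition but leaving the degeneracy locus, and any such direction generates a path of nondegenerate closed polygons limiting to $\bu$.
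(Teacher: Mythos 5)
Your overall strategy is the same as the paper's: apply Pflaum's stratification result to the single orbispace chart $(\polspace{\br}, \SO{3}, \pi)$, compute the isotropy types, and identify the two strata with the nondegenerate and degenerate loci. Two remarks.

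First, a small but genuine slip in the isotropy computation: you claim that for a nondegenerate $\bu$ the edge directions $u^1,\dots,u^n$ span $\R^3$. That is false in general — any nondegenerate \emph{planar} polygon has all its $u^i$ lying on a great circle, so they span only a $2$-plane. The correct justification (and the one the paper uses) is that the fixed-point set of any nontrivial $g \in \SO{3}$ acting on $\sphere{2}$ is a single antipodal pair $\set{v_0, -v_0}$; hence $g$ can stabilize $\bu$ only if every $u^i \in \set{v_0,-v_0}$, i.e.\ only if $\bu$ is degenerate. Your conclusion $G_{\bu} = \set{e}$ is right, but not for the reason given, and the fix is short.

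Second, you devote most of your effort to proving density of $\ndegconfspace{\br}$ by an infinitesimal slice/dimension-count sketch, which is left at the level of a plausibility argument (``leaving only a low-dimensional family''). The paper does not argue density at all, since it follows immediately once one knows that $\degconfspace{\br}$ is a finite set of points: $\ndegconfspace{\br}$ is a manifold of dimension $2(n-3) \geq 2$ (since $n \geq 4$), and the complement of a finite set in an orbispace whose strata all have positive dimension away from that set is dense. If you want an argument in $\polspace{\br}$ directly, it is simpler to note that the degenerate locus $\degpolspace{\br}$ is parameterized by finitely many sign patterns times $\sphere{2}$, hence has dimension $2$, while $\polspace{\br}$ has dimension $2n-3 \geq 5$. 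Your finiteness argument for $\degconfspace{\br}$ — finitely many sign patterns with $\sum \varepsilon_i r_i = 0$, each contributing a single $\SO{3}$-orbit by transitivity on $\sphere{2}$ — is in fact cleaner than the paper's phrasing through $M_{G_{\bu}}$ and arrives at the same conclusion.
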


\begin{proof}
	Let $g \in \SO{3}$ be different from the identity. The set of elements in $\sphere{2}$ fixed by $g$ is $\set{v_0, -v_0}$, where $v_0 \in \sphere{2}$ spans the axis of the rotation $g$. Then $g$ belongs to the isotropy group $G_{\bu}$ of a polygon $\bu \in \polspace{\br}$ if and only if $\bu$ is a degenerate polygon contained in the axis of $g$. So, the isotropy group of $\bu \in \polspace{\br}$ is
	\[ G_{\bu} = \begin{cases}
	\set{\text{rotations of axis }u^1} &\text{if } \bu \text{ is degenerate}, \\
	\set{\identity{}} &\text{if } \bu \text{ is nondegenerate}.
	\end{cases} \]
	The subgroups of rotations around a fixed axis are conjugate to each other in $\SO{3}$, so the decomposition of $\polspace{\br}$ with respect to the conjugacy classes of the isotropy groups is the partition
	\[ \polspace{\br} = \ndegpolspace{\br} \sqcup \degpolspace{\br} \]
	between nondegenerate and degenerate polygons, leading to the stratification of $\confspace{\br}$ by the sets $\ndegconfspace{\br} = \pi(\ndegpolspace{\br})$ and $\degconfspace{\br} = \pi(\degpolspace{\br})$.
	
	Now remark the following. The set $\ndegpolspace{\br}$ is exactly the set of polygons $\bu$ with trivial isotropy group, so the action of $\SO{3}$ is free on $\ndegpolspace{\br}$ and $\ndegconfspace{\br}$ is exactly the corresponding quotient manifold. If $\bu \in \degpolspace{\br}$ is a degenerate polygon, then $M_{G_{\bu}}$ is the set of degenerate polygons in $\confspace{\br}$ with same direction as $\bu$. It is a subset of 
	\[ \set{ (\varepsilon_1 u^1, \dots, \varepsilon_n u^1) \mid \varepsilon_1 = \pm 1, \dots, \varepsilon_n = \pm 1} \]
	which is finite, so $\ndegconfspace{\br}$ is also finite.
\end{proof}

Denote by $T\confspace{\br}$ the tangent orbibundle of $\confspace{\br}$. It is the orbispace whose atlas contains the single chart $(T\polspace{\br}, G, p)$, where the action of $G$ on $T\polspace{\br}$ is obtained by differentiating the action of $G$ on $\polspace{\br}$, and $p : T\polspace{\br} \rightarrow G \backslash T\polspace{\br}$ is the canonical projection to the quotient space. To a vector orbibundle is naturally associated a stratified vector bundle:

\begin{proposition}[Pflaum {\cite[\S 2.10]{Pfl03}}]
	Let $E$ be a vector orbibundle. Let $(\tilde{E}, G, p)$ be a local orbibundle chart of $E$ and $(\tilde{U}, G, \pi)$ the associated orbispace chart. For $x \in \tilde{U}$, denote by $G_x$ the isotropy subgroup of the action at $x$, and let $\tilde{E}_x^{G_x}$ be the linear subspace of $G_x$-invariant elements of the fiber $\tilde{E}_x$.
	\begin{enumerate}
		\item If $S$ is a stratum of $\tilde{U}$ (in the stratification by isotropy type), then the space
		\[ \tilde{E}_S = \cup_{x \in \tilde{S}} \, \tilde{E}_x^{G_x} \]
		is a smooth vector bundle over $\tilde{S}$, and $\tilde{E}_S / G$ is a smooth vector bundle over $S = \tilde{S}/G$.
		\item These spaces define a stratification of $E^\strat_U = \cup_{S} \tilde{E}_S / G$.
	\end{enumerate}
	We call \emph{stratified vector bundle} associated to $E$ the stratified space 
	\[ E^\strat = \cup_U \, p(E^\strat_U) \subset E. \]
\end{proposition}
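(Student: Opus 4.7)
The plan is to use the slice theorem for proper Lie group actions to reduce both assertions to a local model in which they amount to elementary facts about equivariant vector bundles and their fixed-point subspaces. The guiding observation is that along a single stratum of $\tilde{U}$ the isotropy groups are all conjugate, so the subspaces $\tilde{E}_x^{G_x}$ vary equivariantly rather than erratically.

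I would begin by checking that $\dim \tilde{E}_x^{G_x}$ is locally constant on $\tilde{S}$: for $x$ and $y$ in the same stratum one has $G_y = g G_x g^{-1}$, and the action of $g$ provides a linear isomorphism $\tilde{E}_x \to \tilde{E}_y$ intertwining the two representations, so the fixed subspaces have the same dimension. Fixing $x_0 \in \tilde{S}$ and setting $H := G_{x_0}$, the slice theorem then produces a $G$-invariant neighborhood $W$ of the orbit $G x_0$ that is $G$-equivariantly diffeomorphic to $G \times_H V$ for a linear $H$-slice $V$; after possibly shrinking $W$, the equivariant vector bundle over it becomes $\tilde{E}|_W \cong G \times_H (V \times F)$, where $F := \tilde{E}_{x_0}$ carries the isotropy representation of $H$. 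In this local model, $\tilde{S} \cap W$ corresponds to $G \times_H V^H$ and, for every $y = [g,v]$ with $v \in V^H$, the isotropy is $gHg^{-1}$ while the fiber identifies with $F$ in such a way that $\tilde{E}_y^{G_y}$ becomes $F^H$. Consequently
\[ \tilde{E}_S \cap \tilde{E}|_W \;\cong\; G \times_H \bigl( V^H \times F^H \bigr), \]
which is visibly a smooth $G$-equivariant vector subbundle of $\tilde{E}|_{\tilde{S}}$. Since $G$ acts freely and properly on the stratum, the quotient $\tilde{E}_S/G$ is a smooth vector bundle over $S = \tilde{S}/G$, proving item~(1).

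For item~(2), the collection $\{\tilde{E}_S/G\}_S$ inherits local finiteness and the frontier condition directly from the stratification of $U$ by isotropy type, since the bundle projection $\tilde{E}_S/G \to S$ is continuous, surjective and open with connected fibers. The main obstacle I anticipate is the compatibility between different choices: two slice models at the same orbit, or two overlapping orbibundle charts, produce bundle atlases for $\tilde{E}_S/G$ which must be shown to be smoothly equivalent. This amounts to verifying that transition maps between such models intertwine the isotropy representations (so that the invariant subspace $F^H$ is sent to its counterpart at the target), after which the smooth vector bundle and stratified structures on $E^{\strat}$ become intrinsic and globally well-defined.
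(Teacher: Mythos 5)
The paper does not actually prove this proposition; it is quoted from Pflaum~\cite[\S 2.10]{Pfl03} as background, so there is no in-text argument to compare yours against. Judged on its own, your slice-theorem strategy is the right one and the local model
\[
\tilde{E}_S \cap \tilde{E}|_W \;\cong\; G \times_H \bigl(V^H \times F^H\bigr)
\]
is correctly derived, but the sentence that concludes item~(1) contains a genuine error. You write that ``$G$ acts freely and properly on the stratum,'' and deduce that $\tilde{E}_S/G$ is a smooth vector bundle over $S$. The action of $G$ on an orbit-type stratum $\tilde{S}$ is \emph{not} free in general: every point of $\tilde{S}$ has isotropy conjugate to $H=G_{x_0}$, and $H$ may well be nontrivial. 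In the very application of the paper, the stratum of lined polygons has isotropy $\SO{2}$ at every point. What makes the quotient a vector bundle is a different mechanism, one that is already visible in your own local model and that you should invoke instead: at each $y\in\tilde{S}$ the isotropy group $G_y$ acts \emph{trivially} on $\tilde{E}_y^{G_y}$ by the very definition of the invariant subspace, so in the model $G\times_H(V^H\times F^H)$ the group $H$ acts trivially on $V^H\times F^H$; the $G$-quotient therefore collapses the $G/H$ factor and yields the trivial bundle $V^H\times F^H\to V^H$. The correct statement is that the isotropy acts trivially on the fiber, not that the action is free; as written, the justification is false even though the conclusion it is meant to support is true.

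A secondary, milder point concerns item~(2): you assert that local finiteness and the frontier condition for $\{\tilde{E}_S/G\}_S$ are ``inherited directly'' from the base stratification because the projections are open with connected fibers. Openness and connectedness of fibers give local finiteness essentially for free, but the frontier condition requires the additional observation that if $S'\subset\overline{S}$ then for $y\in S'$ the isotropy group $G_y$ contains (a conjugate of) the generic isotropy $H$ of $S$, so $\tilde{E}_y^{G_y}\subseteq\tilde{E}_y^{H}$, which is the limit of the nearby fibers $\tilde{E}_x^{G_x}$. This is the reason the closure of $\tilde{E}_S/G$ meets $\tilde{E}_{S'}/G$ in a union of strata rather than in some smaller set; it deserves to be said rather than subsumed under ``directly inherited.''
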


In our case, the stratification takes the following simple form.

\begin{proposition}
	The stratified vector bundle associated to the vector orbibundle $T\confspace{\br}$ is given by the stratification
	\[ T\confspace{\br}^\strat = T\ndegconfspace{\br} \sqcup T\degconfspace{\br}. \]
	Moreover, $T\confspace{\br}^\strat$ is dense in $T\confspace{\br}$.
\end{proposition}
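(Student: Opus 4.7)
The plan is to apply the construction of the stratified vector bundle described in the preceding proposition directly to the unique orbibundle chart $(T\polspace{\br}, G, p)$ of $T\confspace{\br}$, with $G=\SO{3}$. The stratification of $\polspace{\br}$ by isotropy type is $\polspace{\br} = \ndegpolspace{\br} \sqcup \degpolspace{\br}$, as already computed in the proof of Proposition~\ref{p:stratification_confspace}. So on each of the two strata $\tilde{S}$ I must identify the sub-bundle $\tilde{E}_{\tilde{S}} = \cup_{\bu \in \tilde{S}} T_{\bu}^{G_{\bu}}\polspace{\br}$ of $G_{\bu}$-invariant tangent vectors, and then quotient by $G$.

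Over the open stratum $\ndegpolspace{\br}$ the isotropy groups $G_{\bu}$ are trivial, so $T_{\bu}^{G_{\bu}}\polspace{\br} = T_{\bu}\polspace{\br}$, and the resulting quotient is exactly the tangent bundle $T\ndegconfspace{\br}$ of the smooth manifold $\ndegconfspace{\br}$. The main (and really the only) computation is on the degenerate stratum. Fix $\bu \in \degpolspace{\br}$, so that all edge directions satisfy $u^i = \varepsilon_i u^0$ for some $u^0 \in \sphere{2}$ and signs $\varepsilon_i = \pm 1$, and $G_{\bu}$ is the $\SO{2}$ subgroup of rotations around the axis $\R u^0$. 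An arbitrary $\btildeX \in T_{\bu}\polspace{\br}$ satisfies $\scalarproduct{u^i}{\tilde{X}^i}=0$, which in this case forces every component $\tilde{X}^i$ to lie in the plane $(u^0)^\perp$. The tangent action of $G_{\bu}$ is componentwise rotation in that very plane, hence the only $G_{\bu}$-invariant element is the zero vector. It follows that $\tilde{E}_{\degpolspace{\br}}$ is the zero section over $\degpolspace{\br}$, and after quotienting by $G$ we obtain the zero section over the finite set $\degconfspace{\br}$, which is tautologically $T\degconfspace{\br}$. Assembling the two strata yields the announced decomposition.

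For the density statement, I reduce to the fact that $T\ndegconfspace{\br}$ is already dense in $T\confspace{\br}$: since $\ndegpolspace{\br}$ is open and dense in $\polspace{\br}$ by Proposition~\ref{p:stratification_confspace}, its tangent bundle $T\ndegpolspace{\br}$ is open and dense in $T\polspace{\br}$. The canonical projection $T\polspace{\br} \to T\polspace{\br}/G = T\confspace{\br}$ is a continuous surjection, and continuous surjections send dense subsets to dense subsets, so $T\ndegconfspace{\br} = p(T\ndegpolspace{\br})$ is dense in $T\confspace{\br}$. A fortiori $T\confspace{\br}^\strat \supseteq T\ndegconfspace{\br}$ is dense, proving the second assertion.

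I do not expect a serious obstacle here: the whole argument hinges on the single linear-algebra observation that a non-trivial rotation in a plane fixes only the origin of that plane, applied fiberwise. Everything else is bookkeeping about which vectors are admissible tangent vectors and which stratum each point belongs to, and both are supplied by the material already established in this section.
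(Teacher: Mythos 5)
Your fiberwise computation of the $G_{\bu}$-invariant tangent spaces coincides with the paper's proof: over the nondegenerate stratum the isotropy is trivial so the whole tangent space survives, and over the degenerate stratum every admissible tangent vector lies in the plane orthogonal to the common axis, on which a non-trivial rotation acts without fixed vectors, so only zero is invariant. The one place you diverge is the density claim: the paper simply cites the general fact that the tangent orbibundle of an orbispace is always a reduced orbibundle, whereas you give a direct elementary argument, pulling the known density of $\ndegpolspace{\br}$ in $\polspace{\br}$ up to $T\ndegpolspace{\br} \subset T\polspace{\br}$ (since the bundle projection is open and continuous) and then pushing it down through the quotient map $p$ (a continuous surjection sends dense sets to dense sets). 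Your argument is self-contained and spares the reader the notion of a reduced orbibundle, at the cost of not invoking the more structural reason the paper has in mind; both routes are correct.
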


\begin{proof}
	Take $(T\polspace{\br}, G, p)$ the single chart of the tangent orbibundle of $\confspace{\br}$, and let $\bu \in \polspace{\br}$. Recall that $\bX = (X^1, \dots, X^n) \in T_{\bu}\confspace{\br}$ satisfies $\scalarproduct{X^i}{u^i} = 0$ for all $1 \leq i \leq n$, and that the action of $g \in \SO{3}$ on $X$ is defined by
	\[ g \cdot \bX = (gX^1, \dots, gX^n). \]
	If $\bu$ is nondegenerate, then $G_{\bu} = \set{\identity{}}$ and hence $T_{\bu}\polspace{\br}^{G_{\bu}} = T_{\bu} \polspace{\br}$. If $\bu$ is degenerate, then $G_{\bu}$ is the subgroup of $\SO{3}$ consisting of rotations around the axis spanned by any $u^i$ (they are all collinear). Because $X^i$ is orthogonal to $u^i$, $gX^i = X^i$ holds if and only if $X^i = 0$, so we have $T_{\bu} \polspace{\br}^{G_{\bu}} = \set{0}$.
	
	Then define the vector bundles $E^\textnd = \cup_{\bu \in \ndegpolspace{\br}} T_{\bu}\polspace{\br}$ over $\ndegpolspace{\br}$ and $E^\textd = \degpolspace{\br} \times \set{0}$ over $\degpolspace{\br}$. Taking the quotient by $\SO{3}$, one obtains the stratification given in the proposition. The fact that $T\confspace{\br}^\strat$ is dense in $T\confspace{\br}$ comes from the fact that the tangent orbibundle of an orbispace is always a reduced orbibundle.
\end{proof}

A smooth section $\bX : \confspace{\br} \rightarrow T\confspace{\br}$ is a smooth stratified section of the tangent orbibundle $T\confspace{\br}$ if there exists a smooth $\SO{3}$-invariant section $\btildeX : \polspace{\br} \rightarrow T\polspace{\br}$ such that 
\[ p \circ \btildeX = \bX \circ \pi. \]
The space $\stratsmoothsections(T\confspace{\br})$ of smooth stratified sections of the tangent orbibundle is a $\smooth(X)$-module. In particular, the vector fields $\btildeX_{i,j}$ on $T\polspace{\br}$ defined above induce smooth stratified sections of the tangent orbibundle, that we denote by $\bendingfield{i,j}$ as before.

Fix $n-3$ disjoint diagonals $(d_1, \dots, d_{n-3})$ and consider the restrictions to the stratum $\ndegconfspace{\br}$ of the functions $F_1, \dots, F_ {n-3} \in \smooth(\confspace{\br})$ defined above. They define a classical integrable system on $\ndegconfspace{\br}$. Indeed, these maps already Poisson-commute pairwise in $\polspace{\br}$ according to Proposition~\ref{p:diagonal_maps_Poisson_commute}, and the description of singular points given by Proposition~\ref{p:description_of_singular_points} holds on $\ndegconfspace{\br}$ with the same proof. Actually, this description even holds on $\confspace{\br}$ because a degenerate polygon is necessarily a singular point of $F=(F_1, \dots, F_{n-3})$, and in the same time all its faces are degenerate. So in this sense, the integrable system $F=(f_1, \dots, f_{n-3})$ on $\confspace{\br}$ extends to the non-generic case using the notion of symplectic orbispace.

\section{Structure of the singular fibers}
\label{s:structure_of_singular_fibers}

The goal of this section is to prove that a singular fiber $N = \integralsmap^{-1}(c_1, \dots, c_{n-3})$ is generically a submanifold of $\confspace{\br}$. To do so, we will first prove that its lift $\tilde{N} = \tilde{\integralsmap}^{-1}(c_1, \dots, c_n)$ is a submanifold of $\polspace{\br}$ diffeomorphic to a product
\[ \tilde{M} = \SO{3} \times \cdots \times \SO{3} \times \torus{1} \times \cdots \times \torus{1} \times \sphere{2} \times \cdots \times \sphere{2}, \]
and such that the action of $\SO{3}$ on $\tilde{N}$ corresponds to the multiplication on the left on the $\SO{3}$ and $\sphere{2}$ components on $\tilde{M}$. Then it will suffice to prove that the resulting quotient space $\SO{3} \backslash \tilde{M}$ is a manifold.

\medskip

Suppose first that none of the diagonals $d_1, \dots, d_{n-3}$ vanishes on $\tilde{N}$ (following~\cite{HausKnut97}, we say that the polygons in the fiber $\tilde{N}$ ---or the fiber itself--- are \emph{prodigal}). Let $\face{i,j,k}$ be an adapted face which is degenerate on $\tilde{N}$ (recall that $\face{i,j,k}(\bu)$ keeps constant side lengths as $\bu$ varies in $\tilde{N}$, hence the degeneracy of $\face{i,j,k}(\bu)$ is independent of the choice of $\bu \in \tilde{N}$). If we cut the polygon $\bu$ along the line segment containing the degenerate face $\face{i,j,k}(\bu)$, we obtain three polygons 
\[ \begin{aligned}
	\bu_1 &= \left( - \frac{\diagonal{i,j}(\bu)}{\norm{\diagonal{i,j}(\bu)}}, u^i, \dots, u^{j-1} \right) \in \polspace{\br^1}, & \br^1 &= (\rho_1, r_i, \dots, r_{j-1}) \\
	\bu_2 &= \left( - \frac{\diagonal{j,k}(\bu)}{\norm{\diagonal{j,k}(\bu)}}, u^j, \dots, u^{k-1} \right) \in \polspace{\br^2}, & \br^2 &= (\rho_2, r_j, \dots, r_{k-1}) \\
	\bu_3 &= \left( \frac{\diagonal{i,k}(\bu)}{\norm{\diagonal{i,k}(\bu)}}, u^k, \dots, u^n, u^1, \dots, u^{i-1}\right) \in \polspace{\br^3}, & \br^3 &= (\rho_3, r_k, \dots, r_{i-1})
\end{aligned} \]
where $\rho_1, \rho_2, \rho_3 \in \set{r_1, \dots, r_n, c_1, \dots, c_{n-3}}$ do not depend on $\bu \in \tilde{N}$ (see Figure~\ref{f:splitting}). Note that some of these polygons might actually be digons. Because the diagonals $d_1, \dots, d_{n-3}$ are disjoint, for each $1 \leq p \leq 3$ such that $\card (\br^p) \geq 4$, they induce a system $\tilde{\integralsmap}_p$ on $\polspace{\br^p}$ such that $\bu_p$ take values in a fiber $\tilde{N}_p$ of $\tilde{\integralsmap}_p$ as $\bu$ varies in $\tilde{N}$. If $\polspace{\br^p}$ is just a space of digons or triangles, we set $\tilde{N}_p = \polspace{\br^p}$ and we consider that it is a ``regular fiber of the system'' (although there is actually no system defined on $\polspace{\br^p}$) in the sense that it is a manifold diffeomorphic to either $\sphere{2}$ (space of digons or degenerate triangles) or $\SO{3}$ (space of nondegenerate triangles). The map
\[ 	\varphi : \bu \in \tilde{N} \longmapsto (\bu_1, \bu_2, \bu_3) \in \tilde{N}_1 \times \tilde{N}_2 \times \tilde{N}_3 \]
is clearly one-to-one, and its image is the set
\begin{equation}
	\label{eq:image_of_cutting}
	S = \set{ (\bu_1, \bu_2, \bu_3) \in \tilde{N}_1 \times \tilde{N}_2 \times \tilde{N}_3 \mid \alpha_1 u_1^1 + \alpha_2 u_2^1 + \alpha_3 u_3^1 = 0 }
\end{equation}
where the triple $(\alpha_1, \alpha_2, \alpha_3) \neq (0, 0, 0)$ is determined by the relation of linear dependence between the sides of $\face{i,j,k}(\bu)$.

\begin{figure}
	\centering
	\def\svgwidth{0.8\textwidth}
		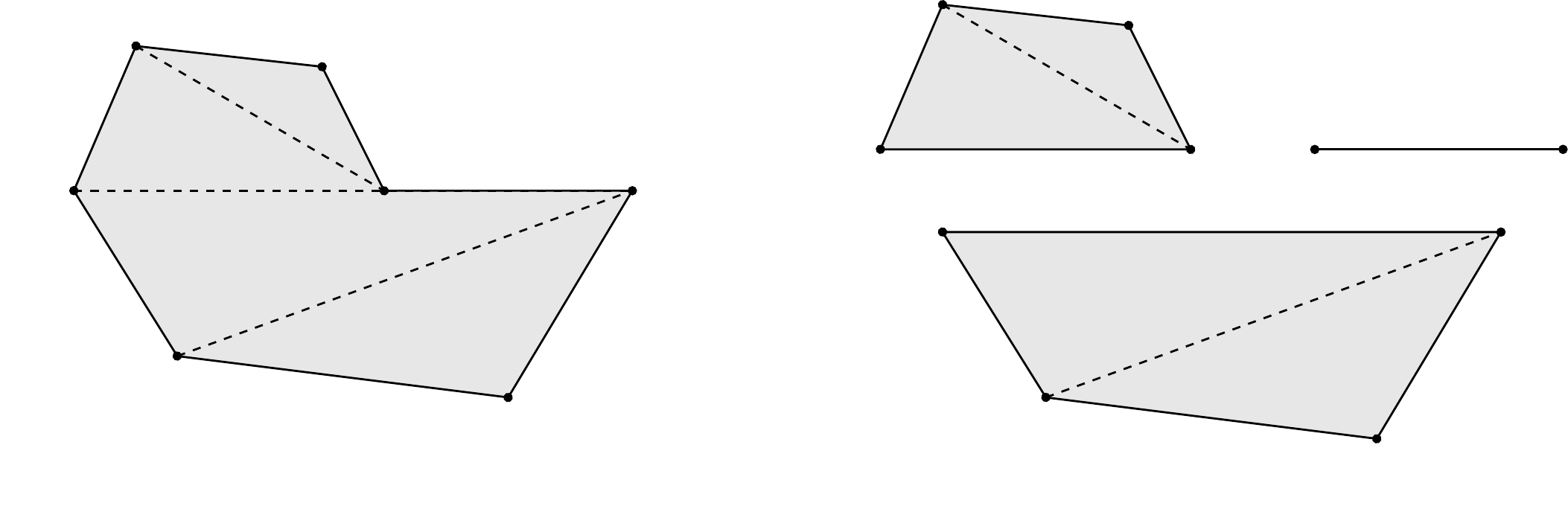
	\caption{Splitting of a singular polygon along a degenerate face}
	\label{f:splitting}
\end{figure}

\begin{proposition}
	\label{p:structure_prodigal_fiber}
	The fiber $\tilde{N}$ is a manifold diffeomorphic to either 
	\begin{itemize}
		\item the sphere $\sphere{2}$,
		\item a product $\SO{3} \times \torus{1} \times \cdots \times \torus{1}$ where each $\torus{1}$ component corresponds to a bending flow on $\tilde{N}$.
	\end{itemize}
\end{proposition}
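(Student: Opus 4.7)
The approach is induction on $n$, the number of edges of the polygons, using the decomposition $\varphi$ just introduced. For the base case $n=4$, a prodigal singular fiber of the single-diagonal system on a quadrilateral is obtained by cutting along the unique degenerate adapted face into two degenerate digons (each contributing a copy of $\sphere{2}$) and one triangle; the result is then $\sphere{2}$ or $\SO{3}$ depending on whether the triangular piece is itself degenerate or not.

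The inductive step begins by reformulating the set $S$ in~\eqref{eq:image_of_cutting} as a fibered product. Since $\face{i,j,k}$ is degenerate but its three edges are non-vanishing (by the prodigal hypothesis together with the positivity of the $r_i$), the three unit vectors $u_1^1, u_2^1, u_3^1$ are forced to lie on a common line in $\R^3$. Consequently the linear relation $\alpha_1 u_1^1 + \alpha_2 u_2^1 + \alpha_3 u_3^1 = 0$ is equivalent to $u_p^1 = \epsilon_p u^0$ for some $u^0 \in \sphere{2}$ and signs $\epsilon_p \in \set{\pm 1}$ determined by the geometry of the degenerate face. Hence
\[
S \;\cong\; \tilde{N}_1 \times_{\sphere{2}} \tilde{N}_2 \times_{\sphere{2}} \tilde{N}_3,
\]
where $\phi_p : \tilde{N}_p \to \sphere{2}$ is defined by $\bu_p \mapsto \epsilon_p u_p^1$.

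By the inductive hypothesis applied to each $\tilde{N}_p$ (which is a prodigal fiber on a strictly smaller polygon space), together with the known description of regular fibers, each $\tilde{N}_p$ is of one of three types: $\sphere{2}$ (digon or degenerate triangle), $\SO{3}$ (non-degenerate triangle), or $\SO{3} \times \torus{a_p}$ (larger piece). In all three cases $\phi_p$ is a submersion---the identity, the Hopf projection $\SO{3} \to \sphere{2}$, or the latter composed with projection to the $\SO{3}$ factor---so $S$ is a smooth manifold as a triple fibered product of submersions. I would then split into two cases. If every $\tilde{N}_p = \sphere{2}$, all three constraints pin down each $\bu_p$ from $u^0$, yielding $S \cong \sphere{2}$. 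Otherwise at least one $\tilde{N}_p$, say $\tilde{N}_1$, contains an $\SO{3}$-factor whose Hopf projection provides an explicit trivialization of the fibration $S \to \sphere{2}$. After this trivialization, the remaining pieces contribute their fibers over $\sphere{2}$ as further $\torus{1}$-factors, and any further $\SO{3}$-factors coming from the other $\tilde{N}_p$'s are absorbed using the identity $\SO{3} \times_{\sphere{2}} \SO{3} \cong \SO{3} \times \sphere{1}$. The output is $S \cong \SO{3} \times \torus{k}$ for an explicit $k$, with the diagonal $\SO{3}$-action on the three pieces becoming left multiplication on the first factor, as required.

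The step I expect to be most delicate is the last assertion of the proposition, that each $\torus{1}$ factor corresponds to a bending flow on $\tilde{N}$. This requires tracing, through $\varphi^{-1}$, each circle factor back to a rotation of the original polygon: the circles produced by induction inside $\tilde{N}_p$ correspond to bendings along diagonals lying strictly within $\bu_p$, while the ``new'' circles coming from the fiber products $\SO{3} \times_{\sphere{2}} \SO{3} \cong \SO{3} \times \sphere{1}$ should be identified with the bending flows along the non-vanishing adapted diagonals bounding the degenerate face $\face{i,j,k}$. The $\SO{3}$-equivariance of $\varphi$ under the diagonal action on the three pieces is what makes these identifications canonical and compatible with the global quotient to $\confspace{\br}$.
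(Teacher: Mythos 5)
Your approach is correct and yields the same conclusion, but the core argument is genuinely different from the paper's. Both proofs start from the same cutting decomposition $\varphi: \tilde{N} \hookrightarrow \tilde{N}_1 \times \tilde{N}_2 \times \tilde{N}_3$ with image $S$, but from there they diverge. The paper keeps $S$ as an implicitly defined constrained subset, endows it with an action of $G = \SO{3} \times \torus{r+\ell}$ built from the diagonal $\SO{3}$-action, the bending flows of the pieces, and the ``rotate one piece around the shared axis'' circles, and then proves that this action is \emph{free and transitive}. The diffeomorphism type follows as the statement that $S$ is a $G$-torsor, and the paper inducts on the \emph{number of degenerate adapted faces} to reduce to the case where all three pieces are regular. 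Your argument instead recognizes $S$ as a triple fibered product $\tilde{N}_1 \times_{\sphere{2}} \tilde{N}_2 \times_{\sphere{2}} \tilde{N}_3$ over the degenerate-axis sphere, checks that the anchor maps $\phi_p$ are submersions in each of the three cases, and computes the diffeomorphism type directly via $\SO{3} \times_{\sphere{2}} \SO{3} \cong \SO{3} \times \sphere{1}$ and $\SO{3} \times_{\sphere{2}} \sphere{2} \cong \SO{3}$; your induction is on the \emph{number of edges} $n$. Your route has the advantage of making the smoothness of $S$ (asserted but not argued in the paper) explicit, and the fibered-product description is arguably more illuminating; the paper's route avoids the identities for fibered products but requires the explicit verification of freeness and transitivity. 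The content of the ``delicate'' last step you flag — tracing each circle factor back to a bending (possibly inverse bending) flow on $\tilde{N}$ — is carried in the paper by the observation that the rotations of $\bu_p$ around $u_p^1$ are exactly the images under $\varphi$ of the (inverse) bending flows along the non-side edges of the degenerate face, and your identification matches this. One small thing worth making explicit in your write-up: when exactly one $\tilde{N}_p$ has an $\SO{3}$-factor, there is no new circle, which is why the count of extra $\torus{1}$'s is the number of $\SO{3}$-pieces minus one; this is the analogue of the paper's ``$\ell$'' ranging in $\{0,1,2\}$.
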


\begin{proof}
	If $\tilde{N}$ is a fiber consisting in degenerate polygons $\bu$, it is uniquely determined by the first edge $u^1$, and then it is diffeomorphic to the sphere $\sphere{2}$.
	
	Suppose now that $\tilde{N}$ contains nondegenerate polygons. Assume that $\tilde{N}_1$, $\tilde{N}_2$ and $\tilde{N}_3$ are regular fibers. Then $S$ and $\tilde{N}$ are manifolds and $\varphi$ is a diffeomorphism. Define three kinds of actions on $S$:
	\begin{enumerate}
		\item For $g \in \SO{3}$, consider the diagonal action
		\[ g \cdot (\bu_1, \bu_2, \bu_3) = (g \cdot \bu_1, g \cdot \bu_2, g \cdot \bu_3). \]
		\item For every $1 \leq p \leq 3$ such that $\card (\br^p) \geq 4$, define an action of some torus $\torus{q}$ on $S$ using the  bending flows of the system $\tilde{\integralsmap}_p$. Equivalently, it can be defined as the image by $\varphi$ of some bending flow of the system $\tilde{\integralsmap}$. Note that if this flows moves $u_p^1$, we replace it by its inverse flow, which fixes $u_p^1$, so that the action is well-defined on $S$.
		\item For every $\tilde{N}_p$ containing nondegenerate polygons but one, consider the action of $\torus{1}$ on $S$ by rotation of $\bu_p$ around its first edge $u_p^1$ (e.g. if only one $\tilde{N}_p$ contains nondegenerate polygons then there is no action of this kind, if only $\tilde{N}_1$ and $\tilde{N}_2$ contains nondegenerate polygon then consider only the rotation of $\bu_2$, etc.). It is also the image by $\varphi$ of some (possibly inverse) bending flow of $\tilde{\integralsmap}$.
	\end{enumerate}
	All those actions are commuting pairwise, and therefore induce an action of some group
	\[ G = \SO{3} \times \torus{r + \ell} \]
	on $S$, with $r \geq 0$ and $0 \leq \ell \leq 2$, such that each toric component can be interpreted as some bending flow of the system $\tilde{\integralsmap}$. Let us prove that this action is free and transitive.
	
	Let $x = (g, \theta_1, \theta_2, \dots, \theta_{r + \ell}) \in G$ such that $x \cdot (\bu_1, \bu_2, \bu_3) = (\bu_1, \bu_2, \bu_3)$ for some $(\bu_1, \bu_2, \bu_3) \in S$. By definition of our action,
	\[ x \cdot (\bu_1, \bu_2, \bu_3) = (g g_1 \cdot \varphi_1^{t_1}(\bu_1), g g_2 \cdot \varphi_2^{t_2}(\bu_2), g g_3 \cdot \varphi_3^{t_3}(\bu_3) ) \]
	where each $\varphi_p$ is a composition of bending flows, and $g_q$ is either the identity or the rotation with angle $\theta_{r+1}$ or $\theta_{r+2}$ around the axis spanned by $u_{q}^1 \in \sphere{2}$. Hence for each $1 \leq p \leq 3$ we have
	\[ \class{\bu_p} = \class{\varphi_p^{t_p}(\bu_p)} \]
	in the fiber $N_p$ of the moduli space $\confspace{\br^p}$. But on regular fibers, the bending flows act freely so we have $\varphi_p^{t_p} = \identity{}$, or equivalently
	\[ \theta_1 = \cdots = \theta_{r} = 0. \]
	By construction of our action, there is one nondegenerate $\bu_p$ such that $g_p = \identity{}$. Thus we have $g \cdot \bu_p = \bu_p$, which implies $g = \identity{}$. Therefore for all $1 \leq p \leq 3$, either $\bu_p$ is degenerate and then $g_p = \identity{}$ by definition of the action, or $\bu_p$ is nondegenerate and then $g_p \cdot \bu_p = \bu_p$ implies $g_p = \identity{}$. Then $\theta_{r+1}$ and $\theta_{r+2}$, when they exist, vanish. This proves that $x$ is the identity of $G$, so the action is free.
	
	Take now $\bu = (\bu_1, \bu_2, \bu_3)$ and $\bv = (\bv_1, \bv_2, \bv_3)$ in $S$, and let us suitably choose $x \in G$ so that $x \cdot \bu = \bv$. For convenience, let us assume $\ell = 2$ (the proof is similar for $0 \leq \ell \leq 1$). First, using the transitivity of the bending flows on regular fibers, we can fix $\theta_1, \dots, \theta_{r}$ such that
	\[ (\identity{}, \theta_1, \dots, \theta_{r}, 0, 0) \cdot \bu = (\varphi_1^{t_1}(\bu_1), \varphi_2^{t_2}(\bu_2), \varphi_3^{t_3}(\bu_3) ) \]
	satisfies $\class{\varphi_p^{t_p}(\bu_p)} = \class{\bv_p}$ in $N_p$ for all $1 \leq p \leq 3$. In particular, there exists $g \in \SO{3}$ such that $g \cdot \varphi_1^{t_1}(\bu_1) = \bv_1$. Denote by $(\bw_1, \bw_2, \bw_3)$ the triple
	\[ (g, \theta_1, \dots, \theta_{r}, 0, 0) \cdot \bu = (g \cdot \varphi_1^{t_1}(\bu_1), g \cdot \varphi_2^{t_2}(\bu_2), g \cdot \varphi_3^{t_3}(\bu_3) ). \]
	We have $\bw_1 = \bv_1$ and $\class{\bw_q} = \class{g \cdot \varphi_q^{t_q}(\bu_q)} = \class{\varphi_q^{t_q}(\bu_q)} = \class{\bv_q}$ for $2 \leq q \leq 3$. Hence there exists $g_q \in \SO{3}$ such that $g_q \cdot \bw_q = \bv_q$. Recall that elements in $S$ have their components linked by a fixed relation, namely for $q=1,2$, there exists $\varepsilon_q = \pm 1$ such that $u_1^1 = \varepsilon_q u_q^1$ and $v_1^1 = \varepsilon_q v_q^1$. On one hand, the flow $\varphi_q$ preserves the first edge, and $g_q \in \SO{3}$ preserves the orientation, so the first expression implies $w_1^1 = \varepsilon_q w_q^1$. On the other hand, the second expression implies $w_1^1 = \varepsilon_q g_qw_q^1$. Therefore $g_q w_q^1 = w_q^1$, which implies that $g_q$ is a rotation of axis $w_q^1$ with some angle $\alpha_q$ (possibly equal to $0$). Then we have 
	\[ (g, \theta_1, \dots, \theta_{r}, \alpha_2, \alpha_3) \cdot \bu = (\bw_1, g_2 \cdot \bw_2, g_3 \cdot \bw_3) = (\bv_1, \bv_2, \bv_3). \]
	Thus the action of $G$ on $S$ is transitive.
	
	This proves the proposition in the case where $\tilde{N}_1$, $\tilde{N}_2$ and $\tilde{N}_3$ are regular. We then extend it to the general case by induction on the number of degenerate faces of $\tilde{N}$.
\end{proof}

Suppose now that some diagonal $d_k = \diagonal{i,j}$ vanishes on $\tilde{N}$. Then the polygon $\bu$ can be seen as the wedge sum of two polygons with fewer sides
\[ \begin{aligned}
	\bu_1 &= (u^1, \dots, u^{i-1}, u^j, \dots, u^n) \in \polspace{\br^1}, & \br^1 &= (r_1, \dots, r_{i-1}, r_j, \dots, r_n), \\
	\bu_2 &= (u^i, \dots, u^{j-1}) \in \polspace{\br^2}, & \br^2 &= (r_i, \dots, r_{j-1}).
\end{aligned} \]
As before, because the diagonals $d_1, \dots, d_{n-3}$ are disjoint, we have two natural systems on $\polspace{\br^1}$ and $\polspace{\br^2}$ such that $(\bu_1, \bu_2)$ belongs to a product of fibers $\tilde{N}_1 \times \tilde{N}_2$ as $\bu$ varies in $\tilde{N}$. Repeating this process of splitting, we obtain a map
\[ \varphi : \bu \in \tilde{N} \longmapsto (\bu_1, \dots, \bu_q) \in \tilde{N}_1 \times \cdots \times \tilde{N}_q \]
one-to-one and onto where each $\tilde{N}_p$ is a prodigal fiber of some smaller system $\tilde{\integralsmap}_p$. Hence
\[ \tilde{N}_1 \times \cdots \times \tilde{N}_q \]
is a manifold and $\varphi^{-1}$ is an embedding, leading to the following proposition:
\begin{proposition}
	\label{p:structure_nonprodigal_fiber}
	Let $\class{\bu} \in \confspace{\br}$ be a singular point of some system $F : \confspace{\br} \rightarrow \R^{n-3}$ defined by a family of disjoint diagonals. Then the fiber $\tilde{N}$ containing $\bu$ is a manifold, diffeomorphic to a product
	\begin{equation}
		\label{eq:nonprodigal_fiber_model}
		\SO{3} \times \cdots \times \SO{3} \times \torus{1} \times \cdots \times \torus{1} \times \sphere{2} \times \cdots \times \sphere{2}
	\end{equation}
	The action of $\SO{3}$ on $\tilde{N}$ correspond by this diffeomorphism to the multiplication on the left on the $\SO{3}$ and $\sphere{2}$ components in the product.
\end{proposition}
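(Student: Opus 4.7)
The plan is to make rigorous the recursive splitting procedure that the text sketches just before the statement, and to verify that it assembles into a smooth bijection with the claimed product. Let $\tilde{N}=\tilde{\integralsmap}^{-1}(c_1,\dots,c_{n-3})$ and partition the diagonal maps into those for which $c_k=0$ (so $d_k$ vanishes identically on $\tilde{N}$) and those for which $c_k>0$. Along each vanishing diagonal $d_k=\diagonal{i,j}$, the closing condition $r_i u^i+\cdots+r_{j-1}u^{j-1}=d_k(\bu)=0$ lets us write $\bu$ as the wedge sum of two smaller closed polygons $\bu_1\in\polspace{\br^1}$ and $\bu_2\in\polspace{\br^2}$, with no additional constraint between them because there is no nonzero normalized diagonal to relate their first edges. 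Iterating the splitting along all vanishing diagonals yields a map $\varphi\colon \tilde{N}\to \tilde{N}_1\times\cdots\times\tilde{N}_q$, where each $\tilde{N}_p$ is a fiber of the restricted system on $\polspace{\br^p}$ in which \emph{no} diagonal vanishes, i.e., a prodigal fiber (some of the $\polspace{\br^p}$ may be degenerate digons/triangles, in which case we set $\tilde{N}_p=\polspace{\br^p}$ exactly as in the prodigal case). The map $\varphi$ is clearly bijective and smooth in one direction; its inverse is simply concatenation of edge tuples, which is smooth.

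Next I would apply Proposition~\ref{p:structure_prodigal_fiber} to each $\tilde{N}_p$: each factor is diffeomorphic either to $\sphere{2}$ (if $\br^p$ yields a digon/degenerate fiber) or to a product $\SO{3}\times\torus{1}\times\cdots\times\torus{1}$ in which $\SO{3}$ acts by left multiplication on the first factor and the torus factors correspond to bending flows. Taking the Cartesian product, $\tilde{N}_1\times\cdots\times\tilde{N}_q$ is diffeomorphic to a product of the required form
\[
\SO{3}\times\cdots\times\SO{3}\times\torus{1}\times\cdots\times\torus{1}\times\sphere{2}\times\cdots\times\sphere{2},
\]
and thus so is $\tilde{N}$ via $\varphi$.

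It remains to identify the $\SO{3}$-action on $\tilde{N}$ with the diagonal left multiplication on all the $\SO{3}$ and $\sphere{2}$ factors. This is where care is needed: the global $\SO{3}$-action on $\polspace{\br}$ rotates every edge $u^i$ simultaneously, hence under $\varphi$ it acts by the diagonal action $g\cdot(\bu_1,\dots,\bu_q)=(g\cdot\bu_1,\dots,g\cdot\bu_q)$ on the product of smaller spaces. On each $\tilde{N}_p$, the $\SO{3}$-action is by left multiplication on the $\SO{3}$ or $\sphere{2}$ factor given by Proposition~\ref{p:structure_prodigal_fiber} (the bending-flow torus factors commute with the ambient $\SO{3}$-action because bending flows are defined using intrinsic diagonal vectors). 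Hence the product action on $\tilde{N}_1\times\cdots\times\tilde{N}_q$ is by diagonal left multiplication on the homogeneous factors and trivial on the torus factors, which is precisely the claim.

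The main obstacle, and the step deserving the most attention, is the last one: ensuring that the splitting identifications are $\SO{3}$-equivariant and that the bending flows of the global system $\tilde{\integralsmap}$ really do correspond to bending flows on the individual $\tilde{N}_p$ factors (rather than couplings between them). This follows because the diagonals $d_1,\dots,d_{n-3}$ are pairwise disjoint, so each surviving diagonal $d_\ell$ on $\tilde{N}$ lies strictly inside exactly one of the subpolygons obtained from the recursive splitting, and its associated bending vector field $\btildeX_{d_\ell}$ has support disjoint from the other pieces. Once this is recorded, the finite induction on the number of vanishing diagonals (base case being Proposition~\ref{p:structure_prodigal_fiber}) closes the argument.
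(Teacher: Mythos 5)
Your proposal is correct and takes essentially the same route as the paper: split $\tilde{N}$ along the diagonals that vanish on it to obtain a bijection $\varphi$ onto a product $\tilde{N}_1\times\cdots\times\tilde{N}_q$ of prodigal fibers, observe that no extra constraint links the pieces (unlike the prodigal splitting, where the nonzero degenerate face produces a relation between the normalized first edges), apply Proposition~\ref{p:structure_prodigal_fiber} to each factor, and transport the $\SO{3}$-action. One small remark: the $\SO{3}$-equivariance of the diffeomorphism provided by Proposition~\ref{p:structure_prodigal_fiber} (left multiplication on the $\SO{3}$ or $\sphere{2}$ factor, trivial on the torus factors) is not stated in the proposition itself but is supplied by its proof, since the diffeomorphism is built from a free transitive action of $\SO{3}\times\torus{r+\ell}$ whose first factor is the diagonal $\SO{3}$-action; you rely on this (correctly), and it is worth noting that you are invoking the proof rather than the statement.
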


\begin{figure}
	\centering
	\def\svgwidth{0.6\textwidth}
	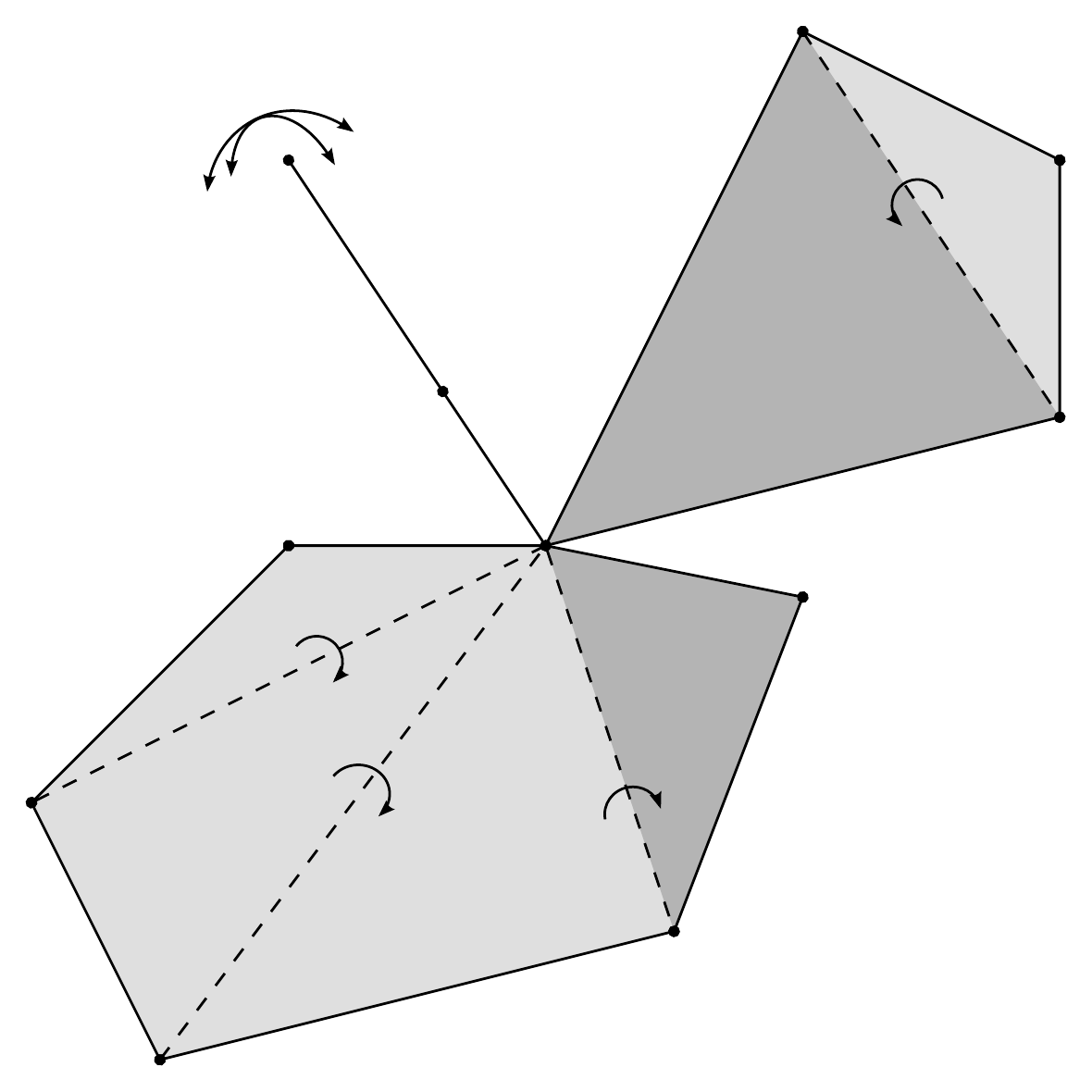
	\caption{Geometrical meaning of $\SO{3}$, $\torus{1}$ and $\sphere{2}$ components}
	\label{f:geometry}
\end{figure}

This decomposition can be explained geometrically as follows (see Figure~\ref{f:geometry}). Consider a polygon $\bu$ in the fiber $\tilde{N}$. If some of its diagonals vanish, it can be seen as a wedge product of prodigal polygons with fewer sides. If one of these smaller polygons is degenerate, one can rotate it around the origin without changing the diagonal lengths of the whole polygon $\bu$: this is the meaning of the corresponding $\sphere{2}$ component. Similarly, a nondegenerate smaller polygon can be rotated around the origin. Once one chooses a face of this polygon as a reference, this rotation is uniquely determined by an element of $\SO{3}$. Applying to $\bu$ those two transformations and the bending flows of each smaller polygon (the $\torus{1}$ components), one can obtain any other polygon in $\tilde{N}$.

\begin{remark}
	It was pointed out to us by the referee that the above decompostion of non-prodigal polygons into wedge sums of prodigal polygons is closely related to the toric manifold constructed by Kamiyama and Yoshida in \cite{kamiyama2002symplectic}. Let us recall briefly their construction. Define an equivalence relation on $\polspace{\br}$ (with the caterpillar configuration) by setting $\bu \sim \bv$ if the following two conditions are satisfied:
	\begin{enumerate}
		\item $\bu$ and $\bv$ are in the same fiber of $\tilde{F}$ (so in particular the same diagonals vanish in $\bu$ and $\bm{v}$),
		\item if $\bu=(\bu_1, \dots, \bu_q)$ and $\bv = (\bv_1, \dots, \bv_q)$ are the decompositions of $\bu$ and $\bv$ into prodigal polygons, then there exists $(g_1, \dots, g_q) \in \SO{3}^q$ such that for any $1 \leq i \leq q$ we have $g_i \cdot \bu_i = \bv_i$.
	\end{enumerate}
	Then the quotient $V = \polspace{\br} / \sim$ is a symplectic toric manifold whose momentum map has same image as $F$ in $\R^{n-3}$ and we have a natural projection $p : \confspace{\br} \rightarrow V$.
	
	Let $\tilde{N}$ be a fiber of $\tilde{F}$. Consider the diffeomorphism \ref{eq:nonprodigal_fiber_model} given by the above proposition that identifies a polygon $\bu$ with $(\bm{g}, \bm{\theta}, \bm{v}) \in (\SO{3})^p \times \torus{q} \times (\sphere{2})^k$. From the diagonal actions of $\SO{3}^p$ on itself and of $\SO{3}^k$ on $(\sphere{2})^k$ we define an action of $\SO{3}^{p+k}$ on $\tilde{N}$ by:
	\[ (\bm{h}, \bm{h'}) \cdot \bu = (\bm{h} \cdot \bm{g}, \bm{\theta}, \bm{h'} \cdot \bm{v}) \]
	for any $(\bm{h}, \bm{h'}) \in \SO{3}^p \times \SO{3}^k = \SO{3}^{p+k}$. Then the orbits for this action are exactly the elements in $p(N) \subset V$.
\end{remark}

Now we would like to determine the structure of the corresponding fiber $N$ in the moduli space $\confspace{\br}$. If $\tilde{N}$ contains at least one $\SO{3}$ component, then considering a polygon $\bu \in \tilde{N}$ up to isometric transformation is equivalent to fixing a given face of a nondegenerate polygon forming $\bu$. So the fiber $N$ should be diffeomorphic to the same product as $\tilde{N}$ but with one $\SO{3}$ component removed. However, if the decomposition of $\tilde{N}$ does not contain any $\SO{3}$ component, or equivalently if $\tilde{N}$ contains only polygons which are wedge sums of degenerate polygons, then the structure of $N$ is much less obvious. We will then distinguish those two cases, saying that: 
\begin{itemize}
	\item $\tilde{N}$ is of type I if there is at least one $\SO{3}$ component after reduction,
	\item $\tilde{N}$ is of type II if there are only $\sphere{2}$ components after reduction.
\end{itemize}
Now we can formulate the following result:

\begin{theorem}
	\label{t:singular_fibers_are_manifolds}
	Let $N$ be a singular fiber of the Hamiltonian integrable system $F = (F_1, \dots, F_{n-3})$ on $\confspace{\br}$. Denote by $\tilde{N}$ the corresponding fiber in $\polspace{\br}$.
	\begin{itemize}
		\item If $\tilde{N}$ is of type I, then $N$ is a manifold diffeomorphic to \[ \SO{3} \times \cdots \times \SO{3} \times \torus{1} \times \cdots \times \torus{1} \times \sphere{2} \times \cdots \times \sphere{2}. \]
		In particular it is an homogeneous manifold.
		\item If $\tilde{N}$ is of type II, then $N$ is an orbispace whose associated stratification is 
		\[ N = N^\textnd \sqcup N^\textd, \]
		where $N^\textnd$ (resp. $N^\textd$) is the manifold $N \cap \ndegconfspace{\br}$ consisting in configurations of nondegenerate polygons (resp. the manifold $N \cap \degconfspace{\br}$ consisting in configurations of degenerate polygons).
	\end{itemize}
\end{theorem}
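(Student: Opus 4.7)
The plan is to leverage Proposition~\ref{p:structure_nonprodigal_fiber}, which provides a diffeomorphism $\varphi : \tilde{N} \to \tilde{M} := \SO{3}^{p} \times \torus{q} \times (\sphere{2})^{k}$ intertwining the $\SO{3}$-action on $\tilde{N}$ with the diagonal left action on the $\SO{3}$ and $\sphere{2}$ factors of $\tilde{M}$ (and trivial on $\torus{q}$). By construction the two cases correspond to $p \geq 1$ (type~I) and $p = 0$ (type~II); it remains to analyse $N = \tilde{N}/\SO{3}$ in each case.

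For type~I, left multiplication of $\SO{3}$ on the first $\SO{3}$-factor of $\tilde{M}$ is free, so the diagonal action is free on all of $\tilde{M}$. The $\SO{3}$-invariant map
\[ (h_1, \dots, h_p, \theta, v_1, \dots, v_k) \longmapsto (h_1^{-1}h_2, \dots, h_1^{-1}h_p, \theta, h_1^{-1}v_1, \dots, h_1^{-1}v_k) \]
then descends to a diffeomorphism $N \cong \SO{3}^{p-1} \times \torus{q} \times (\sphere{2})^{k}$, which is homogeneous under the natural transitive action of $\SO{3}^{p-1} \times \torus{q} \times \SO{3}^{k}$.

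For type~II the analysis is subtler: on $\tilde{N} \cong \torus{q} \times (\sphere{2})^{k}$, the isotropy of $(\theta, v_1, \dots, v_k)$ is the joint stabilizer of $v_1, \dots, v_k$ in $\SO{3}$. A nontrivial rotation fixes only the two antipodal points on its axis, so this isotropy is trivial when the $v_i$ do not all lie on a common line through the origin, and equals the circle subgroup $\SO{2}$ of rotations about that common line otherwise. The crucial geometric identification, which I would extract by tracing the inductive construction of $\varphi$ in Proposition~\ref{p:structure_nonprodigal_fiber}, is that these two cases correspond respectively to $\bu \in \tilde{N}$ being nondegenerate or degenerate as a polygon in $\R^{3}$. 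Since $\SO{3}$ is compact, its action on $\tilde{N}$ is proper, so $(\tilde{N}, \SO{3}, \pi|_{\tilde{N}})$ is an orbispace chart for $N$, and the Pflaum stratification result recalled in \S\ref{s:extension_non-generic} gives the decomposition $N = N^{\textnd} \sqcup N^{\textd}$ by isotropy type. Each stratum is a smooth manifold: $N^{\textnd}$ is open with free $\SO{3}$-action, and $N^{\textd}$ has constant isotropy $\SO{2}$.

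The main obstacle I expect lies in type~II and consists in reconciling the intrinsic orbispace structure inherited by $N$ via $(\tilde{N}, \SO{3}, \pi|_{\tilde{N}})$ with the ambient orbispace structure of $\confspace{\br}$. Concretely, one must verify that the inclusion $\tilde{N} \hookrightarrow \polspace{\br}$ is $\SO{3}$-equivariant and preserves isotropy groups, so that $N \cap \ndegconfspace{\br}$ and $N \cap \degconfspace{\br}$ coincide exactly with the two strata identified above rather than some finer subdivision. This ultimately reduces to the geometric identification already mentioned: under $\varphi$, the $\sphere{2}$-factors parametrise the directions of the degenerate sub-polygons in the wedge decomposition of $\bu$, so $\bu$ is degenerate as a whole polygon if and only if all these directions lie on a single line through the origin.
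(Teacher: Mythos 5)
Your proposal is correct and follows essentially the same route as the paper: Type~I is handled by the explicit left-translation map and the freeness of the diagonal $\SO{3}$-action on a product with at least one $\SO{3}$ factor, and Type~II by the Pflaum stratification of $\tilde{N}$ by isotropy type. The two concerns you flag in Type~II are in fact not obstacles: since the $\SO{3}$-action on $\tilde{N}$ is literally the restriction of the ambient action on $\polspace{\br}$, the isotropy group of any $\bu\in\tilde{N}$ is the same whether computed in $\tilde{N}$ or in $\polspace{\br}$, so the intrinsic and ambient orbispace stratifications on $N$ agree automatically, and the geometric identification you isolate ($\bu$ degenerate $\iff$ all the $\sphere{2}$-directions of its degenerate sub-polygons are collinear) is exactly the content used in Proposition~\ref{p:stratification_confspace} to compute the ambient isotropy groups.
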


\begin{proof}
	Suppose $\tilde{N}$ is of type I. It is a homogeneous manifold with at least one $\SO{3}$ component. Recall that the action of $\SO{3}$ on
	\[ \tilde{N} \isomorphic \SO{3}^p \times \torus{q} \times (\sphere{2})^k \]
	is given by
	\[ g \cdot (g_1, \dots, g_p, \theta_1, \dots, \theta_q, v_1, \dots, v_k) = (g g_1, \dots, g g_p, \theta_1, \dots, \theta_q, g v_1, \dots, g v_k), \]
	the corresponding quotient space being $N$ by definition. $\SO{3}$ is compact, and the action is free because there is at least one $\SO{3}$ component, on which $g g_1 \neq g_1$ as long as $g \neq \identity{}$. Hence $N$ is a manifold. Let $M = \SO{3}^{p-1} \times \torus{q} \times (\sphere{2})^k$ be the same product as $\tilde{N}$ with one $\SO{3}$ component removed. The map $\tilde{\varphi} : \tilde{N} \rightarrow M$ defined by
	\[ \tilde{\varphi}(g, \theta, v) = ((g_1^{-1}g_2, \dots, g_1^{-1}g_p), \theta, g_1^{-1}v) \]
	is differentiable and onto. Moreover we have $\tilde{\varphi}(g, \theta, v) = \tilde{\varphi}(g', \theta', v')$ if and only if $\class{(g, \theta, v)} = \class{(g', \theta', v')}$ in $N$, so we obtain a diffeomorphism $\varphi : N \rightarrow M$.
	
	Now if $\tilde{N}$ is of type II, the action of $\SO{3}$ is not free anymore. For example if $g \in \SO{3}$ is a non-trivial rotation around some axis $v_0 \in \sphere{2}$, then one has $g \cdot (v_0, \dots, v_0) = (v_0, \dots, v_0)$ even though $g \neq \identity{}$. However, it is still the quotient space of the smooth action of a compact group on a manifold, so $N$ is an orbispace. The decomposition of $\polspace{\br}$ with respect to the isotropy type restricts to a decomposition
	\[ \tilde{N} = \tilde{N}^\textnd \sqcup \tilde{N}^\textd \]
	with $\tilde{N}^\textnd = \ndegpolspace{\br} \cap N$ and $\tilde{N}^\textd = \degpolspace{\br} \cap N$. The quotient of $\tilde{N}^\textnd$ (resp. of $\tilde{N}^\textd$) by the action of $\SO{3}$ can be naturally identified with $\ndegconfspace{\br} \cap N$ (resp. with $\degconfspace{\br} \cap N$), leading to the stratification stated in the theorem.
\end{proof}

\begin{remark}
	Note that $\polspace{\br}$ admits type II fibers only if $\br$ is not generic. Indeed, suppose the polygon $\bu \in \polspace{\br}$ belongs to some type II fiber
	\[ \tilde{N} \isomorphic \sphere{2} \times \cdots \times \sphere{2}. \]
	Then $\bu$ is a wedge sum of degenerate polygons. Up to rotating each component of this wedge sum, we can construct a polygon $u' \in \confspace{\br}$ which is degenerate, so $\br$ is not generic.
\end{remark}

\begin{remark}
	Let $\tilde{\iota} : \tilde{N} \hookrightarrow \polspace{\br}$ be the inclusion of some fiber $\tilde{N}$ in the space of 3D polygons with lengths $\br$. It is a smooth map compatible with the action of $\SO{3}$, so it induces a morphism $\iota : N \rightarrow \confspace{\br}$ of manifolds or orbispaces (depending on whether $\br$ is generic or not). Theorem~\ref{t:singular_fibers_are_manifolds} states that $N$ is a sub-object of $\confspace{\br}$ carrying the same structure.
\end{remark}

\section{Isotropicness of the fibers}
\label{s:isotropicness_of_fibers}

The goal of this section is to prove that any fiber $N$ of the system $F=(F_1, \dots, F_{n-3})$ defined by disjoint diagonals $d_1, \dots, d_{n-3}$ is isotropic, that is that the symplectic structure $\omega$ on $\confspace{\br}$ vanishes on the vectors tangent to $N$. Recall that we have the stratification
\[ \confspace{\br} = \ndegconfspace{\br} \sqcup \degconfspace{\br} \]
with $\ndegconfspace{\br}$ dense open submanifold of $\confspace{\br}$ and $\degconfspace{\br}$ finite union of points (empty when $\br$ is generic). The tangent space $T\confspace{\br}$ contains the dense stratified space
\[ T\confspace{\br}^\strat = T\ndegconfspace{\br} \sqcup T\degconfspace{\br}. \]
Hence it suffices to prove that
\[ \forall \class{\bu} \in N^\textnd,\ \forall X, Y \in T_{\class{\bu}} N^\textnd,\ \omega_{\class{\bu}}(X, Y) = 0, \]
where $\omega$ is the symplectic form induced on $\ndegconfspace{\br}$.
That is why we will use the following abuse of notation throughout this section: for purpose or clarity, we will write $N$ (respectively $\confspace{\br}$, $\tilde{N}$, $\polspace{\br}$) for $N^\textnd$ (respectively $\ndegconfspace{\br}$, $\tilde{N}^\textnd$, $\ndegpolspace{\br}$), as if $\br$ was generic.

\subsection{Generators of the tangent space} As a first step, it will be useful to exhibit, for any polygon $\bu$ in a singular fiber $\tilde{N}$, a family of vectors that spans the tangent space $T_{u} \tilde{N}$.

For $1 \leq i < j \leq n$ and $v \in \R^3$, set
\[ \btildeY_{i,j}^v(\bu) = (0, \dots, 0, v \vectprod u^i, \dots, v \vectprod u^{j-1}, 0, \dots, 0). \]
Recall that, for $\btildeY_{i,j}^v(\bu)$ to be a well-defined vector in $T_{\bu} \polspace{\br}$, the infinitesimal closing condition has to be verified, namely
\[ \sum_{\ell = i}^{j-1} r_\ell ( v \times u^\ell ) = v \times \diagonal{i,j}(\bu) = 0. \]
Note that this condition is automatically satisfied when $v = \diagonal{i, j}(\bu)$, and the vector obtained is exactly the image at $\bu$ of the bending vector field associated to $\diagonal{i, j}$.

The vector $\btildeY_{i,j}^v(\bu)$ is also well-defined when $\diagonal{i,j}(\bu) = 0$. Therefore if $\bu$ is the wedge sum of proper polygons $\bu_1, \dots, \bu_q$ (as in \S\ref{s:structure_of_singular_fibers}), then in particular we can define vectors $\btildeY_{\bu_1}^v, \dots, \btildeY_{\bu_q}^v$ corresponding to the rotation of each component of the wedge sum around the axis $v \in \R^3$.

\begin{lemma}
	\label{l:generators_of_a_singular_fiber}
	Let $\tilde{N}$ be a singular fiber of the system $\tilde{\integralsmap}$ defined by a family of disjoint diagonals $d_1, \dots, d_{n-3}$. Let $\bu \mapsto (\bu_1, \dots, \bu_q)$ be the decomposition of polygons in $\tilde{N}$ into wedge sums of prodigal polygons.
	
	For every $1 \leq j \leq q$, let $(v_{j,1}, v_{j,2}, v_{j,3})$ be a basis of $\R^3$. Then for every $\bu \in \tilde{N}$, the family
	\[ \set{ \btildeX_i, \btildeY_{\bu_j}^{v_{j,k}}(\bu) \mid 1 \leq i \leq n - 3,\ 1 \leq j \leq q,\ 1 \leq k \leq 3 } \]
	spans the tangent space $T_{\bu} \tilde{N}$.
\end{lemma}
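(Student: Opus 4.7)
The plan is to use the wedge decomposition $\varphi : \tilde{N} \to \tilde{N}_1 \times \cdots \times \tilde{N}_q$ along vanishing diagonals from \S\ref{s:structure_of_singular_fibers}, combined with the explicit transitive action of $\SO{3} \times \torus{r+\ell}$ on each prodigal fiber in the proof of Proposition~\ref{p:structure_prodigal_fiber}. The generators $\btildeY_{\bu_j}^{v_{j,k}}$ and $\btildeX_i$ are (infinitesimally) the building blocks of these actions, so spanning reduces to a dimension count once I have identified the role of each generator.

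First, Proposition~\ref{p:structure_nonprodigal_fiber} gives $\tilde{N} \isomorphic \SO{3}^s \times \torus{m} \times (\sphere{2})^t$, so $\dim T_{\bu} \tilde{N} = 3s + m + 2t$, and the differential of $\varphi$ identifies $T_{\bu}\tilde{N}$ with a subspace of $\bigoplus_j T_{\bu_j}\tilde{N}_j$.

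Next, I would track each generator under this identification. The vector $\btildeY_{\bu_j}^v(\bu)$ is supported on the $j$-th block of the wedge decomposition, so under $\varphi$ it contributes only to $T_{\bu_j}\tilde{N}_j$; for $v$ ranging over a basis of $\R^3$, these three vectors span the full 3-dimensional infinitesimal $\SO{3}$-action on $\bu_j$ when $\bu_j$ is nondegenerate (accounting for the $\SO{3}$ factor of $\tilde{N}_j$), and they span the 2-dimensional tangent space $T_{u_j^1}\sphere{2}$ when $\bu_j$ is degenerate (since the rotation around the common axis of its edges is trivial). Similarly, each bending field $\btildeX_i$ corresponding to a non-vanishing diagonal $d_i$ restricts under $\varphi$ to a bending field on the prodigal sub-fiber containing $d_i$, and these cover the toric factors of each $\tilde{N}_j$ that arise as bending flows of the sub-system $\tilde{\integralsmap}_j$.

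The main obstacle is to account for the toric factors that, in the proof of Proposition~\ref{p:structure_prodigal_fiber}, appear as ``rotations of a sub-polygon around its first edge'' rather than as flows of any sub-system $\tilde{\integralsmap}_j$. Such a first edge is, up to orientation, one of the diagonals $d_\ell = \diagonal{i,j}$ of the ambient system, so the corresponding rotation coincides with the bending flow $\bendingfield{i,j}$ modulo the $\SO{3}$-action already captured by the $\btildeY$'s, via the identity $\btildeX_{i,j} - \btildeX_{i,j}^\text{inv} \in T_{\bu}\orbit{\bu}$ recalled in \S\ref{s:geometry_of_polygons}. A final dimension count — each nondegenerate $\bu_j$ contributes a $3$-dimensional $\SO{3}$ factor via rotations, each degenerate one a $2$-dimensional $\sphere{2}$ factor, and the non-vanishing $\btildeX_i$'s produce all the toric directions — matches $3s + m + 2t$, so the proposed family spans $T_{\bu}\tilde{N}$.
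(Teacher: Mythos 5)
Your proposal follows essentially the same route as the paper: identify $T_{\bu}\tilde{N}$ via the diffeomorphism $\tilde{N}\isomorphic\tilde{N}_1\times\cdots\times\tilde{N}_q$ from \S\ref{s:structure_of_singular_fibers}, observe that the $\SO{3}$ and $\sphere{2}$ directions of each prodigal factor are realized by the $\btildeY_{\bu_j}^{v}$ and that the $\torus{1}$ directions are realized (up to $\btildeY$-corrections absorbed by the same generators) by the bending fields $\btildeX_i$, and conclude. Your extra care over the ``rotation around a first edge'' tori in Proposition~\ref{p:structure_prodigal_fiber} and the closing dimension count are welcome elaborations of a step the paper compresses into one line, but do not change the substance of the argument.
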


\begin{proof}
	According to \S\ref{s:structure_of_singular_fibers}, $\tilde{N}$ is diffeomorphic to a product
	$\tilde{N}_1 \times \cdots \times \tilde{N}_q$
	where each component of the product satisfies
	\[ \tilde{N}_i \isomorphic
	\begin{cases}
		\sphere{2} &\text{if } \bu_i \text{ is degenerate}, \\
		\SO{3} \times \torus{k_i} &\text{if } \bu_i \text{ is nondegenerate}.
	\end{cases} \]
	Fix $\bu \in \tilde{N}$. For $1 \leq i \leq q$, denote by $\pi_i$ the projection from $\tilde{N}$ onto $\tilde{N}_i$.
	
	If $\tilde{N}_i \isomorphic \sphere{2}$, then the diffeomorphism is provided by a map
	\[ \varphi^i : v \in \sphere{2} \mapsto (\varepsilon_{1} v, \dots, \varepsilon_{n_i} v) \in \tilde{N}_i \]
	with $\varepsilon_j \in \set{\pm 1}$. If $\bu_i = \varphi^i(v)$, the tangent space
	\[ T_v \sphere{2} = \set{ X \in \R^3 \mid \scalarproduct{X}{v} = 0 } \]
	is identified with the set 
	$\set{ X \vectprod v,\  X \in \R^3 }$
	which is the quotient of $\R^3$ by the relation $X \equiv X'$ if $X - X' = \alpha v$ for some $\alpha \in \R$.
	Under this identification, the push-forward $\varphi^i_\ast : \R^3 \rightarrow T_{\bu_i} \tilde{N}_i$ is defined by
	\[ \varphi^i_\ast(X) = X \vectprod \bu_i = (X \vectprod u_i^1, \dots, X \vectprod u_i^{n_i}). \]
	
	If $\tilde{N}_i \isomorphic \SO{3} \times \torus{k_i}$, recall that a diffeomorphism $\varphi^i : \SO{3} \times \torus{k_i} \rightarrow \tilde{N}_i$ is provided by
	$\varphi^i(g, t_1, \dots, t_{k_i}) = (g, t_1, \dots, t_{k_i}) \cdot \bu_i$
	where the action considered in the right-hand term of the expression above is the one defined in \S\ref{s:structure_of_singular_fibers}. In particular,
	$\varphi^i(g, 0, \dots, 0) = g \cdot \bu_i$.
	By the identification $T_{\identity{}}\SO{3} = \R^3$, we have for all $X \in \R^3$,
	\[ \varphi^i_\ast (X, 0, \dots, 0) = X \vectprod \bu_i, \]
	while $\varphi^i_\ast (0, \dots, 0, 1, 0, \dots, 0)$ is some normalized bending flow of the polygon $\bu_i$.
	
	Hence, under the diffeomorphism $\varphi = (\varphi^1 \circ \pi_1, \dots, \varphi^q \circ \pi_q)$ identifying $\tilde{N}$ with a product of $\SO{3}$, $\sphere{2}$ and $\torus{1}$, the image of a vector tangent to a $\torus{1}$ component is collinear to some bending flow $\btildeX_k(\bu)$, and a vector tangent to a $\SO{3}$ or $\sphere{2}$ component is mapped to some vector $\btildeY_{\bu_i}^{X}(\bu)$, $X \in \R^3$. Decomposing $X$ in the basis $(v_{i,1}, v_{i,2}, v_{i,3})$, this vector can be expressed as a linear combination of $\btildeY_{\bu_i}^{v_{i,1}}(\bu)$, $\btildeY_{\bu_i}^{v_{i,2}}(\bu)$ and $\btildeY_{\bu_i}^{v_{i,3}}(\bu)$.
\end{proof}

\subsection{Fibers without vanishing diagonals}

First we suppose that the fixed diagonals $d_1, \dots, d_{n-3}$ do not vanish on $N$, and we prove the isotropicness of $N$ by recursion on the number of degenerate adapted faces on $N$. More precisely, we approximate elements of $\tilde{N}$ by elements in different polygon spaces, belonging to fibers with a lower number of degenerate adapted faces.

\begin{lemma}
	\label{l:approx1}
	Let $N$ be a prodigal fiber of $\integralsmap$, and suppose some adapted face $\face{i,j,k}$ is degenerate on $N$.
	
	Then for any $\buzero \in \tilde{N}$, there exists a neighborhood $I$ of zero in $\R$, a sequence $(\bu_t)_{t \in I}$ of polygons in $\R^3$ and a sequence $(\br^t)_{t \in I}$ of positive side lengths such that:
	\begin{enumerate}[label*=(\thelemma.\arabic*)]
		\item \label{l:approx1:u_t_has_lengths_r} the polygon $\bu_t$ belongs to the space $\polspace{\br^t}$,
		\item \label{l:approx1:r_t_tends_to_r} $\br^t$ tends to $\br$ in $(\R_{>0})^n$ as $t$ tends to zero,
		\item \label{l:approx1:u_t_tends_to_u_0} $\but$ tends to $\buzero$ in $(\sphere{2})^n$ as $t$ tends to zero,
		\item \label{l:approx1:face_no_more_degenerate} for all $t \in I$, $t \neq 0$, the face $\face{i,j,k}(\but)$ is nondegenerate,
		\item \label{l:approx1:no_new_degenerate_face} if some face $\face{a,b,c}(\buzero)$ is nondegenerate, then $\face{a,b,c}(\but)$ is nondegenerate for any $t \in I$,
		\item \label{l:approx1:u_t_is_prodigal} for any $t \in I$, $\bu_t$ is a prodigal polygon.
	\end{enumerate}
	Moreover, if we denote by $\tilde{N}_t$ the fiber containing $\but$ for the function $\tilde{F}_t$ defined on $\polspace{\br^t}$ by the same choice of diagonal $d_1, \dots, d_{n-3}$ as for $\tilde{F}$, then:
	\begin{enumerate}[label*=(\thelemma.\arabic*), resume]
		\item \label{l:approx1:approximation_tangent_vectors} for any $\btildeX \in T_{\buzero} \tilde{N}$, there exists a sequence $(\btildeX_t)_{t \in I}$ that converges to $\btildeX$ in $\R^n$ as $t$ tends to zero and such that for any $t \in I$, $\btildeX_t \in T_{\but} \tilde{N}_t$.
	\end{enumerate}
\end{lemma}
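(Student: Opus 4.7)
My strategy is to construct the pair $(\but, \br^t)$ explicitly by ``opening up'' the degenerate face $\face{i,j,k}$ while preserving the rest of the polygon's geometry. By the decomposition from Proposition~\ref{p:structure_prodigal_fiber}, since $\tilde N$ is prodigal, I cut $\buzero$ into three sub-polygons $\bu_1^0, \bu_2^0, \bu_3^0$ along the line of $\face{i,j,k}(\buzero)$, whose three sides all lie along a common direction $u_* \in \sphere{2}$. I then fix a unit vector $w \in \sphere{2}$ orthogonal to $u_*$ and rotate $\bu_2^0$ by an angle of order $t$ around the axis $w$, keeping $\bu_1^0$ and $\bu_3^0$ unchanged. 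This makes $\diagonal{j,k}(\but)$ non-parallel to $u_*$, so that $\face{i,j,k}(\but)$ is a proper nondegenerate triangle for $t \neq 0$. To restore the global closing condition $\sum r_\ell^t u_t^\ell = 0$, I simultaneously perturb one or two polygon side lengths (choosing the modified edges inside a suitable sub-polygon, e.g.\ $\bu_3^0$, so as not to disturb the other adapted faces), producing $\br^t$. An implicit function theorem argument, applied to the closing condition viewed as a smooth equation in $(\bu, \br)$ near $(\buzero, \br)$, guarantees the existence of such a smooth family for $t$ in a small interval $I$ around $0$.

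Properties (\ref{l:approx1:u_t_has_lengths_r})--(\ref{l:approx1:u_t_tends_to_u_0}) then hold by construction, with both $\but - \buzero$ and $\br^t - \br$ of order $t$. Property (\ref{l:approx1:face_no_more_degenerate}) is built-in: for $t \neq 0$, the three sides of $\face{i,j,k}(\but)$ span a $2$-plane by choice of $w$. Properties (\ref{l:approx1:no_new_degenerate_face}) and (\ref{l:approx1:u_t_is_prodigal}) follow from the fact that the nondegeneracy of the other adapted faces at $\buzero$ and the non-vanishing of the fixed diagonals $d_1, \dots, d_{n-3}$ at $\buzero$ (which holds since $\tilde N$ is prodigal by assumption) are both open conditions, preserved under sufficiently small perturbations.

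For the tangent approximation (\ref{l:approx1:approximation_tangent_vectors}), I plan to exhibit a spanning family of $T_{\buzero}\tilde N$ and lift each element continuously to $T_{\but}\tilde N_t$. By Proposition~\ref{p:structure_prodigal_fiber}, $\tilde N$ is diffeomorphic to a product $\SO{3} \times \torus{r + \ell}$, so $T_{\buzero}\tilde N$ is generated by the bending vector fields $\btildeX_k(\buzero)$ along the fixed diagonals (the $\torus{r}$ directions), the infinitesimal generators of the global $\SO{3}$-action, and up to $\ell \leq 2$ additional rotations of sub-polygons around the common axis $u_*$ (the extra $\torus{\ell}$ factors created by the degeneracy of $\face{i,j,k}$). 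Each generator admits a canonical continuous lift: the formulas defining $\btildeX_k$ make sense verbatim on $\polspace{\br^t}$ and produce vectors tangent to $\tilde N_t$; the $\SO{3}$-generators are given by the same formulas on every polygon space; and along $\tilde N$, the extra rotations around $u_*$ coincide (up to sign) with normalized bending flows along those sides of $\face{i,j,k}$ that are fixed diagonals, which remain well-defined on $\tilde N_t$ because the relevant diagonals still do not vanish and converge continuously to their degenerate counterparts as $t \to 0$. Taking $\btildeX_t$ to be the corresponding combination of these lifts yields the required approximating sequence.

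The main obstacle is the careful choice of side lengths to perturb in the first step, so that no new adapted face becomes degenerate and no fixed diagonal vanishes---both of which could easily occur if the perturbation is chosen carelessly. This forces a small case analysis based on whether each side of $\face{i,j,k}$ is a polygon side or a fixed diagonal, and on how this face sits within the triangulation defined by $d_1, \dots, d_{n-3}$; the compensating edges must be found inside a sub-polygon $\bu_p^0$ with enough freedom not to collide with any other adapted face. Verifying that this can always be done in a way simultaneously compatible with all six combinatorial conditions is the technical heart of the argument.
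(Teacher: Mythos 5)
Your approach differs substantially from the paper's, and the difference matters. The paper's construction is a very local move: it fixes $x$ orthogonal to $\diagonal{i,j}(\buzero)$ and sets
\[ u_t^{j-1} = \frac{r_{j-1}u_0^{j-1} + tx}{\norm{r_{j-1}u_0^{j-1} + tx}}, \qquad u_t^{j} = \frac{r_{j}u_0^{j} - tx}{\norm{r_{j}u_0^{j} - tx}}, \]
with the corresponding side lengths $r_{j-1}^t, r_j^t$ and all other data unchanged. The point is that $r_{j-1}^t u_t^{j-1} + r_j^t u_t^j = r_{j-1}u_0^{j-1} + r_j u_0^j$, so the closing condition is \emph{identically} satisfied — no defect, no implicit function theorem. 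Only the $j$-th vertex moves, so $\diagonal{i,k}$ is unchanged while $\diagonal{i,j}(\but) = \diagonal{i,j}(\buzero) + tx$, which immediately gives nondegeneracy of $\face{i,j,k}(\but)$ for $t \ne 0$; everything else follows by continuity.

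Your proposal instead rotates the sub-chain $\bu_2^0$ (the directions $u^j, \dots, u^{k-1}$) by $R_t$ around an axis $w \perp u_*$, and then tries to restore the closing condition $\sum r_\ell^t u_t^\ell = 0$ by perturbing side lengths in, say, $\bu_3^0$. This leaves a genuine gap. The closing defect is $(R_t - I)\diagonal{j,k}(\buzero)$, which to second order spans the $2$-plane $\langle u_*, w \times u_* \rangle$; compensating by side-length changes alone (keeping directions $u_0^\ell$ fixed) means solving $\sum_{\ell}(r_\ell^t - r_\ell)u_0^\ell = -(R_t - I)\diagonal{j,k}(\buzero)$, which is three equations and requires the defect to lie in the span of the chosen $u_0^\ell$'s. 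If the compensating sub-polygon is a digon or a degenerate polygon (all sides along $u_*$), no such choice works; if both untouched sub-polygons are degenerate (which can happen even for nondegenerate $\buzero$, since only $\bu_2^0$ needs to be nondegenerate), the construction fails outright. Allowing the implicit function theorem to also perturb directions in $\bu_3^0$ could repair this, but then one has to re-verify (5.2.5) and (5.2.6) for the induced change of diagonals in $\bu_3^0$, and the proposal says "one or two side lengths," which is insufficient for a generic $3$-dimensional defect. You acknowledge this case analysis as "the technical heart of the argument" but do not carry it out, and it is not clear that it always succeeds in the form you propose. The vertex-move in the paper sidesteps all of this, because the perturbation affects exactly the two edges adjacent to one vertex and is closed by construction, so there is nothing left to compensate.

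The treatment of (5.2.7) — spanning $T_{\buzero}\tilde N$ by generators and lifting each one continuously to $T_{\but}\tilde N_t$ — is in the same spirit as the paper's proof (which appeals to Lemma~\ref{l:generators_of_a_singular_fiber} and continuity of $t \mapsto \btildeX_i(\but)$ and $t \mapsto \btildeY^v(\but)$), and would be fine once the construction of $(\but, \br^t)$ is nailed down.
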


\begin{proof}
	We construct these sequences explicitly. Fix $x \in \sphere{2}$ a vector orthogonal to $\diagonal{i,j}(\buzero)$ and set
	\[ \but = \left( u_0^1, \dots, u_0^{j-2}, \frac{r_{j-1}u_0^{j-1} + t x}{\norm{r_{j-1}u_0^{j-1} + t x}}, \frac{r_{j}u_0^{j} - t x}{\norm{r_{j}u_0^{j} - t x}}, u_0^{j+1}, \dots, u_0^n \right) \]
	and
	\[ \br^t = (r_1, \dots, r_{j-2}, \norm{r_{j-1}u_0^{j-1} + t x}, \norm{r_{j}u_0^{j} - t x}, r_{j+1}, \dots, r_n). \]
	Geometrically, the polygon $u_t$ is obtained by moving the $j$-th vertex of $\buzero$ in the direction $x \in \sphere{2}$ as illustrated in Figure~\ref{f:approx1}. Properties~\ref{l:approx1:u_t_has_lengths_r}, \ref{l:approx1:r_t_tends_to_r} and \ref{l:approx1:u_t_tends_to_u_0} are straightforward.
	
	For Property~\ref{l:approx1:face_no_more_degenerate}, remark that $\diagonal{i,k}(\but) = \diagonal{i,k}(\buzero)$ but 
	\[ \diagonal{i,j}(\but) = \diagonal{i,j}(\buzero) + t x. \]
	As $x \neq 0$ is orthogonal to $\diagonal{i,j}(\buzero) \neq 0$, we obtain that $\diagonal{i,j}(\but)$ is no more collinear to $\diagonal{i,k}(\but)$ when $t \neq 0$. For Property~\ref{l:approx1:no_new_degenerate_face}, we use the fact that the map $t \mapsto \diagonal{a,b}(\but) \vectprod \diagonal{a,c}(\but)$ is continuous, so if $\diagonal{a,b}(\buzero)$ and $\diagonal{a,c}(\buzero)$ are linearly independent, then $\diagonal{a,b}(\but)$ and $\diagonal{a,c}(\but)$ are linearly independent for any $t$ in some neighborhood of zero. The same argument is used for Property~\ref{l:approx1:u_t_is_prodigal}.
	
	Finally, for Property~\ref{l:approx2:approximation_tangent_vectors}, it sufficed to show that any vector in the family of generators given by Lemma~\ref{l:generators_of_a_singular_fiber} can be approximated as claimed. For the bending vector fields, this comes from the fact that the map $t \mapsto \btildeX_{i}(\but) \in \R^n$ is continuous for any $1 \leq i \leq n-3$. The same argument is used for $\btildeY^v_{1,n}(\buzero)$ once one remarks that $\btildeY^v(\but)$ is well-defined for any $t \in I$.
\end{proof}

\begin{figure}
	\centering
	\def\svgwidth{0.9\textwidth}
	{\tiny
		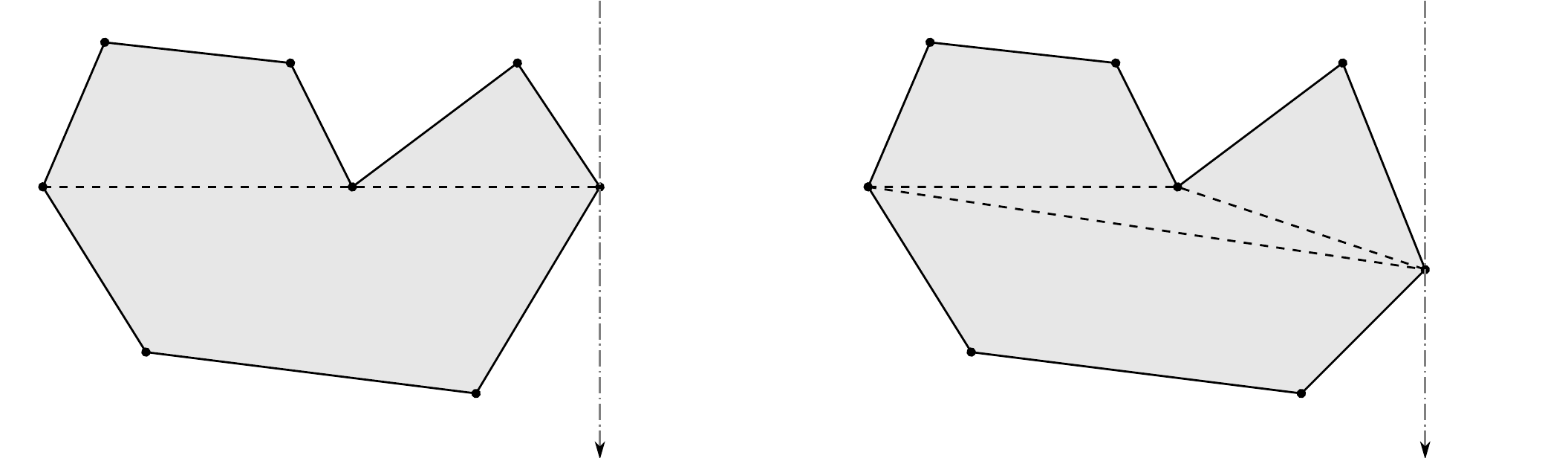 }
	\caption{Approximation of a polygon with a degenerate face}
	\label{f:approx1}
\end{figure}

\begin{proposition}
	\label{p:isotropicness_of_prodigal_fibers}
	For any side lengths $\br \in (\R_{>0})^n$ and any choice of diagonals on $\confspace{\br}$, the prodigal fibers of the associated integrable system are isotropic.
\end{proposition}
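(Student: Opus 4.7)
The proof is by induction on the number $\nu(\tilde{N})$ of degenerate adapted faces of the prodigal fiber $\tilde{N}$. Since $\polspace{\br} = \momentummap^{-1}(0)$ is the zero level of the momentum map, the kernel of $\omega|_{T\polspace{\br}}$ coincides with the tangent distribution to the $\SO{3}$-orbits; hence the isotropy of $N \subset \confspace{\br}$ is equivalent to showing that $\omega_{\bu}(\btildeX, \btildeY) = 0$ for every $\bu \in \tilde{N}$ and every $\btildeX, \btildeY \in T_{\bu} \tilde{N}$.

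When $\nu(\tilde{N}) = 0$, the fiber $N$ is regular and the global action--angle coordinates $\length{1}, \dots, \length{n-3}, \dihedralangle{1}, \dots, \dihedralangle{n-3}$ recalled in \S\ref{s:geometry_of_polygons} identify $N$ with the Liouville torus $\torus{n-3}$, of half the dimension of $\confspace{\br}$. Thus $N$ is Lagrangian, a fortiori isotropic, and the base case is settled.

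For the inductive step, assume the conclusion holds for all prodigal fibers with strictly fewer degenerate adapted faces. Fix a degenerate adapted face $\face{i,j,k}$ of $\tilde{N}$, a polygon $\buzero \in \tilde{N}$, and two tangent vectors $\btildeX, \btildeY \in T_{\buzero} \tilde{N}$. Lemma~\ref{l:approx1} furnishes sequences $\br^t \to \br$ in $(\R_{>0})^n$, $\but \to \buzero$ with $\but \in \polspace{\br^t}$, together with $\btildeX_t \to \btildeX$ and $\btildeY_t \to \btildeY$ in $(\R^3)^n$ such that $\btildeX_t, \btildeY_t \in T_{\but} \tilde{N}_t$, where $\tilde{N}_t$ denotes the corresponding fiber in $\polspace{\br^t}$. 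Properties~\ref{l:approx1:face_no_more_degenerate}, \ref{l:approx1:no_new_degenerate_face} and \ref{l:approx1:u_t_is_prodigal} ensure that $\tilde{N}_t$ is still prodigal and satisfies $\nu(\tilde{N}_t) < \nu(\tilde{N})$ for $t$ close enough to zero. The induction hypothesis applied to $\tilde{N}_t$ gives $\omega^t_{\but}(\btildeX_t, \btildeY_t) = 0$, where $\omega^t = \sum_{i} r_i^t \omega_i$ is the symplectic form on $(\sphere{2})^n$ attached to $\br^t$. Since $\omega^t$ depends continuously on $\br^t$ as a bilinear form on the fixed ambient space $(\R^3)^n$, and the vectors $\btildeX_t, \btildeY_t$ converge in that ambient space, passing to the limit $t \to 0$ yields $\omega_{\buzero}(\btildeX, \btildeY) = 0$.

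The main difficulty is conceptual rather than computational: one must exhibit a numerical invariant on prodigal fibers that is strictly decreased by a small perturbation of the side lengths, together with a perturbation of polygons that also controls the convergence of tangent vectors and of $\omega$. Lemma~\ref{l:approx1} delivers exactly this package, and once it is available the induction collapses to a continuity argument for $\omega^t$. In particular, the only substantive check beyond the lemma's properties is that the $\nu$-complexity really decreases, which is precisely what Property~\ref{l:approx1:face_no_more_degenerate} (the chosen face becomes nondegenerate) together with Property~\ref{l:approx1:no_new_degenerate_face} (no new degenerate face is created) provide.
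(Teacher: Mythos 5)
Your argument is correct and coincides with the paper's: induction on the number of degenerate adapted faces, with the base case being the Lagrangian regular fibers, Lemma~\ref{l:approx1} supplying the perturbation, and continuity of $\omega^t = \sum_i r_i^t\,\omega_i$ in $t$ closing the limit. The opening observation that the kernel of $\omega$ restricted to $\momentummap^{-1}(0)$ is the tangent space to the $\SO{3}$-orbits merely makes explicit the reduction to lifted tangent vectors that the paper sets up (via its abuse-of-notation convention) at the start of \S\ref{s:isotropicness_of_fibers}.
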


\begin{proof}
	We prove it by recursion on the number of degenerate face on the fiber $N$. If there are no degenerate face, then the fiber is regular and hence it is Lagrangian.
	
	Suppose now that $m > 0$ adapted faces are degenerate on $N$. Take $\buzero$ in $\tilde{N}$ and consider the approximation $(\but)_t$ of $\buzero$ provided by Lemma~\ref{l:approx1}. By Properties~\ref{l:approx1:face_no_more_degenerate} and \ref{l:approx1:no_new_degenerate_face}, the polygon $\but$ has at most $m-1$ degenerate faces when $t \neq 0$ and then the fiber $N_t$ containing $\class{\but}$ is isotropic.
	
	Let $\btildeX_1, \btildeX_2 \in T_{\buzero} \tilde{N}$, and $(\btildeX_{1,t})_t, (\btildeX'_{2,t})_t$ their approximations provided by Property~\ref{l:approx1:approximation_tangent_vectors}. Denote by $\omega^t$ the symplectic form on $\confspace{\br_t}$. Recall that it is the restriction of a two-form on $(\sphere{2})^n$ that satisfies
	\[ \omega^t_{\but}(\btildeX_{1,t}(\but), \btildeX_{2,t}(\but)) = \sum_{i \in I(p,q)} r^t_i \det(u_t^i, X_{1,t}^i, X_{2,t}^i) \]
	where $I(p,q)$ is a subset of $\integerinterval{1}{n-3}$ uniquely determined by the choice of diagonals $d_1, \dots, d_k$ (see proof of Proposition~\ref{p:diagonal_maps_Poisson_commute}). It follows that
	\[ \lim\limits_{t \to 0} \omega^t_{\but}(\btildeX_{1,t}(\but), \btildeX_{2,t}(\but)) = \omega_{\buzero}(\btildeX_1, \btildeX_2) \]
	Since $N_t$ is isotropic for $t \neq 0$ we have $\omega_{\class{\buzero}}(\bX_1, \bX_2) = 0$.
\end{proof}

\subsection{Fibers with vanishing diagonals}

We now prove the isotropicness in the general case, assuming that some of the disjoint diagonals $d_1, \dots, d_{n-3}$ vanish on $N$. We will prove the result by recursion on the number of vanishing diagonals.

\begin{lemma}
	\label{l:approx2}
	Let $N \subset \confspace{\br}$ be a singular fiber of $F$, and $\tilde{N}$ its lift in $\confspace{\br}$. If some diagonal $d_k$ vanishes on $\tilde{N}$, then there exists a dense subset $\tilde{S} \subset \tilde{N}$ such that for any $\buzero \in \tilde{S}$, there exists a neighborhood $I$ of zero in $\R$ and a sequence of polygons $(\but)_{t \in I}$ in $\confspace{\br}$ such that
	\begin{enumerate}[label*=(\thelemma.\arabic*)]
		\item \label{l:approx2:u_t_tends_to_u_0} $\but$ tends to $\buzero$ as $t$ tends to zero,
		\item \label{l:approx2:diagonal_no_more_zero} for all $t \in I$, $t \neq 0$, $d_k(\but) \neq 0$,
		\item \label{l:approx2:no_new_zero_diagonal} for all $1 \leq \ell \leq n - 3$, if $d_\ell(\buzero) \neq 0$ then $d_\ell(\but) \neq 0$ for all $t \in I$,
	\end{enumerate}
	Moreover, if we denote by $\tilde{N}_t$ the fiber of $\tilde{F}$ containing $\but$, then:
	\begin{enumerate}[label*=(\thelemma.\arabic*), resume]
		\item \label{l:approx2:approximation_tangent_vectors} for any $\btildeX \in T_{\buzero} \tilde{N}$, there exists a sequence $(\btildeX_t)_{t \in I}$ that converges to $\btildeX$ in $\R^n$ as $t$ tends to zero and such that for any $t \in I$, $\btildeX_t \in T_{\buzero} \tilde{N}_t$.
	\end{enumerate}
\end{lemma}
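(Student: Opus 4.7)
The plan is to perturb $\buzero$ along a one-parameter curve in $\polspace{\br}$ that opens the vanishing diagonal $d_k$ at first order, and then to exhibit, by means of that curve, explicit approximations for the generators of $T_{\buzero}\tilde{N}$ provided by Lemma~\ref{l:generators_of_a_singular_fiber}. I would take $\tilde{S} \subset \tilde{N}$ to be the open subset of polygons $\buzero$ for which every prodigal piece in the wedge-sum decomposition of $\buzero$ is non-degenerate and the spanning family of $T_{\buzero}\tilde{N}$ given by Lemma~\ref{l:generators_of_a_singular_fiber} is in general linear position; both conditions are open, and their complement in $\tilde{N}$ is nowhere dense, so $\tilde{S}$ is open and dense. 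For the construction of $\but$, regard $d_k$ as a smooth map $\polspace{\br} \to \R^3$ whose differential at $\buzero$ sends $\btildeX \mapsto \sum_{\ell=i}^{j-1} r_\ell \tilde{X}^\ell$. Non-degeneracy of the two sub-polygons $\bu^{(a)} = (u_0^i,\dots,u_0^{j-1})$ and $\bu^{(b)}$ adjacent to $d_k$ implies that each of the two partial-sum maps (on either side of $d_k$, restricted to vectors orthogonal to the corresponding $u_0^\ell$) surjects onto $\R^3$, and together with the global closing condition this shows that $d_k$ is a submersion at $\buzero$. The implicit function theorem then yields, for any prescribed $v^0 \in \R^3 \setminus \{0\}$, a smooth curve $(\but)_{t \in I}$ in $\polspace{\br}$ with $\bu_0 = \buzero$ and $d_k(\but) = t v^0 + O(t^2)$, and properties \ref{l:approx2:u_t_tends_to_u_0}--\ref{l:approx2:no_new_zero_diagonal} follow directly from this construction and the continuity of every $d_\ell$, $\ell \neq k$.

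For property \ref{l:approx2:approximation_tangent_vectors}, by linearity it suffices to approximate the generators of $T_{\buzero}\tilde{N}$. The bending fields $\btildeX_\ell$ are globally defined smooth sections tangent to every fiber of $\tilde{\integralsmap}$, so $\btildeX_\ell(\but) \in T_{\but}\tilde{N}_t$ converges trivially to $\btildeX_\ell(\buzero)$. The crucial point for the rotation generators $\btildeY^v_{\bu^{(c)}}(\buzero)$ is that the normalized bending field $\btildeB_k(\but) = \btildeX_k(\but)/\|d_k(\but)\|$ coincides with $\btildeY^{d_k(\but)/\|d_k(\but)\|}_{\bu^{(a)}}(\but)$; it lies in $T_{\but}\tilde{N}_t$ for $t \neq 0$ and converges to $\btildeY^{v^0/\|v^0\|}_{\bu^{(a)}}(\buzero)$ as $t \to 0$. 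Combining this with the whole-polygon rotations $\btildeY^w_{\bu}(\but) \in T_{\but}\tilde{N}_t$, whose limits are $\btildeY^w_{\bu^{(a)}}(\buzero) + \btildeY^w_{\bu^{(b)}}(\buzero)$, and with the analogous normalized bendings around the remaining vanishing diagonals of $\tilde{N}$, one obtains in the limit a family of vectors that spans $T_{\buzero}\tilde{N}$ under the genericity built into $\tilde{S}$.

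The main obstacle is approximating $\btildeY^w_{\bu^{(a)}}(\buzero)$ when $w$ is not parallel to $v^0$, since no single normalized bending has the correct limit direction. My proposed remedy is a small correction: set $\btildeX_t(w) = \btildeY^w_{\bu^{(a)}}(\but) + \btildeZ_t$, where $\btildeZ_t$ is supported on the complementary edges $[i,j-1]^c$, satisfies $\tilde{Z}_t^\ell \perp u_t^\ell$, restores the closing condition $\sum_{[i,j-1]^c} r_\ell \tilde{Z}_t^\ell = -w \vectprod d_k(\but) = O(t)$, and preserves each diagonal contained in $\bu^{(b)}$ (that is, $\sum_{m \in [p,q-1]} r_m \tilde{Z}_t^m \perp d_\ell(\but)$ for each $d_\ell = \diagonal{p,q} \subset \bu^{(b)}$). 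A linear dimension count allowed by the non-degeneracy of $\bu^{(b)}$ shows such $\btildeZ_t$ exists and can be chosen of size $O(t)$; hence $\btildeX_t(w) \in T_{\but}\tilde{N}_t$ and $\btildeX_t(w) \to \btildeY^w_{\bu^{(a)}}(\buzero)$. When more than one diagonal vanishes on $\tilde{N}$, the same correction is applied symmetrically within each remaining wedge component, and linearity in $\btildeX$ extends the approximation to every element of $T_{\buzero}\tilde{N}$.
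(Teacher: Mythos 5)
Your proposal takes a genuinely different route from the paper's, and the high-level strategy (perturb $\buzero$ so that $d_k$ opens at first order, then track a spanning family of tangent vectors) is sound. But the execution has a real gap at the crucial step, and one other point is too vague to carry the argument.

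The paper avoids your implicit-function-theorem perturbation entirely: it takes $\but$ to be a \emph{bending} of $\buzero$ along an adapted diagonal $\diagonal{k_1, k_2}(\buzero)$ chosen to straddle $d_k$. Because a bending preserves $\polspace{\br}$ and each diagonal length tautologically, the only diagonals it moves are the two that cross $\diagonal{k_1, k_2}$, and the key computation $\diagonal{p_0,p_1}(\but) = R^t\diagonal{k_1,p_1}(\buzero) - \diagonal{k_1,p_1}(\buzero)$ pins down the \emph{direction} in which $d_k$ opens, namely $\pm v_3 = \pm v_1\vectprod v_2/\norm{v_1\vectprod v_2}$. The rotation generator $\btildeY^{v_3}_{\bu_j}$ is then recovered as the literal limit of a \emph{normalized bending field} along the newly-opened diagonal $d_k(\but)$, and the rotations around $v_1,v_2$ are combinations of bending fields already tangent to every fiber. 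No correction term is needed: every approximating vector is an actual bending vector of $\tilde{\integralsmap}$.

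Your approach opens $d_k$ in an arbitrary direction $v^0$ by IFT, after which $\btildeY^w_{\bu^{(a)}}(\but)$ is no longer tangent to $\tilde{N}_t$ and must be patched by a correction $\btildeZ_t$. The existence of a suitable $\btildeZ_t$ of size $O(t)$ is the heart of the matter and is asserted, not proven: the constraints (closing condition, preservation of each $\tilde{f}_\ell$ for diagonals $d_\ell$ meeting $\bu^{(b)}$, including those that straddle $d_k$, which you silently fold into ``diagonals contained in $\bu^{(b)}$'' via the complement identity $\diagonal{q,p}=-\diagonal{p,q}$) form a $t$-dependent linear system, and one must check that the associated linear map is surjective at $t=0$ on the set $\tilde{S}$ you define before invoking a continuity or minimum-norm argument. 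A ``linear dimension count'' is not a substitute for this: equality of dimensions does not give surjectivity, and the surjectivity itself is exactly the kind of nondegeneracy that the paper encodes concretely in its definition $\tilde{S}=\{\bu : \diagonal{k_1,p_1}(\bu)\vectprod\diagonal{p_1,k_2}(\bu)\neq 0\}$. Your $\tilde{S}$ (``generators in general linear position'') is not a checkable condition as stated and leaves it unclear why the set is nonempty and dense. The proposal would become a proof if you (i) specified $\tilde{S}$ by an explicit open condition and proved its density, and (ii) proved the surjectivity needed for $\btildeZ_t$; but at that point you would essentially be reconstructing, by hand, the data that the paper gets for free from the bending construction.
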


\begin{proof}
	Let $\buzero = (\bu_{0,1}, \cdots, \bu_{0,q})$ be the decomposition of $\buzero$ into prodigal polygons. Up to a change of indices, we can assume that this decomposition is given by a sequence
	\[ 1 = p_0 < p_1 < \cdots < p_q = n + 1 \]
	such that $\bu_i = (u_0^{p_{i-1}}, \cdots, u_0^{p_i - 1})$ for any $0 \leq i \leq q$ and $d_k = \diagonal{p_0, p_1}$.
	
	The diagonal $d_k$ is the side of exactly two adapted faces $\face{p_0, k_1, p_1}$ and $\face{p_0, p_1, k'_2}$ with $1 < k_1 < p_1 < k'_2$.
	Suppose $k'_2 = p_j$ for some $j \geq 2$. Then $\diagonal{k'_2, k_1} = \diagonal{p_j, p_1}$ is the side of two adapted faces: one is $\face{p_0, p_1, k'_2}$ and the other is $\face{p_1, k''_2 , p_j}$ for some $p_1 < k''_2 < p_j$. Now $k''_2$ may be equal to $p_{j'}$ for some $2 < j' < j$, but iterating the previous construction, we obtain after a finite number of steps a sequence
	\[ 1 < k_1 < p_1 < k_2 \]
	such that $k_2 \neq p_j$ for all $1 \leq j \leq q$ and $\face{p_1, k_2, p_{j_0}}$ is an adapted face for some $j_0$.
	
	Define the subset $\tilde{S} = \{ \bu \in \tilde{N} \mid \diagonal{k_1, p_1}(\bu) \vectprod \diagonal{p_1, k_2}(\bu) \neq 0 \} \subset \tilde{N}$ and for fixed $\buzero \in \tilde{S}$, set
	\[ \but = (u_0^1, \dots, u_0^{p_1-2}, R^t u_0^{p_1-1}, R^t u_0^{p_1}, u_0^{p_1+1}, \dots, u_0^n) \]
	where $R^t$ is the rotation of angle $t$ around the axis
	\[ \diagonal{k_1,k_2}(\buzero) = r_{k_1} u_0^{k_1} + \cdots + r_{k_2 - 1} u_0^{k_2 - 1}. \]
	Remark that the family of polygon $\but$ is geometrically obtained by bending the polygon $\buzero$ along its diagonal $\diagonal{k_1, k_2}(\buzero)$, as illustrated in Figure~\ref{f:approx2}. From this definition Property~\ref{l:approx2:u_t_tends_to_u_0} is immediate and Property~\ref{l:approx2:no_new_zero_diagonal} follows from continuity of the map $t \mapsto d_\ell(\but) \in \R^3$. 
	
	The diagonals of $\but$ satisfy
	\[ \diagonal{p, q}(\but) = \diagonal{p, q}(\buzero) + \sum_{i \in I} r_i(R^t u_0^i - u_0^i) \]
	where $I = \integerinterval{p}{q - 1} \cap \integerinterval{k_1}{k_2 - 1}$.
	In particular, this implies
	\[ \diagonal{p_0, p_1}(\but) = 0 + \sum_{i=k_1}^{p_1 - 1} r_i(R^t u_0^i - u_0^i) = R^t \diagonal{k_1, p_1}(\buzero) - \diagonal{k_1, p_1}(\buzero). \]
	Since $\diagonal{k_1, p_1}(\buzero) \vectprod \diagonal{k_1, k_2}(\buzero) =  \diagonal{k_1, p_1}(\buzero) \vectprod \diagonal{p_1, k_2}(\buzero)$ does not vanish for $\buzero \in \tilde{S}$, the rotation $R^t$ does not act trivially on $\diagonal{k_1, p_1}(\buzero)$, whence Property~\ref{l:approx2:diagonal_no_more_zero}.
	
	Finally for Property~\ref{l:approx2:approximation_tangent_vectors} it suffices to show that we can approximate any vector among the generators given in Lemma~\ref{l:generators_of_a_singular_fiber}. It is clear that for any $1 \leq \ell \leq n-3$,
	\[ \lim\limits_{t \to 0} \btildeX_\ell(\but) = \btildeX_\ell(\buzero). \]
	The decomposition of $\but$, $t \neq 0$, into prodigal polygons is given by the sequence
	\[ 1 = p_0 < p_2 < p_3 < \cdots < p_q = n + 1. \]
	For any $3 \leq i \leq q$ and for any $v \in \R^3$, the vector $\btildeY^v_{p_{j-1},p_j}(\but)$ is tangent to the fiber $\tilde{N}_t$ containing $\but$ and we have
	\[ \lim\limits_{t \to 0} \btildeY^v_{p_{j-1},p_j}(\but) = \btildeY^v_{p_{j-1},p_j}(\buzero). \]
	For $v_1 = \diagonal{k_1, p_1}(\buzero)$ and $v_2 = \diagonal{p_1, k_2}(\buzero)$ we have
	\begin{align*}
		\btildeY^{v_1}_{p_0, p_1}(\buzero) &= \lim\limits_{t \to 0} (\btildeY_{k_1, p_1}^{\diagonal{k_1, p_1}(\but)}(\but) - \btildeY_{p_0, k_1}^{\diagonal{p_0, k_1}(\but)}(\but)), \\
		\btildeY^{v_1}_{p_1, p_2}(\buzero) &= \lim\limits_{t \to 0} (\btildeY_{k_1, p_2}^{\diagonal{k_1, p_2}(\but)}(\but) - \btildeY_{k_1, p_1}^{\diagonal{k_1, p_1}(\but)}(\but)), \\
		\btildeY^{v_2}_{p_0, p_1}(\buzero) &= \lim\limits_{t \to 0} (\btildeY_{p_0, k_2}^{\diagonal{p_0, k_2}(\but)}(\but) - \btildeY_{p_1, k_2}^{\diagonal{p_1, k_2}(\but)}(\but)), \\
		\btildeY^{v_2}_{p_1, p_2}(\buzero) &= \lim\limits_{t \to 0} (\btildeY_{p_1, k_2}^{\diagonal{p_1, k_2}(\but)}(\but) - \btildeY_{k_2, p_2}^{\diagonal{k_2, p_2}(\but)}(\but)),
	\end{align*}
	where in each expression the right-hand side is a limit of well-defined vectors tangent to the fibers $\tilde{N}_t$. Since $\buzero \in \tilde{S}$, the vectors $v_1$ and $v_2$ are linearly independent and together with
	\[ v_3 = \frac{v_1 \vectprod v_2}{\norm{v_1 \vectprod v_2}} \]
	they form a basis of $\R^3$. Note that
	\[ \begin{cases}
		\diagonal{p_0, p_1}(\but) = R^t x_1 - x_1 &\text{where } x_1 = \diagonal{k_1, p_1}(\buzero), \\
		\diagonal{p_1, p_2}(\but) = R^t x_2 - x_2 &\text{where } x_2 = \diagonal{p_1, k_2}(\buzero).
	\end{cases} \]
	Since $x_i \vectprod \diagonal{k_1, k_2}(\buzero) = \pm \diagonal{k_1, p_1}(\buzero) \vectprod \diagonal{p_1, k_2}(\buzero) = \pm v_1 \vectprod v_2$, we have
	\[ \lim\limits_{t \to 0} \frac{R^t x_i - x_i}{\norm{R^t x_i - x_i}} = \pm v_3 \]
	and thus the normalized bending vector fields associated to $\diagonal{p_0, p_1}(\but)$ and $\diagonal{p_1, p_2}(\but)$ converge to $\pm \btildeY^{v_3}_{p_0, p_1}(\buzero)$ and $\pm \btildeY^{v_3}_{p_1, p_2}$ respectively.
\end{proof}

\begin{figure}
	\centering
	\def\svgwidth{0.75\textwidth}
	{\tiny
		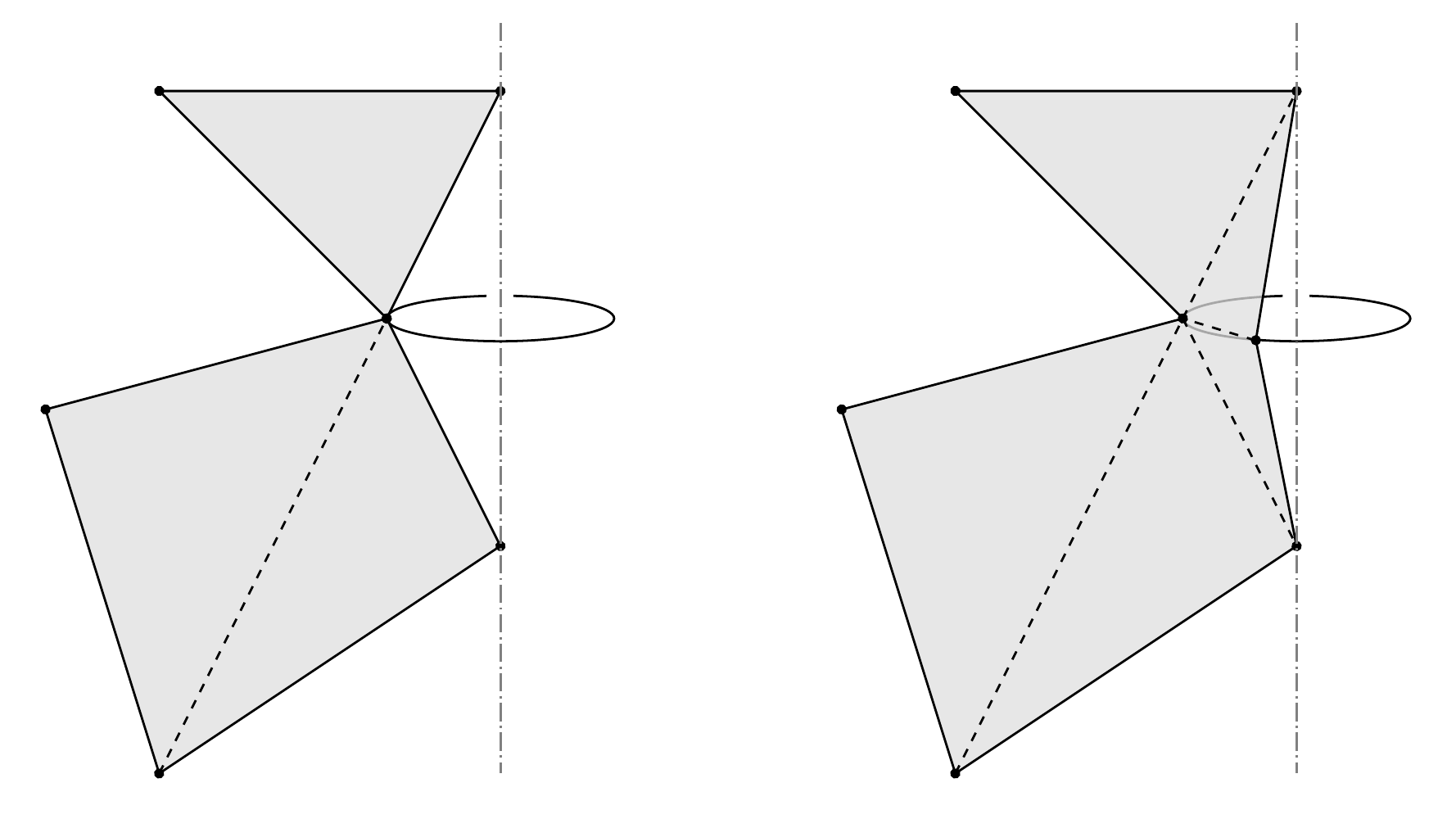 }
	\caption{Approximation of a polygon with a vanishing diagonal}
	\label{f:approx2}
\end{figure}

\begin{theorem}
	\label{t:singular_fibers_are_isotropic}
	Let $F$ be the integrable Hamiltonian system on $(\confspace{\br}, \omega)$ defined by a family of disjoint diagonals $(d_1, \dots, d_{n-3})$. Let $N$ be a singular fiber of $F$. Then $N$ is isotropic.
\end{theorem}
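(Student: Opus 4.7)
The plan is to proceed by induction on the number $m$ of diagonals among $(d_1,\dots,d_{n-3})$ that vanish on $\tilde{N}$. The base case $m=0$ corresponds to a prodigal fiber, and is exactly Proposition~\ref{p:isotropicness_of_prodigal_fibers}, which was itself handled by a separate induction on the number of degenerate adapted faces using Lemma~\ref{l:approx1}.

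For the inductive step, I would pick any diagonal $d_k$ that vanishes on $\tilde{N}$ and apply Lemma~\ref{l:approx2}: for every $\buzero$ in the dense subset $\tilde{S}\subset\tilde{N}$ supplied there, the approximating family $(\but)_{t\in I}$ lies in a fiber $\tilde{N}_t$ with strictly fewer vanishing diagonals, by Properties~\ref{l:approx2:diagonal_no_more_zero}--\ref{l:approx2:no_new_zero_diagonal}, so the moduli-space image $N_t$ is isotropic by the inductive hypothesis. Given $\btildeX_1,\btildeX_2\in T_{\buzero}\tilde{N}$, I would use Property~\ref{l:approx2:approximation_tangent_vectors} to produce sequences $\btildeX_{i,t}\in T_{\but}\tilde{N}_t$ with $\btildeX_{i,t}\to\btildeX_i$, and then pass to the limit in the symplectic form:
\[ \omega_{\buzero}(\btildeX_1,\btildeX_2)=\lim_{t\to 0}\omega_{\but}(\btildeX_{1,t},\btildeX_{2,t})=0. \]
Note that in contrast to the prodigal case of Proposition~\ref{p:isotropicness_of_prodigal_fibers}, no side length varies here, so a single smooth form $\omega$ on the ambient $(\sphere{2})^n$ is used throughout, and continuity plus bilinearity make the passage to the limit immediate.

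This shows $\tilde{N}$ is isotropic at every point of the dense subset $\tilde{S}$. To upgrade this to isotropicness at every point of $\tilde{N}$, I would invoke Theorem~\ref{t:singular_fibers_are_manifolds}: the pullback of $\omega$ to the smooth manifold $\tilde{N}$ is a smooth $2$-form vanishing on a dense subset, hence identically zero. Finally, isotropicness of $N\subset\confspace{\br}$ follows from that of $\tilde{N}\subset\polspace{\br}$ via the horizontal lift identification $T_{\class{\buzero}}\confspace{\br}\simeq\horizontaltangent{\buzero}{\polspace{\br}}$ recalled in Section~\ref{s:geometry_of_polygons}. The main obstacle I anticipate is ensuring that Property~\ref{l:approx2:approximation_tangent_vectors} genuinely covers an arbitrary tangent vector, and not merely the bending directions; this is precisely the role of the explicit generating family of Lemma~\ref{l:generators_of_a_singular_fiber} inside the proof of Lemma~\ref{l:approx2}, and also what forces the restriction from $\tilde{N}$ down to the open dense $\tilde{S}$ (on which the auxiliary vectors $\mu_{k_1,p_1}(\buzero)$ and $\mu_{p_1,k_2}(\buzero)$ remain linearly independent, so that the normalized bending vector field along $R^t\mu_{k_1,p_1}(\buzero)-\mu_{k_1,p_1}(\buzero)$ admits a well-defined limiting direction). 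The concluding density step is exactly what compensates for this restriction.
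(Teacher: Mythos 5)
Your proposal matches the paper's proof: induction on the number of vanishing diagonals, with base case Proposition~\ref{p:isotropicness_of_prodigal_fibers}, the inductive step handled by Lemma~\ref{l:approx2} together with the tangent-vector approximation of Property~\ref{l:approx2:approximation_tangent_vectors}, and a final density-plus-continuity argument to extend from $\tilde{S}$ to all of $\tilde{N}$. The observations you add (that the side lengths are fixed in Lemma~\ref{l:approx2} so a single ambient form $\omega$ suffices, and that the density upgrade uses continuity of the pullback form on the manifold $\tilde{N}$) are correct and simply make explicit what the paper leaves terse.
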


\begin{proof}
	It suffices to prove the result by induction on the number of vanishing diagonals. If no diagonal vanishes, the istropicness follows from Proposition~\ref{p:isotropicness_of_prodigal_fibers}.
	
	If $m > 0$ diagonal vanish, then we approximate any polygon $\buzero$ in $\tilde{S} \subset \tilde{N}$ by a sequence of polygons with at most $m-1$ diagonals using Lemma~\ref{l:approx2} and as in the proof of Proposition~\ref{p:isotropicness_of_prodigal_fibers} we show that the symplectic form vanishes on $T_{\class{\buzero}} N$. Since $\tilde{S}$ is dense in $\tilde{N}$ the result extends to any $\buzero \in \tilde{N}$ and then the fiber $N$ is isotropic.
\end{proof}

Let us conclude this section by characterizing the cases where these isotropic singular fibers have maximal dimension, and therefore are Lagrangian.

\begin{corollary}
	Let $N$ be a singular fiber of $F = (F_1, \dots, F_{n-3})$, and $N^\textnd$ the manifold consisting of the nondegenerate polygons in $N$ (for generic side lengths, $N^\textnd = N$). Consider the decomposition
	\[ \tilde{N} \isomorphic \tilde{N}_1 \times \cdots \times \tilde{N}_q \]
	of $\tilde{N}$ into prodigal fibers.
	
	Then $N^\textnd$ is a Lagrangian manifold if and only if each $\tilde{N}_i$ is either a space of digons, a space of nondegenerate triangles, or a regular fiber of $\tilde{F}_i$.
\end{corollary}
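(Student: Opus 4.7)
The plan is to reduce the question to a dimension count, using the isotropic property already obtained in Theorem~\ref{t:singular_fibers_are_isotropic}: since $N^\textnd$ is an isotropic submanifold of $\ndegconfspace{\br}$ of real dimension $2(n-3)$, it is Lagrangian if and only if $\dim N^\textnd = n - 3$. I would prove that this dimension equality is equivalent to the stated condition via the decomposition $\tilde{N} \isomorphic \tilde{N}_1 \times \cdots \times \tilde{N}_q$ into prodigal pieces.

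First, I would note that the decomposition is a genuine product: each vanishing diagonal $d_k = \diagonal{i,j}$ separates the closure condition $\sum r_\ell u^\ell = 0$ into two independent closure conditions in smaller moduli spaces, so that $\sum_i \card(\br^i) = n$ and $\tilde{N}$ sits in $\prod_i \polspace{\br^i}$ as the Cartesian product of the $\tilde{N}_i$. Under the diagonal $\SO{3}$-action one has $\dim N^\textnd = \dim \tilde{N} - 3$ in both types of Section~\ref{s:structure_of_singular_fibers}: in type~I the action is free on all of $\tilde{N}$, and in type~II (all $\tilde{N}_i \isomorphic \sphere{2}$) the stabilizer of a $q$-tuple in $(\sphere{2})^q$ is trivial as soon as the entries are not all parallel, so the action is free on the nondegenerate stratum. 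Next, I would bound $\dim \tilde{N}_i \leq n_i := \card(\br^i)$ via Proposition~\ref{p:structure_prodigal_fiber}, which presents $\tilde{N}_i$ as either $\sphere{2}$ or $\SO{3} \times \torus{s_i}$: the three configurations listed in the statement saturate this bound (a digon gives $\dim = 2 = n_i$, a nondegenerate triangle gives $\dim = 3 = n_i$, and a regular fiber for $n_i \geq 4$ is $\SO{3} \times \torus{n_i - 3}$ of dimension $n_i$), and every other configuration produces strict inequality.

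The main technical step, and the part I expect to require the most care, is to establish the strict inequality $s_i \leq n_i - 4$ when $\tilde{N}_i$ is a singular prodigal fiber (that is, $n_i \geq 4$ with at least one degenerate adapted face). I would argue this by induction on the number of degenerate adapted faces, using the splitting in the proof of Proposition~\ref{p:structure_prodigal_fiber}: splitting along one such face realises $\tilde{N}_i$ as an orbit of $\SO{3} \times \torus{r+\ell}$ inside a product $\tilde{N}^1 \times \tilde{N}^2 \times \tilde{N}^3$ with $\sum_p n^p = n_i + 3$, and the accounting $r \leq \sum_p (n^p - 3)^+$ together with $\ell \leq 2$ yields $s_i = r + \ell \leq n_i - 4$, with an additional strict drop contributed by each further degenerate adapted face. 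Summing these bounds then gives $\dim N^\textnd = \sum_i \dim \tilde{N}_i - 3 \leq \sum_i n_i - 3 = n - 3$, with equality exactly when each $\tilde{N}_i$ is a space of digons, a space of nondegenerate triangles, or a regular fiber of $\tilde{\integralsmap}_i$, as claimed.
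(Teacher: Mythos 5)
Your proof is correct and follows essentially the same route as the paper: reduce via Theorem~\ref{t:singular_fibers_are_isotropic} to the dimension count $\dim N^\textnd = n-3$, observe that the splitting into prodigal pieces gives $\sum_i \card(\br^i) = n$ and $\dim N^\textnd = \dim\tilde{N} - 3$, and then argue that $\dim\tilde{N}_i \le n_i$ with equality precisely in the three listed cases. One small caveat on the step you flagged: the naive combination $r \le \sum_p (n^p - 3)^+$ and $\ell \le 2$ alone does not yield $r + \ell \le n_i - 4$ when some $n^p = 2$ (a digon forces that piece to be degenerate, which must be used to cut $\ell$ down to compensate for the $+1$ that $(n^p-3)^+$ gains over $n^p-3$), but this is easily repaired and the paper itself only asserts the strict inequality $p_i < n_i$ without spelling out the accounting.
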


\begin{proof}
	By Theorem~\ref{t:singular_fibers_are_isotropic}, $N^\textnd$ is Lagrangian if and only if it has dimension $n-3$. Therefore we just have to compute the dimension of $N^\textnd$.
	
	Recall that $\tilde{N}_i$ is diffeomorphic to
	\[ 	\begin{cases}
			\sphere{2} &\text{if } \tilde{N}_i \text{ is a space of digons}, \\
			\SO{3} &\text{if } \tilde{N}_i \text{ is a space of nondegenerate triangles}, \\
			\SO{3} \times \torus{n_i - 3} &\text{if } \tilde{N}_i \text{ is a regular fiber of a system on a space of} \\
			& \text{polygons with } n_i \geq 4 \text{ sides}.
		\end{cases} \]
	In each of the above cases, the dimension of $\tilde{N}_i$ is equal to the number of sides $n_i$ of the polygons in $\tilde{N}_i$. Therefore, if each $\tilde{N}_i$ corresponds to one of the above cases, then the product $\tilde{N}$ has dimension $n_1 + \cdots + n_q = n$. It follows that the quotient $N^\textnd$ of the (free) action of $\SO{3}$ on the manifold $\tilde{N}^\textnd$ dense and open in $\tilde{N}$ has dimension $n-3$.
	
	On the other hand, $\tilde{N}_i$ is diffeomorphic to
	\[ 	\begin{cases}
	\sphere{2} &\text{if } \tilde{N}_i \text{ is a space of degenerate } n_i\text{-gons, } n_i \geq 3, \\
	\SO{3} \times \torus{p_i - 3} &\text{with } 0 \leq p_i < n_i \text{ if } \tilde{N}_i \text{ is a singular fiber of a } \\ &\text{system on a space of } n_i\text{-gons}
	\end{cases} \]
	In both cases, we have $\dim \tilde{N_i} < n_i$. Therefore, if such a component appears in the product $\tilde{N}$, we have $\dim N^\textnd < n - 3$.
\end{proof}

\section{Relation to Grassmannians and Gel'fand--Cetlin}
\label{s:relation_to_grassmannians}

\subsection{From Grassmannians to polygon spaces}

In this subsection, we recall the relation described by Hausmann and Knutson~\cite{HausKnut97} between the Grassmannian manifold of $2$-planes in $\C^n$ and the family of polygon spaces with $n$ sides.

Fix $n \geq 3$ and denote by $\twoframes{n}$ the manifold of $2$-frames in $\C^n$, that is the set of pairs $(\bz, \bw)$ of orthogonal unit vectors in $\C^n$, identified with a subspace of $n \times 2$ matrices. The right action of $U(2)$ on $\twoframes{n}$ by matrix multiplication corresponds to the orthogonal transformations of $\C^n$ leaving the plane spanned by $\bz$ and $\bw$ invariant. The quotient manifold
\[ \twograss{n} = \twoframes{2} / U(2) \]
can then be identified as the space of $2$-planes in $\C^n$.

Let $\Hh = \C \oplus j\C$ be the skew-field of quaternions. The Euclidean space $\R^3$ will be identified with the space $I\Hh = i\R \oplus j\R \oplus k\R$ of imaginary quaternions, with inner product induced by the canonical Hermitian structure on $\Hh = \C^2$. A $3$-dimensional polygon (based at the origin) will now be defined as a vector $\bq = (q^1, \dots, q^n) \in (I\Hh)^n$ satisfying the closing condition $q^1 + \cdots + q^n = 0$. Given $\br = (r_1, \dots, r_n) \in (\R_{>0})^n$, the space of 3d polygons with side lengths $\br$ is now defined as the manifold
\[ \polspace{\br} = \lbrace \bq \in (I\Hh)^n \mid q^1 + \cdots + q^n = 0,\ \norm{q^1} = r_1,\ \dots,\ \norm{q^n} = r_n \rbrace. \]
We will also consider the manifold $\polspace{(2)}$ of polygons $\bq$ with perimeter $\abs{\bq} = \norm{q^1} + \cdots + \norm{q^n}$ equal to $2$. Note that, at this point, we haven't excluded \emph{improper} polygons $\bq$, for which some side $q^i$ vanishes. We have
\[ \bigcup_{\br \in (\R_{>0})^n, \abs{\br} = 2} \polspace{\br} = \polspace{(2)}^\proper \subsetneq \polspace{(2)}. \]

Consider the application $\varphi : \Hh \rightarrow I\Hh$ defined by $\varphi(q) = \bar{q} i q$, or equivalently $\varphi(z + jw) = i(\abs{z}^2 - \abs{w}^2 + 2\bar{z}wj)$. It maps the $3$-sphere of radius $\sqrt{r}$ in $\Hh$ onto the $2$-sphere of radius $\br$ in $I\Hh$. Observe that, for any $\bz, \bw \in \C^n$, one has
\[ \sum_{\ell = 1}^{n} \varphi(z^\ell + j w^\ell) = i(\norm{\bz}^2 - \norm{\bw}^2 + 2 \scalarproduct{\bz}{\bw} j). \]
In particular, if $(\bz, \bw) \in \twoframes{n}$, then the $n$-tuple
\[ \tilde{\Phi}(\bz, \bw) = (\varphi(z^1 + jw^1), \dots, \varphi(z^n + jw^n)) \]
defines a polygon in $I\Hh$, with perimeter
\[ \abs{\tilde{\Phi}(\bz, \bw)} = \sum_{\ell = 1}^{n} \norm{\phi(z^\ell + jw^\ell)}_{I\Hh} = \sum_{\ell = 1}^{n} \norm{z^\ell + jw^\ell}^2_{\Hh} = \norm{\bz}^2 + \norm{\bw}^2 = 2. \]
We thus have defined a map $\tilde{\Phi} : \twoframes{n} \rightarrow \polspace{(2)}$ which is onto.

Let $\eta$ be the usual inclusion of $\Hh$ in the space of $2 \times 2$ complex matrices defined by
\[ \eta(z + jw) = \begin{pmatrix}
	z & w \\
	-\bar{w} & \bar{z}
\end{pmatrix}. \]
We define actions of $U(2)$ on $\Hh$ on the left and on the right as the pull-backs by $\eta$ of matrix multiplication (on the left and on the right). For these actions we have the relation: for any $q \in \Hh$ and $P \in U(2)$, 
\[ \varphi(q \cdot P) = P^{-1} \cdot \varphi(q) \cdot P. \]
Note that $\trace(\eta(q)\eta(q')^\ast) = 2\scalarproduct{q}{q'}_{\Hh}$ hence $q \mapsto P^{-1} \cdot q \cdot P$ belongs to the group $SO(I\Hh)$ of orthogonal transformations on $I\Hh$. It follows that $\tilde{\Phi}((\bz, \bw)P)$ lies in the orbit of $\tilde{\Phi}(\bz, \bw)$ for the diagonal action of $\SO{I\Hh}$ on $(I\Hh)^n$, and thus we obtain a well-defined map
\[ \Phi : \twograss{n} \longrightarrow \confspace{(2)} = \polspace{(2)} / \SO{I\Hh}. \]

Denote by $T_{U(n)}$ the maximal torus of diagonal matrices in $U(n)$, acting on $\twoframes{n}$ by multiplication. We have the following: 

\begin{proposition}[Hausmann, Knutson {\cite[Theorem 3.6]{HausKnut97}}]
	The restriction $\tilde{\Phi}^\proper$ of $\tilde{\Phi} : \twoframes{n} \rightarrow \polspace{(2)}$ above the space $\polspace{(2)}^\proper$ of proper polygons is smooth a principal $T_{U(n)}$-bundle.
\end{proposition}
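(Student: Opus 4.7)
The plan is to identify $\tilde{\Phi}^\proper$ with the quotient map of a smooth free proper action of $T_{U(n)}$ on $\tilde{\Phi}^{-1}(\polspace{(2)}^\proper)$; the principal bundle structure will then follow from the standard fact that a free action of a compact Lie group on a manifold produces a principal bundle.

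First, I would verify that the $T_{U(n)}$-action preserves $\tilde\Phi$. Identifying each row of $(\bz, \bw) \in \twoframes{n}$ with a quaternion $q^\ell \in \Hh$, the left multiplication by $T = \mathrm{diag}(t_1, \dots, t_n) \in T_{U(n)}$ corresponds to replacing each $q^\ell$ by $t_\ell q^\ell$. The direct computation
\[ \varphi(t_\ell q^\ell) = \overline{t_\ell q^\ell}\, i\, t_\ell q^\ell = \bar{q}^\ell \overline{t_\ell} i t_\ell q^\ell = \abs{t_\ell}^2 \varphi(q^\ell) = \varphi(q^\ell) \]
(using that $t_\ell \in \C$ commutes with $i$ and $\abs{t_\ell}=1$) then gives $\tilde\Phi(T \cdot (\bz,\bw)) = \tilde\Phi(\bz,\bw)$. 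Freeness on the proper locus is immediate: if $t_\ell q^\ell = q^\ell$ and $q^\ell \neq 0$, then $t_\ell = 1$.

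The hard part will be transitivity on each fiber, which reduces componentwise to a quaternionic Hopf-fibration argument for $\varphi \colon \Hh \setminus \{0\} \to I\Hh \setminus \{0\}$. I would take two nonzero $q, q' \in \Hh$ with $\varphi(q) = \varphi(q')$, set $s = q' q^{-1}$, and show $s \in U(1) \subset \C$: the norm identity $\abs{\varphi(q)} = \abs{q}^2$ forces $\abs{s}=1$, hence $\bar s = s^{-1}$; substituting into $\bar{q}\bar{s}isq = \bar{q}iq$ then gives $\bar{s}is = i$, equivalently $is = si$, so $s$ lies in the centralizer of $i$ in $\Hh$, which is the complex subfield $\C$. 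Applied to each row, any two proper $2$-frames with the same $\tilde\Phi$-image differ by multiplication of each row by a unit complex scalar, i.e., by an element of $T_{U(n)}$.

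To conclude, I would observe that $\tilde\Phi^\proper$ is surjective onto $\polspace{(2)}^\proper$: each side of a proper polygon lifts through $\varphi$ to an element of $\Hh$ of the appropriate norm, and the closing and perimeter conditions translate, via the identity $\sum_\ell \varphi(z^\ell + jw^\ell) = i(\norm{\bz}^2 - \norm{\bw}^2 + 2 \scalarproduct{\bz}{\bw}j)$ recalled in the excerpt, into the Stiefel conditions $\norm{\bz}=\norm{\bw}=1$ and $\scalarproduct{\bz}{\bw}=0$. At this point $T_{U(n)}$ acts smoothly and freely on $\tilde\Phi^{-1}(\polspace{(2)}^\proper)$ with orbits equal to the fibers of $\tilde\Phi^\proper$, and compactness of $T_{U(n)}$ makes local triviality automatic. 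The main obstacle throughout is the quaternionic Hopf identification in the transitivity step; everything else is essentially formal.
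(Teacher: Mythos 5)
The paper does not prove this proposition; it simply cites \cite[Theorem 3.6]{HausKnut97}, so there is no in-paper argument to compare against. Your reconstruction is the standard Hausmann--Knutson argument and it is essentially sound: invariance of $\varphi$ under complex-scalar multiplication, the Hopf-type computation $\bar s i s = i \Rightarrow s \in \C$ for the fiberwise transitivity, freeness off the improper locus, surjectivity via the identity $\sum_\ell \varphi(z^\ell + jw^\ell) = i(\norm{\bz}^2 - \norm{\bw}^2 + 2\scalarproduct{\bz}{\bw}j)$, and compactness of $T_{U(n)}$ for local triviality. One point to tighten: you assert that left multiplication of a row $(z^\ell, w^\ell)$ by $t_\ell$ corresponds to \emph{left} quaternion multiplication $q^\ell \mapsto t_\ell q^\ell$, and this is exactly what makes $\varphi$ invariant; but with the identification $q^\ell = z^\ell + j w^\ell$ literally as written in the paper, scaling both components by $t_\ell$ gives $t_\ell z^\ell + j t_\ell w^\ell = q^\ell t_\ell$ (\emph{right} multiplication), under which $\varphi(q^\ell t_\ell) = \bar t_\ell\, \varphi(q^\ell)\, t_\ell$ is only conjugate to $\varphi(q^\ell)$, not equal. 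The conventions work out (and your proof goes through verbatim) with the identification $q^\ell = z^\ell + w^\ell j$, which is also what makes the paper's displayed formula for $\varphi(z+jw)$ correct; you should make this explicit so the reader sees that the quaternion multiplication you are using is the one compatible with the row-scaling action of $T_{U(n)}$.
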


On can check that the action of $T_{U(n)}$ on $\twoframes{n}$ descends to an action on $\twograss{n}$. However this action is no longer effective: its center is the subspace $\Delta \isomorphic \sphere{1}$ of homothetic transformations of $T_{U(n)}$.

\begin{proposition}[Hausmann, Knutson {\cite[Theorem 3.9]{HausKnut97}}]
	\label{p:Phi_is_principal_bundle}
	The restriction $\Phi^\proper$ of $\Phi : \twograss{n} \rightarrow \confspace{(2)}$ above the space $\confspace{(2)}^\proper$ of (classes of) proper polygons is a smooth principal $(T_{U(n)}/\Delta)$-bundle.
\end{proposition}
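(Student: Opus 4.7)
The plan is to derive the principal $(T_{U(n)}/\Delta)$-bundle structure on $\Phi^\proper$ from the principal $T_{U(n)}$-bundle structure on $\tilde{\Phi}^\proper$ by passing simultaneously to the quotients induced by the right action of $U(2)$ on $\twoframes{n}$ and the diagonal action of $\SO{I\Hh}$ on $\polspace{(2)}$. First, I would check that the left multiplication by $T_{U(n)}$ on $\twoframes{n}$ commutes with the right $U(2)$-action, hence descends to an action on $\twograss{n}$. The subgroup $\Delta \isomorphic \sphere{1}$ acts on $\twoframes{n}$ by $\lambda \cdot (\bz,\bw) = (\lambda\bz, \lambda\bw) = (\bz,\bw)(\lambda I_2)$, a right-multiplication by a scalar in $U(2)$; therefore $\Delta$ acts trivially on $\twograss{n}$, producing an effective action of $T_{U(n)}/\Delta$. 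The $T_{U(n)}$-invariance of $\tilde{\Phi}$ (a consequence of Proposition~\ref{p:Phi_is_principal_bundle}'s predecessor) then immediately implies the $(T_{U(n)}/\Delta)$-invariance of $\Phi$.

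Next, I would show that the fibers of $\Phi^\proper$ are precisely the $(T_{U(n)}/\Delta)$-orbits. The essential input is that the conjugation map $U(2) \to \SO{I\Hh}$, $P \mapsto (q \mapsto P^{-1}qP)$, is surjective with kernel the scalars $\{\lambda I_2 : \lambda \in U(1)\}$ (the classical identification $PU(2) \isomorphic \SO{3}$). For transitivity: if $\Phi(\class{\bz,\bw}) = \Phi(\class{\bz',\bw'})$, then $\tilde{\Phi}(\bz',\bw') = g \cdot \tilde{\Phi}(\bz,\bw)$ for some $g \in \SO{I\Hh}$; choosing a lift $P \in U(2)$, the identity $\varphi(q \cdot P) = P^{-1} \varphi(q) P$ gives $\tilde{\Phi}(\bz',\bw') = \tilde{\Phi}((\bz,\bw)P)$, and the principal $T_{U(n)}$-bundle property yields $t \in T_{U(n)}$ with $(\bz',\bw') = t \cdot (\bz,\bw)P$, hence $\class{\bz',\bw'} = t \cdot \class{\bz,\bw}$ in $\twograss{n}$. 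For freeness: if $t \cdot (\bz,\bw) = (\bz,\bw)P$, the same computation forces the $\SO{I\Hh}$-image of $P$ to fix every $\varphi(q^\ell)$; when the polygon is proper and nondegenerate the $\varphi(q^\ell)$ span $I\Hh$, so $P = \lambda I_2$ for some $\lambda \in U(1)$, and the equation then becomes $t \cdot (\bz,\bw) = \lambda (\bz,\bw)$, which, together with the fact that no row $(z^\ell, w^\ell)$ vanishes, forces $t = \lambda I_n \in \Delta$.

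The smoothness and local triviality of $\Phi^\proper$ should follow by descending a local trivialization of $\tilde{\Phi}^\proper$ over any $\SO{I\Hh}$-saturated open subset through the free actions of the compact groups $U(2)$ (on the total space) and $\SO{I\Hh}$ (on the base), using the slice theorem to obtain compatible local sections. The main obstacle I anticipate lies in the freeness analysis at \emph{degenerate} proper polygons: there the $\varphi(q^\ell)$ span only a line, so the preimages in $U(2)$ of nontrivial rotations about that line are additional elements $P$ satisfying the defining equation, producing extra stabilizers for the $T_{U(n)}/\Delta$-action. These stabilizers exactly match the orbispace isotropy of $\confspace{(2)}^\proper$ along its degenerate locus, so handling this case cleanly requires formulating the statement either over the nondegenerate stratum or in the orbispace-principal-bundle framework of \S\ref{s:extension_non-generic}.
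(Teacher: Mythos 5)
The paper states this proposition as a citation to Hausmann--Knutson and does not supply a proof of its own, so there is no in-paper argument to compare against. Evaluated on its own merits, your strategy of descending the principal $T_{U(n)}$-bundle structure of $\tilde{\Phi}^\proper$ through the two compatible quotients (right $U(2)$ on $\twoframes{n}$, diagonal $\SO{I\Hh}$ on $\polspace{(2)}$, linked by the surjection $U(2)\to \SO{I\Hh}$ with scalar kernel) is exactly the natural one, and the verifications of commutativity, triviality of the $\Delta$-action, invariance, and transitivity on fibers are all correct.

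One small imprecision in the freeness argument: for a proper nondegenerate polygon the vectors $\varphi(q^\ell)$ need only span a $2$-plane of $I\Hh$ (the polygon may well be planar), not all of $I\Hh$. This does not affect the conclusion, since an element of $\SO{I\Hh}\isomorphic\SO{3}$ fixing a plane pointwise must also fix the orthogonal line (determinant $1$), hence is the identity; but the justification should go through the plane rather than the full span. More importantly, your observation about degenerate proper polygons is correct and genuinely significant, not a side worry: one can write down explicit $(\bz,\bw)$ over a lined polygon for which a diagonal $t\in T_{U(n)}$ with two distinct entry-blocks satisfies $t\cdot(\bz,\bw)=(\bz,\bw)P$ with $P$ a nonscalar diagonal in $U(2)$, so the $(T_{U(n)}/\Delta)$-stabilizer there is a nontrivial circle and the principal bundle property fails. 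In Hausmann and Knutson's original formulation the base is the nondegenerate locus (their ``starred'' polygon space), a restriction which the quoted statement in the paper suppresses; your two proposed fixes --- restricting to the nondegenerate stratum, or reformulating as an orbispace-principal bundle in the sense of \S\ref{s:extension_non-generic} --- are exactly what is needed to make the statement literally correct.
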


Actually, the action of $T_{U(n)}$ on $\twograss{n}$ is Hamiltonian, with momentum map $\momentummap_{T_{U(n)}} : \twograss{n} \rightarrow \R^n$ given by:
\begin{align*}
\momentummap_{T_{U(n)}}(\class{\bz, \bw}) &= \left( \frac{\abs{z^1}^2 + \abs{w^1}^2}{2}, \dots, \frac{\abs{z^n}^2 + \abs{w^n}^2}{2} \right) \\
&= \frac{1}{2}(\norm{\varphi(z^1 + jw^1)}, \dots, \norm{\varphi(z^n + j w^n)}).
\end{align*}
It follows that, for any $\br \in (\R_{>0})^n$, the application $\Phi$ maps $\momentummap_{T_{U(n)}}^{-1}(\frac{1}{2}\br)$ onto $\confspace{\br}$. 
Identifying $\twograss{n}$ with a (co)adjoint orbit, we obtain a canonical symplectic structure on $\twograss{n}$ an the above result rephrases as:
\begin{proposition}[Nohara, Ueda {\cite[Proposition 2.2]{nohara2014toric}}]
	\label{p:confspace_is_symplectic_reduction_of_grassmannians}
	The moduli space $\confspace{\br}$ of polygons with side length $\br$ in $\R^3 \isomorphic I\Hh$ is isomorphic to the symplectic reduction of $\twograss{n}$ by the $T_{U(n)}$-action at the value $\frac{1}{2}\br$.
\end{proposition}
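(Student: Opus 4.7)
The plan is to invoke symplectic reduction in stages, realising both $\twograss{n}$ and the Kapovich--Millson pair $(\confspace{\br}, \omega)$ as successive Marsden--Weinstein quotients of a single ambient symplectic manifold by two commuting Hamiltonian actions. The ambient space will be $\Hh^n \isomorphic \C^{2n}$ endowed with its standard symplectic form. On this space the torus $T_{U(n)}$ acts diagonally on each quaternionic factor (via the identification $\Hh = \C \oplus j\C$), with momentum map
\[ \momentummap_{T_{U(n)}}(\bz, \bw) = \tfrac{1}{2}\bigl(\abs{z^\ell}^2 + \abs{w^\ell}^2\bigr)_{\ell=1}^n, \]
while $U(2)$ acts on the right through the embedding $\eta$ introduced just before the proposition. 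These two actions commute and their momentum maps are mutually invariant.

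First I would establish the diffeomorphism part of the claim. On the level set $\momentummap_{T_{U(n)}}^{-1}(\br/2)$ inside $\twograss{n}$, each coordinate satisfies $\abs{z^\ell}^2 + \abs{w^\ell}^2 = r_\ell > 0$, so the image under $\Phi$ of any such class is a proper polygon class with side lengths $\br$; conversely, the description of $\tilde{\Phi}$ in terms of $\varphi$ shows that every element of $\confspace{\br}$ lifts to a $2$-frame in this level set. Proposition~\ref{p:Phi_is_principal_bundle} asserts that $\Phi^\proper$ is a principal $(T_{U(n)}/\Delta)$-bundle; since $\Delta$ acts trivially on $\twograss{n}$, the $T_{U(n)}$-orbits coincide with the $(T_{U(n)}/\Delta)$-orbits, and thus the fibres of $\Phi$ over $\confspace{\br}$ are precisely the $T_{U(n)}$-orbits in the level set. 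This yields a canonical diffeomorphism between $\momentummap_{T_{U(n)}}^{-1}(\br/2)/T_{U(n)}$ and $\confspace{\br}$.

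To match the symplectic structures, I would reduce $\Hh^n$ in the opposite order. Reducing first by $T_{U(n)}$ at $\br/2$, the level set is the product of $3$-spheres $\prod_\ell \sphere{3}_{\sqrt{r_\ell}}$, and the $T_{U(n)}$-quotient realises factor-by-factor the Hopf-type fibration $\varphi \colon \sphere{3}_{\sqrt{r_\ell}} \to \sphere{2}_{r_\ell}$. The reduced symplectic manifold is therefore $\bigl((\sphere{2})^n, \sum_\ell r_\ell \omega_\ell\bigr)$, which is exactly the ambient space used to define $\polspace{\br}$ and $\omega$. The residual action of $U(2)/\Delta \isomorphic \SO{I\Hh}$ on this product, arising from the relation $\varphi(q \cdot P) = P^{-1}\varphi(q)P$, is the diagonal rotation appearing in the definition of $\polspace{\br}$, and its momentum map is the closing vector $\sum_\ell r_\ell u^\ell$. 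Reducing further at $0$ gives $\polspace{\br}/\SO{3} = \confspace{\br}$ endowed with the Kapovich--Millson symplectic form. By commutativity of reduction in stages for commuting Hamiltonian actions (applied on the proper locus, where both residual actions are free), this symplectic manifold agrees with the reduction of $\twograss{n}$ by $T_{U(n)}$ at $\br/2$.

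The main technical point is the bookkeeping across the identifications $\Hh = \C^2 = I\Hh \oplus \R$ to verify that the $U(2)/\Delta$-action surviving the $T_{U(n)}$-reduction is precisely the $\SO{I\Hh}$-action used in the Kapovich--Millson construction, and that the normalisation of the symplectic form on each $\sphere{2}_{r_\ell}$ coming out of the first reduction matches the weighted area form $r_\ell \omega_\ell$. Once these identifications are in place, the standard reduction-in-stages theorem produces the claimed isomorphism of symplectic manifolds, not merely of smooth manifolds.
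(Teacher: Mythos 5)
The paper does not prove this proposition: it is quoted from Nohara--Ueda, whose argument rests on exactly the Hausmann--Knutson reduction-in-stages picture that you are reconstructing, so there is no internal proof to compare against beyond the surrounding set-up (the momentum map of $T_{U(n)}$ and Proposition~\ref{p:Phi_is_principal_bundle}). Your outline is therefore correct, and the ``bookkeeping'' you flag at the end is precisely where the content lives; three points should be written out. First, that the right $U(2)$-reduction of $\Hh^n \isomorphic \C^{2n}$ at the central level $Z^\ast Z = I_2$ yields $\twograss{n}$ equipped with the coadjoint-orbit symplectic form (the one through $\mathrm{diag}(\tfrac12,\tfrac12,0,\dots,0)$), since this is the structure the proposition compares against. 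Second, that the $U(1)$-reduction of $\C^2$ at level $r_\ell/2$ gives $(\sphere{2}, r_\ell\omega_\ell)$ with the paper's normalisation of $\omega_\ell$ and not some other multiple. Third, that the residual action on $(\sphere{2})^n$ after the $T_{U(n)}$-reduction is the effective group $U(2)/\Delta$: this is because, as subgroups of the unitary group of $\C^{2n}$, the centre $\Delta$ of $U(2)$ coincides with the diagonal $\sphere{1}\subset T_{U(n)}$ already quotiented away, and the equivariance $\varphi(q\cdot P)=P^{-1}\varphi(q)P$ then identifies $U(2)/\Delta$ with the diagonal $\SO{I\Hh}$-action used to define $\polspace{\br}$ and $\confspace{\br}$. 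Finally, your appeal to freeness of the residual actions restricts the argument to generic $\br$; for non-generic $\br$ both reduced spaces are orbispaces and the identification should be phrased via singular reduction, consistent with \S\ref{s:extension_non-generic}.
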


\subsection{Completely integrable systems on $\twograss{n}$}

Let us recall here how Nohara and Ueda \cite{nohara2014toric} defined a family of completely integrable systems on $\twograss{n}$, one for each maximal family of disjoint diagonals in the planar convex regular polygon with $n$ sides, that generalizes systems of bending flows on $\confspace{\br}$.

Given a subset $I$ of $\lbrace 1, \dots, n \rbrace$, define a subgroup $U_I$ of $U(n)$ as the set of matrices $A = (a_{i,j})_{1 \leq i,j \leq n} \in U(n)$ such that
\[ (a_{i,j})_{i,j \in I} \in U(\card I) \quad \text{and} \quad a_{i,j} = \delta_{i,j} \text{ for } (i,j) \notin I \times I. \]

To a formal side $q^i$ of some polygon $\bq$ we associate the subgroup 
\[ U_{q^i} = U_{\lbrace i \rbrace} = \begin{pmatrix}
	I_{i-1} & 0 & 0 \\
	0 & U(1) & 0 \\
	0 & 0 & I_{n-i}
\end{pmatrix}, \]
where $I_k$ denotes the identity matrix of size $k \times k$. The momentum map $\psi_{q^i} : \twograss{n} \rightarrow \R$ of the action of $U_{q^i}$ on $\twograss{n}$ is defined by
\[ \psi_{q^i}(\class{\bz, \bw}) = \frac{\abs{z^i}^2 + \abs{w^i}^2}{2}. \]

More generally, to some diagonal $d = \sum_{i \in I} q^i$ we associate the subgroup $U_{d} = U_I$. Its momentum map $\momentummap_{U_d} : \twograss{n} \rightarrow \sqrt{-1} \mathfrak{u}(\card I)$ is given by 
\[ \momentummap_{U_d}(\class{\bz, \bw}) = \left( \frac{z^i \bar{z}^j + w^i \bar{w}^j}{2} \right)_{i,j \in I}. \]
The matrix $\momentummap_{U_d}(\class{\bz, \bw})$ has rank two and real eigenvalues, denote by 
\[ \lambda_{d,1}(\class{\bz, \bw}) \geq \lambda_{d,2}(\class{\bz, \bw}) \geq 0 \]
its first two eigenvalues. We will restrict our attention to the second one and define the second-eigenvalue fonction $\psi_d = \lambda_{d,2} : \twograss{n} \rightarrow \R$.

\begin{proposition}[Nohara, Ueda {\cite[Proposition 4.5]{nohara2014toric}}]
	Let $d_1, \dots, d_{n-3}$ be a maximal family of disjoint diagonals. Then 
	\[ \lbrace \psi_{q^1}, \dots, \psi_{q^n}, \lambda_{d_1,1}, \dots, \lambda_{d_{n-3},1}, \lambda_{d_1,2}, \dots, \lambda_{d_{n-3},2} \rbrace \]
	is a family of Poisson commutative functions on $\twograss{n}$.
\end{proposition}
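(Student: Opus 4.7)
\emph{Plan.} The strategy is to verify pairwise Poisson commutativity of the listed functions by sorting them into three kinds of brackets and invoking, in each case, a standard moment-map principle: (a) components of the moment map of an abelian action Poisson commute; (b) two functions on $M$ obtained by pulling back $\mathrm{Ad}^\ast$-invariant (Casimir) functions via a single equivariant moment map $\momentummap : M \to \mathfrak{g}^\ast$ Poisson commute; and (c) the moment maps of two commuting subgroups Poisson commute componentwise.

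First I would dispose of $\{\psi_{q^i}, \psi_{q^j}\}$: the $\psi_{q^i}$ are the components of the moment map $\momentummap_{T_{U(n)}}$ of the abelian torus $T_{U(n)}$ acting on $\twograss{n}$, so these brackets vanish by principle~(a). Next, I would observe that each $\lambda_{d_k,\alpha}$ ($\alpha = 1,2$) is the pullback by the equivariant moment map $\momentummap_{U_{d_k}} : \twograss{n} \to \sqrt{-1}\,\mathfrak{u}(\card I_k)$ of an eigenvalue function of a Hermitian matrix, which is $U(\card I_k)$-invariant and therefore a Casimir on $\mathfrak{u}(\card I_k)^\ast$. Principle~(b) then yields at once $\{\lambda_{d_k,1}, \lambda_{d_k,2}\} = 0$ and the $U_{d_k}$-invariance of $\lambda_{d_k,\alpha}$ as a function on $\twograss{n}$.

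For the mixed brackets $\{\psi_{q^i}, \lambda_{d_k,\alpha}\}$, there are two subcases. If $i \in I_k$, then $U_{q^i}$ is a subgroup of $U_{d_k}$, so the $U_{d_k}$-invariance of $\lambda_{d_k,\alpha}$ specializes to $U_{q^i}$-invariance; Noether's theorem (equivalently, the identity $\{\psi_{q^i}, H\} = 0$ for any $U_{q^i}$-invariant $H$) gives the vanishing. If $i \notin I_k$, then $U_{q^i}$ and $U_{d_k}$ act through disjoint diagonal blocks of $U(n)$ and thus commute as subgroups; by~(c), $\psi_{q^i}$ Poisson commutes with every matrix entry of $\momentummap_{U_{d_k}}$, hence also with any function of its spectrum.

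Finally, for the cross brackets $\{\lambda_{d_k,\alpha}, \lambda_{d_\ell,\beta}\}$ with $k \ne \ell$, the disjointness hypothesis on the diagonals is what makes the argument work. The index sets $I_k$ and $I_\ell$ are intervals of consecutive integers (mod $n$), and the non-crossing condition excludes exactly the interlocking pattern $i < p < j < q$; therefore either $I_k$ and $I_\ell$ are disjoint, in which case $U_{d_k}$ and $U_{d_\ell}$ are commuting subgroups of $U(n)$ and~(c) applies, or one is contained in the other, say $I_k \subset I_\ell$, in which case $U_{d_k} \subset U_{d_\ell}$ and the $U_{d_k}$-invariance of $\lambda_{d_\ell,\beta}$ reduces the matter to the Noether step used in the mixed case. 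The main obstacle is precisely this combinatorial dichotomy, namely checking that disjoint diagonals correspond to nested-or-disjoint index intervals; once this is in place, the remainder is a routine application of standard moment-map Poisson identities. A minor technical caveat is that the eigenvalues $\lambda_{d_k,\alpha}$ are smooth only where they are simple and nonzero, but at such points the above computations are valid, and the identity of brackets extends by continuity elsewhere.
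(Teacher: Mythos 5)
This proposition is quoted in the paper from Nohara--Ueda without proof, so there is no internal argument to compare against. Your proof is correct and is the standard collective-Hamiltonian argument: the components of an abelian moment map Poisson commute; eigenvalue functions are $\mathrm{Ad}^\ast$-invariant Casimirs, so their pullbacks along a single equivariant moment map Poisson commute with each other and, by Noether, with the moment map of any subgroup; and commuting subgroups have componentwise Poisson-commuting moment maps. The genuinely problem-specific step is the combinatorial dichotomy, namely that non-crossing diagonals in a convex $n$-gon, normalized so each index set $I_k$ is a contiguous interval in $\{1, \dots, n\}$, always yield nested-or-disjoint $I_k$; you identify and use this correctly, and your caveat about smoothness of the eigenvalue functions away from crossings is appropriate.
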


Of course, this family of $3n - 6$ functions is too large to define a completely integrable system on the $(2n-4)$-dimensional manifold $\twograss{n}$. Actually, each adapted face of a polygon induces a linear dependence of some of these functions. Indeed, denote by $v_1, v_2, v_3$ the sides of an adapted face, where $v_i$ can be either a side $q^i$ or one of the chosen diagonals $d_\alpha$. There is a simple linear dependence between them, say $v_3 = v_1 + v_2$. It follows that $U_{v_1} \times U_{v_2}$ is a subgroup of $U_{v_3}$ and the respective momentum maps of these three groups satisfy:
\[ \momentummap_{U_{v_3}} = \begin{pmatrix}
	\momentummap_{U_{v_1}} & \ast \\
	\ast & \momentummap_{U_{v_2}}
\end{pmatrix} \]
Comparing the traces between these two matrices gives a linear relation in the above family.

However, getting rid of the redundant information we obtain a completely integrable system:
\begin{proposition}[Nohara, Ueda {\cite[Proposition 4.6]{nohara2014toric}}]
	\label{p:system_grassmaniann_induces_system_bending}
	The map
	\[ \Psi = (\Psi_d, \Psi_q) = (\psi_{d_1}, \dots, \psi_{d_{n-3}}, \psi_{q^1}, \dots, \psi_{q^{n-1}}) \]
	defines a completely integrable system on $\twograss{n}$. Its $n-3$ first components induce  via $\Phi : \twograss{n} \rightarrow \polspace{(2)}$ the systems of bending flows on $\confspace{\br}$ associated to the diagonals $d_1, \dots, d_{n-3}$ (up to sign and additive constant).
	
	More precisely, for any diagonal $d_\alpha = \sum_{i \in I_\alpha} q^i$, for any $\br \in (\R_{\geq 0})^n$ such that $\abs{\br} = 2$, and for any $\class{\bz, \bw} \in \Psi_q^{-1}(\frac{1}{2}r_1, \dots, \frac{1}{2}r_{n-1})$,
	\begin{equation}
		\label{eq:relation_Psi_and_F}
		4\psi_{d_{\alpha}}(\class{\bz, \bw}) = - f_\alpha \circ \Phi(\class{\bz, \bw}) + \sum_{i \in I_\alpha} r_i,
	\end{equation}
	where $f_\alpha(\class{\bq}) =\norm{ \sum_{i \in I_\alpha} q_i}$ maps a (class of) polygon $\class{\bq} \in \confspace{\br}$ to the length of its diagonal $d_\alpha$.
\end{proposition}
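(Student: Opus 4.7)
The plan is to first establish the explicit formula \eqref{eq:relation_Psi_and_F} by a direct eigenvalue computation, and then deduce complete integrability by combining it with the symplectic reduction picture of Proposition~\ref{p:confspace_is_symplectic_reduction_of_grassmannians} and the known integrability of the bending flows on $\confspace{\br}$.

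For the formula, fix a representative $(\bz, \bw) \in \twoframes{n}$ of a class, and write $\bz_\alpha = (z^i)_{i \in I_\alpha}$, $\bw_\alpha = (w^i)_{i \in I_\alpha}$. The matrix
\[ \momentummap_{U_{d_\alpha}}(\class{\bz, \bw}) = \tfrac{1}{2}\left( \bz_\alpha \bz_\alpha^\ast + \bw_\alpha \bw_\alpha^\ast \right) \]
has rank at most two, and its nonzero eigenvalues coincide with those of the $2 \times 2$ Gram--type matrix
\[ \tfrac{1}{2}G, \qquad G = \begin{pmatrix} \norm{\bz_\alpha}^2 & \scalarproduct{\bz_\alpha}{\bw_\alpha} \\ \overline{\scalarproduct{\bz_\alpha}{\bw_\alpha}} & \norm{\bw_\alpha}^2 \end{pmatrix}. \]
Using the explicit form $\varphi(z + jw) = i(\abs{z}^2 - \abs{w}^2) + 2 \bar z w \, k$ and summing over $i \in I_\alpha$, a direct computation in $I\Hh$ gives
\[ \norm{d_\alpha}^2 = (\norm{\bz_\alpha}^2 - \norm{\bw_\alpha}^2)^2 + 4 \, |\scalarproduct{\bz_\alpha}{\bw_\alpha}|^2, \]
which is precisely the discriminant $(\trace G)^2 - 4 \det G$ of $G$. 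Hence the two eigenvalues of $\tfrac{1}{2}G$ differ by $\tfrac{1}{2}\norm{d_\alpha}$, and on the prescribed fiber we have $\trace(\tfrac{1}{2}G) = \tfrac{1}{2}\sum_{i \in I_\alpha} r_i$ since $\abs{z^i}^2 + \abs{w^i}^2 = r_i$ for every $i$. Isolating the smaller eigenvalue $\psi_{d_\alpha} = \lambda_{d_\alpha, 2}$ produces exactly~\eqref{eq:relation_Psi_and_F}.

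For the integrability part, Poisson commutativity of the $2n-4$ components of $\Psi$ is immediate from the larger commutative family of the preceding proposition. It remains to check functional independence at a generic point, the count $2n-4 = \tfrac{1}{2}\dim_\R \twograss{n}$ being the correct one. I would invoke the symplectic reduction picture: the $n-1$ components $\psi_{q^1}, \dots, \psi_{q^{n-1}}$ are independent components of the momentum map for the effective $T_{U(n)}/\Delta$-action of Proposition~\ref{p:Phi_is_principal_bundle}, hence functionally independent, and their common level sets are exactly the principal bundle fibers that $\Phi$ sends onto $\confspace{\br}$. Each $\psi_{d_\alpha}$ is manifestly $T_{U(n)}$-invariant, since a diagonal torus element acts on $\momentummap_{U_{d_\alpha}}$ by unitary conjugation and preserves its spectrum; these functions therefore descend through the symplectic quotient to smooth functions on $\confspace{\br}$, which by \eqref{eq:relation_Psi_and_F} agree with $-\tfrac{1}{4}f_\alpha$ plus a constant. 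Since $f_1, \dots, f_{n-3}$ form a completely integrable system on $\confspace{\br}$ by Kapovich--Millson, the $\psi_{d_\alpha}$ supply $n-3$ further independent commuting Hamiltonians on a dense open subset of $\twograss{n}$, completing the independence count.

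The main obstacle is the careful handling of the final independence argument near the locus where $\lambda_{d_\alpha, 1} = \lambda_{d_\alpha, 2}$, i.e.\ where $d_\alpha = 0$: there $\psi_{d_\alpha}$ loses smoothness and one has to restrict to the open dense subset where $d_\alpha \neq 0$ for all $\alpha$. On this set the identification with the bending Hamiltonians via symplectic reduction is clean and the integrability statement is unambiguous; the eigenvalue computation itself is routine but requires keeping close track of quaternionic conventions, the decisive step being the identification of $\norm{d_\alpha}^2$ with the discriminant of~$G$.
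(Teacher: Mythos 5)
The paper itself does not reprove this proposition; it simply cites Nohara and Ueda, so there is no in-paper proof to compare against. Your argument is correct and is essentially the standard one. The eigenvalue calculation is right: since $\momentummap_{U_{d_\alpha}} = \tfrac12 A_\alpha A_\alpha^\ast$ with $A_\alpha$ the $\card(I_\alpha)\times 2$ block of $(\bz,\bw)$ indexed by $I_\alpha$, its nonzero spectrum agrees with that of $\tfrac12 A_\alpha^\ast A_\alpha = \tfrac12 G$; the identity $\norm{d_\alpha}^2 = (\trace G)^2 - 4\det G$ gives $\lambda_{d_\alpha,1}-\lambda_{d_\alpha,2}=\tfrac12\norm{d_\alpha}$, and $\lambda_{d_\alpha,1}+\lambda_{d_\alpha,2}=\tfrac12\trace G = \tfrac12\sum_{i\in I_\alpha}r_i$ on the prescribed $\Psi_q$-level set, from which~\eqref{eq:relation_Psi_and_F} follows by subtraction. (Beware quaternionic conventions: the conjugate factor in the $k$-component of $\varphi(z+jw)$ may be $\bar z w$ or $\overline{zw}$ depending on whether $\Hh = \C\oplus j\C$ or $\C\oplus \C j$ is used, but only the modulus enters, so the discriminant identity is convention-independent as you observe.) Your reduction argument for independence is also the one used in \cite[Lemma 4.7]{nohara2014toric} and paraphrased in \S\ref{s:relation_to_grassmannians} of this paper: the Hamiltonian vector fields of $\psi_{q^1},\dots,\psi_{q^{n-1}}$ span the $T_{U(n)}$-orbit directions, while the $T_{U(n)}$-invariant functions $\psi_{d_\alpha}$ descend to the (unsquared) diagonal-length Hamiltonians on $\confspace{\br}$, which are independent on the dense set where no diagonal vanishes and no adapted face degenerates. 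You correctly flag that one must restrict to the open set where $\lambda_{d_\alpha,1}>\lambda_{d_\alpha,2}$ to keep $\psi_{d_\alpha}$ smooth; the only small point worth making explicit is that the lift of the independent $f_\alpha$'s to the $\Psi_q$-level set remains independent of, and commutes with, the torus momentum components precisely because the level set is coisotropic and the $\psi_{d_\alpha}$ are invariant, so the $2n-4 = \tfrac12\dim\twograss{n}$ count closes.
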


Let $\frac{1}{2} \brdot = \frac{1}{2} (r_1, \dots, r_{n-1}) \in \R^{n-1}$ be a value of $\Psi_q$. Suppose $\brdot$ satisfies
\begin{equation}
\label{eq:condition_r_proper}
r_1 > 0, \dots, r_{n-1} > 0 \text{ and } r_n := 2 - r_1 - \cdots - r_{n-1} > 0.
\end{equation}
Then $\Psi_q^{-1}(\frac{1}{2} \brdot)$ is exactly $\Phi^{-1}(\confspace{\br})$, the preimage by $\Phi$ of the moduli space of polygons with side lengths fixed to $\br = (r_1, \dots, r_n)$.

Fix now a value $\bc = (c_1, \dots, c_{n-3}) \in \R^{n-3}$ of $\Psi_d$. Then $\class{\bz, \bw}$ lies in $\Psi^{-1}(\bc, \frac{1}{2} \brdot) = \Psi_d^{-1}(\bc) \cap \Psi_q^{-1}(\frac{1}{2} \brdot)$ if and only if $\Phi(\class{\bz, \bw})$ lies in the fiber
\[ N = F^{-1}(c'_1, \dots, c'_{n-3}) \]
of the system of bending flows on $\confspace{\br}$, where each $c'_i$ is an affine transform (depending on $\br$) of $c_i$ that can be explicitly computed from Formula~\ref{eq:relation_Psi_and_F}. It follows that $\Psi^{-1}(\bc, \frac{1}{2} \brdot)$ is exactly $\Phi^{-1}(N)$, the preimage by $\Phi$ of this fiber.

Note that when $\brdot$ satisfies Condition~\ref{eq:condition_r_proper}, we have
\[ N \subset \confspace{\br} \subset \confspace{(2)}^\proper. \]
Hence each preimage above can be seen as a preimage by $\Phi^\proper$, for which we have the nice Proposition~\ref{p:Phi_is_principal_bundle}.

\subsection{Singular fibers of the systems on $\twograss{n}$}

Fix $n \geq 4$ and $d_1, \dots, d_{n-3}$ a choice of disjoint diagonals in an arbitrary planar convex $n$-gon. Consider the associated system $\Psi : \twograss{n} \rightarrow \R^{2n-4}$.

In this subsection, we give some facts that might suggest that the method we used in this paper to study the singular fibers of the system of bending flows on $\confspace{\br}$ could be applied as well to the system $\Psi$ on $\twograss{n}$.

\subsubsection*{Singular points} Let us follow the proof of \cite[Lemma 4.7]{nohara2014toric}. Remark that 
\[ \momentummap_{T_{U(n)}} = \frac{1}{2}(\psi_{q^1}, \dots, \psi_{q^n}), \]
hence the Hamiltonian vectors fields of the functions $\psi_{q^1}, \dots, \psi_{q^{n-1}}$ are linearly independent and span the $T_{U(n)}$-orbits. The maps $\psi_{d_1}, \dots, \psi_{d_{n-3}}$ corresponds to the unit bending vector fields $B_{d_1}, \dots, B_{d_{n-3}}$ under the identification provided by Proposition~\ref{p:confspace_is_symplectic_reduction_of_grassmannians}. It follows that if $\Phi(\class{\bz, \bw})$ is a regular point of the system on $\confspace{\br}$ (with $\br = (\psi_{q^1}(\class{\bz, \bw}), \dots, \psi_{q^n}(\class{\bz, \bw})$), then the Hamiltonian vector fields associated to $\psi_{d_1}, \dots, \psi_{d_{n-3}}$ at $\class{\bw, \bz}$ are linearly independent and transverse to the $T_{U(n)}$-fibers. It follows that $\class{\bz, \bw}$ is a regular value of the system $\Psi$ on $\twograss{n}$. This holds even for non generic $\br$ since we can work on the dense manifold of non lined polygons.

Conversely, if $\Phi(\class{\bz, \bw})$ is a singular point of the system on $\confspace{\br}$, then $\class{\bz, \bw}$ is a singular point of the system on $\twograss{n}$.

\subsubsection*{Lifting property}
In \S \ref{s:structure_of_singular_fibers}, we chose not to work with the maps $F$ and $f_1, \dots, f_{n-3}$ on $\confspace{\br}$, but rather with their lifts $\tilde{F}, \tilde{f_1}, \dots, \tilde{f}_{n-3}$ on $\polspace{\br}$. It is interesting to note that the same can be done with the system $\Psi$.
Namely, the functions $\psi_{q}, \lambda_{d, j} : \twograss{n} \rightarrow \R$ involved in the definition of $\Psi$ admit natural lifts
\[ \tilde{\psi}_{q}, \tilde{\lambda}_{d, j} : \twoframes{n} \rightarrow \R \]
with explicit expressions.

\subsubsection*{Decomposition into simpler fibers} An important step in \S \ref{s:structure_of_singular_fibers} is to notice that it suffices to work with prodigal fibers, because any non-prodigal fiber $\tilde{N}$ is isomorphic to a product $\tilde{N}_1 \times \cdots \tilde{N}_k$ of prodigal fibers of ``smaller'' systems. The same holds for a system on $\twograss{n}$.

Suppose that the value $(\bc, \brdot) \in (\R_{\geq 0})^{2n-4}$ is such that some $c'_\ell = 0$. That is to say, polygons $\tilde{\Phi}(\bz, \bw)$ satisfy (up to a cyclic permutation of the indices)
\begin{equation}
	\label{eq:polygon_Phi_closes_prematurely}
	\varphi(z^1 + j w^1) + \cdots + \varphi(z^k + j w^k) = 0
\end{equation}
for $k < n$ when $(\bz, \bw)$ lies in $\tilde{L} = \tilde{\Psi}^{-1}(\bc, \brdot)$. Set
\[ \begin{matrix}
	\bz_1 = (z^1, \dots, z^k), & \bz_2 = (z^{k+1}, \dots, z^n), \\
	\bw_1 = (z^1, \dots, z^k), & \bw_2 = (z^{k+1}, \dots, z^n).
\end{matrix} \]
It is immediate to check that Condition~\ref{eq:polygon_Phi_closes_prematurely} implies
\[ \alpha_1 (\bz_1, \bw_1) \in \twoframes{k} \quad \text{and} \quad \alpha_2 (\bz_2, \bw_2) \in \twoframes{n-k}, \]
where $\alpha_1, \alpha_2$ are two positive constants used to normalize: $\norm{\alpha_j \bz_j} = \norm{\alpha_j \bw_j} = 1$.
The function $\tilde{\psi}_{q^i}$ depends only on $z_i$ and $w_i$, so in particular it depends solely on either $(\bz_1, \bw_1)$ or $(\bz_2, \bw_2)$. Similarly, let $d = \sum_{i \in I} q_i$ be a diagonal. If $d$ is disjoint from the vanishing diagonal $d_\ell$, then either $I \subset I_1 = I_\alpha = \lbrace 1, \dots, k \rbrace$ or $I \subset I_2 = I_\alpha^\complement = \lbrace k+1, \dots, n \rbrace$ (up to replacing $I$ by its complement $I^\complement$, which geometrically doesn't change the diagonal $d$). It follows that $\tilde{f}$ depends only on $\lbrace q_i, i \in I_j \rbrace$ and the sum
\[ \sum_{i \in I_\alpha} r_i \]
can be expressed using only the components of $\br^j = (r_i)_{i \in I_j}$. By Formula~\ref{eq:relation_Psi_and_F}, $\tilde{\psi}_d$ then depends only on $(\bz_j, \bw_j)$.

The map $\tilde{\Psi}$ can then be split into two maps $\tilde{\Psi}_1 : \twoframes{k} \rightarrow \R^{2k - 4}$ and $\tilde{\Psi}_2 : \twoframes{n-k} \rightarrow \R^{2n - 2k - 4}$ such that $\tilde{\Psi}_j$ depends only on $(\bz_j, \bw_j)$, and
\[ \tilde{L} = \Psi^{-1}(\bc, \brdot) \isomorphic \Psi_1^{-1}(\bc^1, \brdot^1) \times \Psi_2^{-1}(\bc^2, \brdot^2) = \tilde{L_1} \times \tilde{L_2}. \]
Iterating the process, we can restrict the study to products of ``prodigal'' fibers and possible particular sets (typically $\twoframes{1}$ and $\twoframes{2}$, analogous to digons and triangles appearing in the case of polygons).

\medskip

A similar reduction might be possible when $\brdot$ has some component $r_\ell$ equal to zero. Indeed, the set of two frames $(\bz, \bw)$ in $\C^n$ satisfying $\psi_{q^\ell} = 0$ is naturally identified with the set of two frames in the hyperplane $\lbrace e_\ell = 0 \rbrace \isomorphic \C^{n-1}$. Formula~\ref{eq:relation_Psi_and_F} shows that suppressing $q_\ell$ and $r_\ell$ in the expression of any $\tilde{\psi}_d : \twoframes{n} \rightarrow \R$, one obtains the expression of some $\tilde{\psi}_{d'} : \twoframes{n - 1} \rightarrow \R$. The fiber $\tilde{L} = \Psi^{-1}(\bc, \br)$ can then be identified with the fiber $\tilde{L}'$ of some system $\tilde{\Psi}'$ on $\twoframes{n-1}$ obtained by removing $\tilde{\psi}_{q^\ell}$ and a redundant $\tilde{\psi}_d$.

\subsubsection*{Study of ``prodigal'' fibers} Suppose
\[ a,b,c \in \lbrace q^1, \dots, q^{n-1}, d_1, \dots, d_{n-3} \rbrace \]
are the sides of an adapted face $\Delta$ for the choice of diagonals $d_1, \dots, d_{n-3}$. Let $\tilde{L}$ be a singular fiber of $\Psi$ such that a nontrivial linear relation 
\begin{equation}
	\label{eq:degenerate_face}
	\alpha a + \beta b + \gamma c = 0
\end{equation}
holds in the polygon $\Phi(\class{\bz, \bw})$ when $\class{\bz, \bw} \in \tilde{L}$. Then we decompose a $2$-frame $(\bz, \bw)$ in $\tilde{L}$ into three smaller $2$-frames as we did for prodigal polygons in \S \ref{s:structure_of_singular_fibers}, as follows. To a side of $\Delta$, say $a$, we associate:
\[ (\bz_a, \bw_a) = \alpha_a \begin{pmatrix}
	\\
	(z^i, w^i)_{i \in I_a}\\
	\\
	\hline
	(z^a,  w^a)
\end{pmatrix}, \]
where $z^a, w^a$ in $\C^n$ and $\alpha_a > 0$ are (uniquely) chosen such that $(\bz_a, \bw_a)$ is a $2$-frame in $\C^{n_a}$, $n_a = \abs{I_a} + 1$. The non-crossing condition on the diagonals ensures that for each diagonal (or side) $\sum_I q^i$, the action of $U_I \subset U(n)$ on $\twoframes{n}$ induces naturally an action of $U'_I \subset U(n_a)$ on $\twoframes{n_a}$, with $U_I \isomorphic U'_I \isomorphic U(\abs{I})$. It also guarantees that the system $\tilde{\Psi}$ on $\twoframes{n}$ induces a system $\tilde{\Psi}_a$ on $\twoframes{n_a}$ such that the fiber $\tilde{L}$ is mapped onto a fiber $\tilde{L}_a$. More precisely, $\tilde{L}$ is isomorphic to a submanifold of $\tilde{L}_a \times \tilde{L}_b \times \tilde{L}_c$ characterized by Relation~\ref{eq:degenerate_face} (similarly to~\ref{eq:image_of_cutting}). The remaining question is then the existence of a result similar to Proposition~\ref{p:structure_prodigal_fiber}.

\subsubsection*{Isotropicness of the fibers}
Assuming the singular fibers of the system on $\twograss{n}$ are submanifolds, it is reasonable to expect that a vector tangent to a fiber can be approximated by vectors on neighboring fibers. More precisely, a vector $X$ tangent to a fiber $\tilde{N} = \tilde{\Psi}(\bc, \brdot)$ should be approximable by a sequence $(X_t)_{t>0}$ such that $X_t$ is tangent to a fiber $\tilde{N}_t = \tilde{\Psi}(\bc_t, \brdot_t)$, where $\bc_t \to \bc$ is chosen such that $N_t$ is ``less singular'' than $N$ (e.g. it provides polygons with a lower number of degenerate faces) and $\brdot_t \to \brdot$ is chosen such that it defines generic positive side lengths $\br_t$ for any $t > 0$. The isotropicness would follow by continuity, as in \S \ref{s:isotropicness_of_fibers}.

\subsection{Relation to Gel'fand--Cetlin}

Define the sequence of inclusions
$K_1 \subset \cdots \subset K_n = U(n)$
where $K_i$ is the group of matrices of the form
\[ \begin{pmatrix}
	A & 0 \\
	0 & T
\end{pmatrix} \]
with $A \in U(i)$, $T=\mathrm{diag}(\xi_1, \dots, \xi_{n-i})$, $\xi_1, \dots, \xi_{n-i} \in U(1)$. The dual of the Lie algebra $\mathfrak{k}_i$ of $K_i$ can be identified with the set of matrices of the form
\[ \begin{pmatrix}
X & 0 \\
0 & B
\end{pmatrix} \]
with $X$ an Hermitian $i \times i$ matrix, $B = \mathrm{diag}(\theta_1, \dots, \theta_{n-i})$, $\theta_1, \dots, \theta_{n-i} \in \R$. Under a similar identification, the coadjoint orbit of $U(n)$ through a Hermitian matrix $A$ is the set of all Hermitian matrices with same spectrum as $A$. In other words, a coadjoint orbit $\orbit{\blambda}$ is uniquely determined by a $n$-tuple $\blambda = (\lambda_1, \dots, \lambda_n) \in \R$ of fixed eigenvalues.

Given a matrix $M$ in some coadjoint orbit $\orbit{\blambda}$, denote by $M_k$ the upper-left submatrix of size $k \times k$ of $M$. The matrix $M_k$ as eigenvalues
\[ \mu_1^{k}(M) \geq \mu_2^{k}(M) \geq \cdots \geq \mu_k^{k}(M). \]
\begin{figure}
	\centering
	\begin{tikzpicture}[y=-1cm, x=0.8cm]
	\newcommand{\nodet}[2]{\node at (#1) {$#2$};}
	\newcommand{\nodel}[2]{\node at (#1, 0) {$\lambda_{#2}$};}
	\newcommand{\nodem}[3]{\node at (#1) {$\mu_{#3}^{#2}$};}
	\newcommand{\nodef}[1]{\node[rotate=-45] at (#1) {$\geq$};}
	\newcommand{\noder}[1]{\node[rotate=45] at (#1) {$\geq$};}
	
	\nodel{0}{1} \nodet{1,0}{\geq} \nodel{2}{2} \nodet{3,0}{\geq} \nodel{4}{3} \nodet{5,0}{\geq} \nodet{6,0}{\cdots}	\nodet{7,0}{\geq} \nodel{8}{n-1} \nodet{9,0}{\geq} \nodel{10}{n}
	
	\nodem{1,1}{n-1}{1} \nodem{3,1}{n-1}{2} \nodem{5,1}{n-1}{3} \nodet{7,1}{\cdots} \nodem{9,1}{n-1}{n-1}
	\nodef{0.5,0.5} \noder{1.5,0.5} \nodef{2.5,0.5} \noder{3.5,0.5} \nodef{4.5,0.5} \noder{5.5,0.5} \nodef{8.5,0.5} \noder{9.5,0.5} 
	\nodem{2,2}{n-2}{1} \nodem{4,2}{n-2}{2} \nodet{6,2}{\cdots} \nodem{8,2}{n-2}{n-2}
	\nodef{1.5,1.5} \noder{2.5,1.5} \nodef{3.5,1.5} \noder{4.5,1.5} \nodef{7.5,1.5} \noder{8.5,1.5}
	\node[rotate=-45] at (3, 3) {$\cdots$}; \node[rotate=45] at (7, 3) {$\cdots$};
	\nodef{3.5,3.5} \noder{4.5,3.5} \nodef{5.5,3.5} \noder{6.5,3.5}
	\nodem{4,4}{2}{1} \nodem{6,4}{2}{2}
	\nodef{4.5,4.5} \noder{5.5,4.5}
	\nodem{5,5}{1}{1}
	\end{tikzpicture}
	\caption{The Gel'fand--Cetlin diagram}
	\label{f:gelfand_cetlin_diagram}
\end{figure}
The Gel'fand--Cetlin system on $\orbit{\blambda}$ introduced by Guillemin and Sternberg~\cite{guillemin1983gelfand} is the one defined by the functions $\mu_{i}^k$, $1 \leq i \leq k \leq n$. It was orginally defined on generic coadjoint orbits, i.e. for $\blambda$ satisfying
\begin{equation}
	\label{eq:generic_gelfand_orbit_condition}
	\lambda_1 > \lambda_2 > \cdots > \lambda_n,
\end{equation}
but the definition can be extended to non-generic orbits as well. The functions of the Gel'fand--Cetlin system satisfy inequalities summarized in the Gel'fand--Cetlin diagram (Figure~\ref{f:gelfand_cetlin_diagram}). Regular points of this system are the matrices for which all the inequalities in the diagram are strict.

Back to the system on $\twograss{n}$, consider the caterpillar configuration where all the diagonals emanate from the same vertex, say the origin. That is, the family of disjoint diagonals
$\set{d_1, \dots, d_{n-3}}$
is defined by $d_\alpha = q^1 + \cdots + q^{\alpha+1}$. In this case we have a natural inclusion
\[ U_{q^1} \subset U_{d_1} \subset \cdots \subset U_{d_{n-3}} \subset U_{-q^n} \subset U(n) \]
where $U_{-q^n}$ denotes the subgroup associated to $q^1 + \cdots + q^{n-1}$. This induces a similar chain of subalgebras in $\mathfrak{u}(n)$. Proposition~9 of \cite{lane2015convexity} implies that the singular fibers of the system are connected, embedded submanifolds.

Let $M$ be the Hermitian matrix defined by
\[ M = \begin{pmatrix}
		\dfrac{z^i \bar{z}^j + w^i \bar{w}^i}{2}
\end{pmatrix}_{1 \leq i, j \leq n} \]
The upper-left submatrix $M_k$ of size $k$ of $M$ can be obtained as the product $M_k = \frac{1}{2} A_k A_k^\ast$ where
\[ A_k = \begin{pmatrix}
	z^1 & w^1 \\
	\vdots & \vdots \\
	z^k & w^k
\end{pmatrix} \]
is the matrix made of the $k$ first rows of $(\bz, \bw)$. Hence the nonzero eigenvalues of $M_k$ are the same as the nonzero eigenvalues of the $2 \times 2$ matrix $\frac{1}{2} A_k^\ast A_k$. Using this fact, one obtains
\[ \begin{cases}
	\mu_{1}^n(M) = \mu_2^n(M) = \frac{1}{2} & \\
	\mu_{1}^{n-1}(M) = \frac{1}{2} = \sum_{i=1}^{n-1}\tilde{\psi}_{q^i}(\bz, \bw) - \mu_2^{n-1}(M) & \\
	\mu_2^k(M) = \tilde{\psi}_{d_{k-1}}(\bz, \bw) = \sum_{i=1}^{k}\tilde{\psi}_{q^i}(\bz, \bw) - \mu_1^{k}(M) & \text{if } 2 \leq k \leq n-2 \\
	\mu_1^1(M) = \tilde{\psi}_{q^1}(\bz, \bw),\quad \mu_2^1(M) = 0
\end{cases} \]
and $\mu_i^k(M) = 0$ for $i > 2$.
\begin{figure}
	\centering
	\begin{tikzpicture}[y=-1cm, x=0.8cm]
		\newcommand{\nodet}[2]{\node at (#1) {$#2$};}
		\newcommand{\nodel}[2]{\node at (#1, 0) {$0$};}
		\newcommand{\nodem}[3]{\node at (#1) {$\mu_{#3}^{#2}$};}
		\newcommand{\nodef}[1]{\node[rotate=-45] at (#1) {$\geq$};}
		\newcommand{\noder}[1]{\node[rotate=45] at (#1) {$\geq$};}
		\newcommand{\nodefeq}[1]{\node[rotate=-45] at (#1) {$=$};}
		\newcommand{\nodereq}[1]{\node[rotate=45] at (#1) {$=$};}
		
		\nodet{0,0}{\frac{1}{2}} \nodet{1,0}{=} \nodet{2,0}{\frac{1}{2}} \nodet{3,0}{>} \nodel{4}{3} \nodet{5,0}{=} \nodet{6,0}{\cdots}	\nodet{7,0}{=} \nodel{8}{n-1} \nodet{9,0}{=} \nodel{10}{n}
		\nodefeq{0.5,0.5} \nodereq{1.5,0.5} \nodef{2.5,0.5} \noder{3.5,0.5}
		\nodet{1,1}{\frac{1}{2}} \nodem{3,1}{n-1}{2} \nodet{5,1}{0}
		\nodef{1.5,1.5} \noder{2.5,1.5} \nodef{3.5,1.5} \noder{4.5,1.5} %\nodef{7.5,1.5} \noder{8.5,1.5}
		\nodem{2,2}{n-2}{1} \nodet{4,2}{\psi_{d_{n-3}}}
		\node[rotate=-45] at (3, 3) {$\cdots$}; \node[rotate=-45] at (5, 3) {$\cdots$}; \nodet{7, 3}{$0$};
		\nodef{3.5,3.5} \noder{4.5,3.5} \nodef{5.5,3.5} \noder{6.5,3.5}
		\nodem{4,4}{2}{1} \nodet{6,4}{\psi_{d_1}}
		\nodef{4.5,4.5} \noder{5.5,4.5}
		\nodet{5,5}{\psi_{q^1}}
		\end{tikzpicture}
	\caption{The Gel'fand--Cetlin diagram for the caterpillar configuration on $\twograss{n}$}
	\label{f:gelfand_cetlin_diagram_from_polygons}
\end{figure}
In other words, the Gel'fand--Cetlin system on the non-generic orbit $\orbit{\frac{1}{2}, \frac{1}{2}, 0, \dots, 0}$ is isomorphic to the system on $\twograss{n}$. In this case, under the identification $M=(\bz, \bw)$, the Gel'fand--Cetlin diagram becomes as in Figure~\ref{f:gelfand_cetlin_diagram_from_polygons} and the inequalities involved are exactly the triangle inequalities in the adapted faces for the caterpillar configuration, as already noticed by Hausmann and Knutson. More precisely:
\begin{theorem}[Hausmann, Knutson {\cite[Theorem 5.2]{HausKnut97}}]
	The bending flows for the caterpillar configuration on $\confspace{\br}$ are the residual torus action from the Gel'fand--Cetlin system on $\orbit{\frac{1}{2}, \frac{1}{2}, 0, \dots, 0}$.
\end{theorem}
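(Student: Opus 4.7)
The plan is to pass through the well-known symplectomorphism between $\twograss{n}$ and the coadjoint orbit $\orbit{\frac{1}{2}, \frac{1}{2}, 0, \ldots, 0}$ of $U(n)$, identify the Gel'fand--Cetlin functions on that orbit with the functions of the system described earlier in this section, and then recognize the subtorus acting on $\confspace{\br}$ after symplectic reduction as the one generated by the bending Hamiltonians.

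First I would associate to each $2$-frame $(\bz, \bw) \in \twoframes{n}$ the Hermitian matrix $M(\bz, \bw) = \tfrac{1}{2}A A^{\ast}$, where $A$ is the $n \times 2$ matrix with columns $\bz$ and $\bw$. This assignment is invariant under the right $U(2)$-action because $AP(AP)^{\ast} = AA^{\ast}$, and equivariant for the left $U(n)$-action, so it descends to a $U(n)$-equivariant map $\twograss{n} \to \mathfrak{u}(n)^{\ast}$. Since $A^{\ast} A = I_2$, the matrix $2M$ is a rank-two self-adjoint projection whose two nonzero eigenvalues are both equal to $1$, so the image is exactly $\orbit{\frac{1}{2}, \frac{1}{2}, 0, \ldots, 0}$. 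A routine comparison of the Fubini--Study-type form on $\twograss{n}$ with the Kirillov--Kostant--Souriau form on the orbit then promotes this equivariant diffeomorphism to a symplectomorphism.

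Under this identification, the computations already carried out in the excerpt place the Gel'fand--Cetlin functions in direct correspondence with the Hamiltonians of our system on $\twograss{n}$. Indeed, $M_k = \tfrac{1}{2} A_k A_k^{\ast}$ where $A_k$ consists of the first $k$ rows of $A$; the nonzero spectrum of $M_k$ coincides with that of the $2 \times 2$ matrix $\tfrac{1}{2} A_k^{\ast} A_k$. Taking trace and determinant gives $\mu_1^k + \mu_2^k = \sum_{i=1}^{k} \psi_{q^i}$ and, for $2 \leq k \leq n-2$, $\mu_2^k = \psi_{d_{k-1}}$, as summarized in Figure~\ref{f:gelfand_cetlin_diagram_from_polygons}; the remaining $\mu_i^k$ with $i \geq 3$ vanish identically on this orbit. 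Thus the non-constant Gel'fand--Cetlin Hamiltonians are precisely $\psi_{q^1}, \ldots, \psi_{q^{n-1}}$ and $\psi_{d_1}, \ldots, \psi_{d_{n-3}}$, matching exactly the caterpillar-configuration system on $\twograss{n}$. I would then argue that the Hamiltonian torus action generated by the Gel'fand--Cetlin system splits naturally: the subtorus generated by the $\mu_1^k$'s, which are affine combinations of the $\psi_{q^i}$ by the trace relation, coincides with the $(T_{U(n)}/\Delta)$-action, whose moment map was computed to be $\momentummap_{T_{U(n)}} = \tfrac{1}{2}(\psi_{q^1}, \ldots, \psi_{q^n})$. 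Fixing generic side lengths $\br$ amounts to performing the symplectic reduction at $\tfrac{1}{2}\br$, yielding $\confspace{\br}$ by Proposition~\ref{p:confspace_is_symplectic_reduction_of_grassmannians}. The residual torus action on $\confspace{\br}$ is generated by the remaining Gel'fand--Cetlin Hamiltonians $\mu_2^k = \psi_{d_{k-1}}$, and by Proposition~\ref{p:system_grassmaniann_induces_system_bending} these descend, up to an affine change of coordinates depending only on $\br$, to the bending Hamiltonians $f_{\alpha}$ for the caterpillar configuration.

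The hard part, I expect, is the handling of the non-generic orbit $\orbit{\frac{1}{2}, \frac{1}{2}, 0, \ldots, 0}$: the Gel'fand--Cetlin torus action is only globally defined on the open stratum where the interlacing inequalities of Figure~\ref{f:gelfand_cetlin_diagram} are strict. For our specific orbit the top row collapses to $\tfrac{1}{2}$'s and many lower $\mu_i^k$ vanish by construction, so one must check that the surviving strict inequalities $\mu_1^k > \mu_2^k > 0$ for $2 \leq k \leq n-2$ coincide exactly with the non-degeneracy conditions on the adapted triangular faces emanating from the first vertex of the polygon, i.e., with the regularity locus of the bending system. Outside this common regular locus, both the residual Gel'fand--Cetlin action and the bending action only make partial sense, so the identification must be established on the regular locus first and then extended by continuity, using the description of singular points provided by Proposition~\ref{p:description_of_singular_points}.
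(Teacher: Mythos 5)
Your proposal takes essentially the same route as the paper's discussion preceding the theorem (the paper cites the result and gives a computational sketch via the $M=\frac{1}{2}AA^\ast$ identification; you fill in the details). One step is incorrect, though it is a side remark that does not carry logical weight in the argument: you write that ``the subtorus generated by the $\mu_1^k$'s, which are affine combinations of the $\psi_{q^i}$ by the trace relation, coincides with the $(T_{U(n)}/\Delta)$-action.'' The trace relation $\mu_1^k + \mu_2^k = \sum_{i \le k}\psi_{q^i}$ expresses $\mu_1^k$ as $\sum_{i\le k}\psi_{q^i} - \mu_2^k$, so $\mu_1^k$ is not an affine combination of the $\psi_{q^i}$ alone, and the subtorus the $\mu_1^k$ generate is not $T_{U(n)}/\Delta$. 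Conversely, $\psi_{q^k} = (\mu_1^k+\mu_2^k)-(\mu_1^{k-1}+\mu_2^{k-1})$, so the $T_{U(n)}$-momentum map components involve all the $\mu_i^k$'s, not just the first eigenvalues. The part of your argument that actually does the work is fine and is what the paper relies on: one reduces by $T_{U(n)}$ via its known momentum map $\frac{1}{2}(\psi_{q^1},\dots,\psi_{q^n})$ (Proposition~\ref{p:confspace_is_symplectic_reduction_of_grassmannians}), and the functions $\mu_2^k = \psi_{d_{k-1}}$, which Poisson-commute with the $\psi_{q^i}$, descend to affine transforms of the bending Hamiltonians by Proposition~\ref{p:system_grassmaniann_induces_system_bending}; there is no need to identify a complementary $\mu_1^k$-subtorus.
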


\begin{figure}
	\centering
	\begin{tikzpicture}[y=-1cm, x=0.8cm]
		\draw[fill=lightgray, very thick] (0,0) -- (2,0) -- (1,1) -- cycle;
		\draw[fill=lightgray, very thick] (4,0) -- (12,0) -- (8,4) -- cycle;
		
		\draw[dashed] (1,1) -- (6,6) -- (7,5) -- (3,1) -- (2,2);
		\draw[dashed] (3,3) -- (4,2);
		\draw[dashed] (4,4) -- (5,3);
		\draw[dashed] (5,5) -- (6,4);
		
		\foreach \y in {0,...,2} {
			\pgfmathtruncatemacro{\xmax}{2-\y}
			\foreach \x in {0,...,\xmax}{
				\pgfmathtruncatemacro{\X}{6+2*\x+\y}
				\draw[very thick] (\X, \y) -- ++ (-1, 1) -- ++ (2, 0) -- cycle;
%				\draw[very thick] (\X, \y) -- + (1, 1);
			}
		}
		
		\newcommand*{\nodec}[1]{\draw[fill] (#1) circle (2pt);}
		\foreach \y in {0,...,5} {
			\pgfmathtruncatemacro{\xmax}{12 - \y}
			\pgfmathtruncatemacro{\xstep}{\y + 2}
			\foreach \x in {\y,\xstep,...,\xmax}{
				\nodec{\x, \y}
			}
		}
		\nodec{6,6}
		
		\node at (3,2) {$D_{n-3}$};
		\node at (4,3) {$\ldots$};
		\node at (5,4) {$D_{2}$};
		\node at (6,5) {$D_{1}$};
	\end{tikzpicture}
	\caption{Diamonds in the graph $\Gamma_N$}
	\label{f:diamonds}
\end{figure}

Fix positive side lengths $\br = (r_1, \dots, r_n) = (\brdot, r_n) \in (\R_{>0})^n$ and a value $\bc = (c_1, \dots, c_{n-3}) \in \R^{n-3}$. Let $N = f^{-1}(\bc)$ be the corresponding fiber in $\confspace{\br}$, and $L = \Psi^{-1}(\bc, \frac{1}{2}\brdot)$ the corresponding fiber in $\twograss{n}$. 
To the fiber $N$ is associated a graph as follows. The vertices are the functions appearing in the Gel'fand--Cetlin diagram, and there is an edge between two functions if and only if they are constant to the same value on $L$. This graph has the form illustrated in Figure~\ref{f:diamonds}, where a dashed edge correspond to the possible degeneracy of some adapted face of the polygons in $N$. The filled parts are common to all such graphs and can be ignored. Remark that
\[ 4 \mu_i^k = \sum_{j=1}^k r_j + (-1)^{i+1} c_{k-1} \]
along $L$, for any $i=1,2$ and $1 \leq k \leq n-1$ (with the convention $c_0 = r_1$ and $c_{n-2}=r_n$). The condition $\mu = \mu'$ becomes a condition of the form $\alpha_1 + \alpha_2 = \alpha_3$ with $\alpha_i \in \lbrace r_1, \dots, r_n, c_1, \dots, c_{n-3} \rbrace$ that can be explicitly checked (as mentioned above, this condition simply derives from a triangle inequality in an adapted face).

Thus from solely the combinatorics of the graph $\Gamma_N$ we can recover the geometric description of the fiber $N$. Of particular interest are the ``diamonds'' $D_1, \dots, D_{n-3}$. The existence of a diamond-shaped cycle $D_i$ in $\Gamma_N$ implies that the $i$-th diagonal $d_i$ vanishes on $N$, and in this case we know that (at least in the generic case) the fiber $N$ is geometrically the product of two spaces. The correspondence between the combinatorics of those diamonds and the geometry of the fibers was first established by Miranda and Zung \cite{mirandazung} for the classical Gel'fand--Cetlin system on $U(n)$. However in their case the diamond-shaped cycles can have bigger length and can cross each other, if they do then the geometry of the fiber is more subtle and involves cross-products.

\bibliographystyle{amsplain}
\bibliography{biblio}

\end{document}